\newlength\titlebox 
\newcommand\outauthor{
	\begin{tabular}[t]{c}
		\bf\@author
	\end{tabular}}
\providecommand{\keywords}[1]
{\small\textbf{Keywords:} #1
}
\newtheorem{Theorem}{Theorem}
\newtheorem{Def}[Theorem]{Definition}
\newtheorem{Lemma}[Theorem]{Lemma}
\newtheorem{Cor}[Theorem]{Corollary}
\newtheorem{Obs}[Theorem]{Observation}
\newtheorem{Con}[Theorem]{Conjecture}
\newtheorem{Q}[Theorem]{Question}
\theoremstyle{definition}
\newtheorem{Remark}[Theorem]{Remark}
\newtheorem*{acknowledgement}{Acknowledgements}
\newtheorem*{open}{Open access statement}
\DeclareMathOperator{\Lf}{Leaf}
\begin{document}
\title{Recognizing trees from incomplete decks}

\author{
Gabriëlle Zwaneveld\\
University of Amsterdam, The Netherlands\\
\tt{gabriellezwaneveld@gmail.com}\\
}

\maketitle

\small

\begin{abstract}
For a given graph, the unlabeled subgraphs $G-v$ are called
the cards of $G$ and the deck of $G$ is the multiset $\{G-v: v \in V(G)\}$.
Wendy Myrvold [Ars Combinatoria, 1989] showed that a non-connected graph and a connected graph both on $n$ vertices have at most $\lfloor \frac{n}{2} \rfloor +1$ cards in common and she found (infinite) families of trees and non-connected forests for which this upper bound is tight. Bowler, Brown, and Fenner [Journal of Graph Theory, 2010] conjectured that this bound is tight for $n \geq 44$. In this article, we prove this conjecture for sufficiently large $n$. The main result is that a tree $T$ and a unicyclic graph $G$ on $n$ vertices have at most $\lfloor \frac{n}{2} \rfloor+1$ common cards. Combined with Myrvold's work this shows that it can be determined whether a graph on $n$ vertices is a tree from any $\lfloor \frac{n}{2}\rfloor+2$ of its cards.

Based on this theorem, it follows that any forest and non-forest also have at most $\lfloor \frac{n}{2} \rfloor +1$ common cards. Moreover, we have classified all except finitely many pairs for which this bound is strict. Furthermore, the main ideas of the proof for trees are used to show that the girth of a graph on $n$ vertices can be determined based on any $\frac{2n}{3} +1$ of its cards. Lastly, we show that any $\frac{5n}{6} +2$ cards determine whether a graph is bipartite.
\end{abstract}
\keywords{Graph Reconstruction, Deck, Trees, Bounds, Recognizability, Missing Cards}

\section{Introduction}
Let $G=(V, E)$ be a (simple) graph on $n$ vertices. The \textit{card} of $G$ \textit{corresponding to} a vertex $v \in V$ is the graph $G-v$. This is the subgraph of $G$ where the vertex $v$ and all edges incident to $v$ are removed. The \textit{deck} of $G$ is the multiset of the $n$ unlabeled cards $G-v$. Different cards of $G$ can be isomorphic, and  the number of cards in the deck of $G$ isomorphic to a graph $C$ is, by definition, equal to the number of vertices $v \in V(G)$ such that $G-\{v\} \cong C$. 

A natural question is whether the deck of cards determines the graph $G$. A graph $G$ is called \textit{reconstructible} if it is uniquely (up to isomorphism) determined by its deck.
 In 1942 Paul Kelly posed the following famous conjecture:

\begin{Con}[The Reconstruction Conjecture (Kelly \cite{kelly1942isometric,kelly1957congruence}, Ulam \cite{ulam1960collection})]
Any graph having at least 3 vertices is
reconstructible.
\end{Con}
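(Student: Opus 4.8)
The plan is to run the standard reconstruction machinery as far as it will go and then to locate the point where a genuinely new idea is required. First I would prove \emph{Kelly's Lemma}: for every graph $H$ on fewer than $n$ vertices, the number of subgraphs of $G$ isomorphic to $H$ is determined by the deck of $G$. The reason is a double count — each copy of $H$ inside $G$ occupies $|V(H)|$ vertices, hence survives in exactly $n-|V(H)|$ of the cards $G-v$, so the count in $G$ is recovered by summing the corresponding counts over the deck and dividing by $n-|V(H)|$. From this one immediately reads off the number of edges, the degree sequence, the number of copies of every fixed proper subgraph, and, with a little more work, whether $G$ is connected.

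Next I would try to bootstrap from these reconstructible invariants to $G$ itself, splitting into cases. When $G$ is disconnected I would follow Kelly's argument: a largest component $K$ appears as a subgraph of $G$ with a multiplicity that can be extracted from the subgraph counts supplied by Kelly's Lemma, and one strips components off inductively, so disconnected graphs are reconstructible. When $G$ is connected, the hope is that some card $G-v$, together with the degree sequence and edge count of $G$, forces an essentially unique way of reattaching the deleted vertex $v$: this is exactly what happens for trees (a pendant vertex and its neighbour are pinned down), for unicyclic graphs, for regular graphs, and for several other structured families, and each such family can be handled by an ad hoc version of this reattachment step.

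The hard part — and here candour is needed — is precisely the general connected case, and above all the dense case, where every known line of attack stalls: the statement to be proved is the Reconstruction Conjecture of Kelly and Ulam, open since 1942. It is \emph{false} for digraphs, for hypergraphs, and for infinite graphs, so any correct proof must exploit something special about finite simple undirected graphs, and no such mechanism is known. The structural obstruction is that Kelly-style counting is linear: it records only how often each subgraph occurs, not how the occurrences overlap, whereas the global shape of a dense graph seems to live exactly in that overlap data. Consequently the honest proposal is to present the reduction to the connected (dense) case, together with the catalogue of already-settled families, as the current state of the art, and to flag that closing the remaining gap demands an idea that goes strictly beyond subgraph counting — which is why, in contrast to the concrete ``common cards'' theorems that are the real contribution of this paper, the present statement must be left as a conjecture rather than a theorem.
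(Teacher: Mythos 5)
You have correctly identified that this statement is the Reconstruction Conjecture of Kelly and Ulam, which the paper states as a \emph{conjecture} precisely because no proof is known; the paper itself offers no proof for you to match, so there is nothing to compare your attempt against. Your partial reductions are accurate as far as they go: Kelly's counting lemma (each copy of $H$ survives on exactly $n-|V(H)|$ cards, so subgraph counts, the edge count, the degree sequence, and connectedness are all recognizable), the reconstructibility of disconnected graphs, and the reconstructibility of trees, regular graphs, and other structured families are all established results. Your candid admission that the general connected case — especially the dense case — remains open, and that linear subgraph counting cannot capture the overlap structure needed there, is the correct and honest conclusion: the statement must remain a conjecture, exactly as the paper presents it.
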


The condition that the graph has at least $3$ vertices is necessary, as both graphs on two vertices ($K_2$ and $2 K_1$) have a deck of two cards $K_1$. Hence, in this article, we will only consider graphs that have at least 3 vertices, even when it is not explicitly stated.

Although this conjecture is still open, we know that many graph properties can be deduced from the deck. For example, Kelly \cite{kelly1957congruence} used a counting argument to show that it is possible to determine the number of subgraphs of $G$ isomorphic to $F$, for all graphs $F$ with strictly fewer vertices than $G$. Also, the characteristic polynomial, the number of Hamiltonian cycles, and the number of spanning trees \cite{tutte1979all} can be deduced from the deck. Therefore, we use the following definition: A graph property is called \textit{recognizable} if it can be deduced from the deck of a graph.

Furthermore, mathematicians have shown that all members of certain families of graphs are reconstructible. One of the first classes for which this was shown was trees in 1957 by Paul Kelly \cite{kelly1957congruence}. Since then, much research has been conducted to determine the number of cards one needs to deduce whether a graph is a tree. For instance, Wendy Myrvold \cite{myrvold1990allytree} showed that every tree on at least 5 vertices has 3 special cards from which we can reconstruct the original tree. She \cite{myrvold1989ally} also found several families of examples of trees $G$ and disconnected graphs $H$ such that the decks of these two graphs contain the same subset $\lfloor\frac{n}{2}\rfloor+1$ cards and showed that any $\lfloor\frac{n}{2}\rfloor+2$ cards in the deck of $G$ determine whether $G$ is connected.

At the same time, it is difficult to determine the precise number of edges of $G$ based on a subdeck of $G$. The current best-known result is that for $n$ sufficiently large and $k \leq \frac{1}{20}\sqrt{n}$, the number of edges of a graph is recognizable from any $n - k$ cards. This result was found in 2021 by Groenland, Guggiari, and Scott \cite{groenland2021size}. Moreover, Bowler, Brown, and Fenner \cite{bowler2010families} found examples of graphs with a different number of edges that have $2\lfloor \frac{1}{3}(n-1)\rfloor$ cards in common. These results both indicate that in order to find the minimum number of cards needed to determine whether a graph is a tree, one can not simply try to reconstruct $e(G)$. Therefore, it is necessary to consider the number of overlapping cards between a tree $T$ and a connected non-tree $G$.

Bowler, Brown and Fenner \cite{bowler2010families} gave an example of
infinitely many pairs of $n$-vertex trees and $n$-vertex connected non-trees that have $\lfloor \frac{2}{5}(n+1)\rfloor$ cards in common, and based on their example they stated the following conjectures:

\begin{Con}[Bowler, Brown and Fenner \cite{bowler2010families}]
    For $n\geq 44$, the only  pair of graphs on $n$ vertices that have at least $\lfloor \frac{2}{5}(n+1)\rfloor$ common cards, where one is a tree and the other a connected non-tree, is the pair found in Theorem 3.6 of \cite{bowler2010families}.
\end{Con}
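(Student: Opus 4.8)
The plan is to reduce first to the case that $G$ is unicyclic and then to pin down the structure of the pair $(T,G)$. Let $T$ be a tree and $G$ a connected non-tree, both on $n$ vertices, with $k\ge\lfloor\tfrac{2}{5}(n+1)\rfloor$ common cards. Every card of $T$ is a forest, so any common card $C=G-v$ is a forest, i.e.\ $v$ lies on every cycle of $G$; let $W$ be the set of such vertices. The forests among the cards of $G$ are exactly the sub-multiset $\{G-v:v\in W\}$, so $k\le|W|$. If the cyclomatic number of $G$ is at least $2$, a short exchange argument (any edge outside a fixed cycle $O$ of $G$ forms, together with a sub-arc of $O$, a cycle missing one of any three prescribed vertices) gives $|W|\le 2$, contradicting $k$ large. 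Hence $G$ is unicyclic, $W=V(O)$ for its unique cycle $O$ of length $g$, and $\lfloor\tfrac{2}{5}(n+1)\rfloor\le k\le g$; one may also start from $k\le\lfloor n/2\rfloor+1$ via the main theorem if convenient.

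Next I would describe exactly how common cards arise. Write $O=v_1v_2\cdots v_g$ with a (possibly empty) pendant forest $F_i$ rooted at $v_i$, so $\deg_G(v_i)=2+c(F_i)$, where $c(F_i)$ is the number of components of $F_i$, and $\sum_i|V(F_i)|=n-g$. For a cycle vertex $v_i$ whose card $G-v_i$ is common, an edge count forces $G-v_i\cong T-u_i$ with $\deg_T(u_i)=\deg_G(v_i)-1$: concretely $G-v_i=B_i\sqcup F_i$, where $B_i$ is the ``opened'' caterpillar consisting of the path $v_{i+1}\cdots v_{i-1}$ with the forests $F_{i+1},\dots,F_{i-1}$ attached along it, and $T$ is obtained from $B_i\sqcup F_i$ by adding one new vertex $u_i$ joined to a single vertex of $B_i$ and to one vertex of each component of $F_i$. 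Thus $k$ common cards give $k$ presentations of the one tree $T$ as ``the cycle $O$ cut open at $v_i$ with $v_i$'s pendant forest redistributed through a new vertex.'' From this I would extract: (i) whenever $G-v_i\cong G-v_j$, the linear sequences $(F_{i+1},\dots,F_{i-1})$ and $(F_{j+1},\dots,F_{j-1})$ agree up to reversal, which forces a near-periodic pattern on the cyclic word $(F_1,\dots,F_g)$, and dually $T$ must have many vertices of equal low degree whose deletion returns exactly these cards; (ii) tight constraints on where $u_i$ attaches in $T$. Re-imposing $|V(T)|=n$ and the degree bookkeeping then forces the total pendant material $n-g$ to be small and patterned, collapsing the problem to a short list of explicit families --- essentially a long cycle carrying a bounded, (anti)symmetrically placed amount of pendant material plus the redistributing vertex --- which contains the configuration of Theorem 3.6 of \cite{bowler2010families}.

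Finally, within this restricted list I would compute $k$ explicitly in terms of the few remaining parameters (cycle length, sizes and positions of the nontrivial pendant pieces), maximise it, and check that the maximum is $\lfloor\tfrac{2}{5}(n+1)\rfloor$, attained exactly by the Bowler--Brown--Fenner pair, while every other family member gives strictly fewer common cards once $n$ is large enough to absorb sporadic small solutions and boundary effects; the converse (that that pair does attain the bound) is immediate. The hard part will be establishing (i) and (ii): in the regime where several of the $F_i$ are nontrivial, ``$G-v_i\cong G-v_j$'' is genuinely weaker than ``$v_i,v_j$ related by a symmetry of $G$,'' so rather than assuming clean periodicity one must show that substantial or irregular pendant material makes the caterpillars $B_i$ too easy to distinguish, hence $k$ small, and one must verify that the only configuration for which both $\operatorname{deck}(G)$ and $\operatorname{deck}(T)$ realise the shared cards with the required multiplicities is the Bowler--Brown--Fenner one. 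Separating the large-girth endpoint (where $G$ is almost a cycle, $T$ almost a path, and $k=O(1)$) from the balanced midrange, and ruling out competing near-symmetric configurations that could a priori also approach $\tfrac{2}{5}n$ common cards, is the delicate core of the argument.
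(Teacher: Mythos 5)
The statement you are trying to prove is presented in the paper as an open conjecture of Bowler, Brown and Fenner, not as a theorem: the paper explicitly does \emph{not} prove it, and its main result only establishes the much weaker bound of $\lfloor \frac{n}{2}\rfloor+1$ common cards between a tree and a connected non-tree (with a remark improving this to roughly $\frac{n}{2}-\frac{\sqrt{n}}{80}$). So there is no paper proof to compare against, and any complete argument for the $\lfloor \frac{2}{5}(n+1)\rfloor$ bound together with the uniqueness of the extremal pair would be a genuinely new result going well beyond the paper.

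Your proposal does not close this gap. The opening reduction (common cards are forests, hence $G$ is unicyclic and all common cards correspond to cycle vertices) is correct and is exactly Lemma \ref{max one cycle} plus the observations following it. But everything after that is a plan rather than a proof: the claims you label (i) and (ii) --- that isomorphisms $G-v_i\cong G-v_j$ force near-periodicity of the cyclic word of pendant forests, and that the degree bookkeeping ``collapses the problem to a short list of explicit families'' --- are precisely the unestablished core. You acknowledge this yourself (``the hard part will be establishing (i) and (ii)''), but that hard part is the entire content of the conjecture. The paper's own machinery (the two colorings bounding the longest path and the cycle length, and the green-vertex gap argument) only yields contradictions once the number of common cards exceeds roughly $n/2$; nothing in the paper, and nothing concrete in your sketch, shows how to push the threshold down to $\frac{2}{5}(n+1)$ or how to classify the extremal configurations. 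The closest known partial result, cited in the introduction, is Brown's thesis handling only the caterpillar-versus-sunshine subcase. As written, your proposal should be regarded as a research programme, not a proof.
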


\begin{Con}[Bowler, Brown and Fenner \cite{bowler2010families}]
    For $n\geq 44$, it can be determined whether a graph is a tree from any $\lfloor\frac{n}{2}\rfloor+2$ of its cards.
\end{Con}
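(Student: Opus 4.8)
The plan is to derive the conjecture, for all sufficiently large $n$, from a single statement about common cards. First note that the property ``any $k$ cards of an $n$-vertex graph determine whether it is a tree'' is equivalent to the assertion that no $n$-vertex tree and $n$-vertex non-tree have $k$ cards in common. Hence it suffices to prove that an $n$-vertex tree $T$ and an $n$-vertex non-tree $H$ have at most $\lfloor n/2\rfloor+1$ common cards. A non-tree on $n$ vertices falls into one of three classes: (i) $H$ is disconnected; (ii) $H$ is connected and unicyclic; (iii) $H$ is connected with cycle rank at least $2$. I would handle these separately, so that the conjecture reduces to the unicyclic case.

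Case (i) is precisely Myrvold's theorem cited in the introduction. For case (iii) the key observation is that every card of a tree is a forest, whereas for a connected graph $H$ the card $H-v$ is a forest if and only if $v$ lies on every cycle of $H$. I would show that when the cycle rank of $H$ is at least $2$ at most two vertices have this property: picking two linearly independent cycles $C_1,C_2$ (which exist since the cycle space has dimension $\ge 2$) and assuming $H-u,H-v,H-w$ are all forests for distinct $u,v,w$, every cycle --- in particular $C_1$ and $C_2$ --- passes through all of $u,v,w$; comparing the cyclic orders in which $C_1$ and $C_2$ meet $u,v,w$ and splicing suitable arcs then produces a cycle missing one of them, a contradiction. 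Consequently $T$ and $H$ share at most $2\le\lfloor n/2\rfloor+1$ cards whenever $n\ge 3$.

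The substance of the argument is case (ii). Let $G$ be unicyclic with unique cycle $C$ of length $g$. If $w\notin V(C)$ then $G-w$ still contains $C$, so it is not a forest and cannot be a card of $T$; thus every common card comes from deleting a vertex of $C$, and $T$ and $G$ share at most $g$ cards. This already settles the subcase $g\le\lfloor n/2\rfloor+1$. The remaining subcase $g\ge\lfloor n/2\rfloor+2$ --- where the cycle occupies more than half of $G$ --- is the main obstacle. Here $G$ is the long cycle $C$ together with a forest of total size $n-g<g$ attached to it, and for each $w\in V(C)$ the card $G-w$ contains a path on $g-1\ge\lfloor n/2\rfloor+1$ vertices; matching $G-w$ with $T-v$ therefore forces $T$ to contain such a path, so $T$ is ``path-like'': it has a spine of at least $\lfloor n/2\rfloor+1$ vertices with only $O(n-g)$ vertices off it. I would then encode each card by its spine together with the list of rooted subtrees hanging off the spine, and show that $T$ and $G$ cannot simultaneously realize $\lfloor n/2\rfloor+2$ cards of this constrained shape: every isomorphism $G-w\cong T-v$ rigidly links the position of an off-cycle part of $G$ to an off-spine part of $T$, and tracking these linkages across all the shared cards --- via an extremal/parity argument of the same flavour as the girth result announced in the abstract, which rests on exactly this long-cycle-versus-long-path dichotomy --- yields the bound $\lfloor n/2\rfloor+1$. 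Combining the three cases gives that for all sufficiently large $n$ a tree and a non-tree on $n$ vertices have at most $\lfloor n/2\rfloor+1$ common cards, which is the claimed statement.
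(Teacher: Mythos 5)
Your reduction is exactly the paper's: Myrvold's theorem disposes of disconnected non-trees, a graph with two or more independent cycles shares at most two cards with any forest (your splicing argument is a legitimate variant of the paper's three-disjoint-paths proof of Lemma \ref{max one cycle}), and the unicyclic case with short cycle is immediate since every common card must come from deleting a cycle vertex. Up to and including the observation that in the remaining subcase the cycle has length $g\geq\lfloor n/2\rfloor+2$ and $T$ must contain a path on $g-1$ vertices, you are on the paper's track.

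But the entire substance of the theorem lives in the subcase you dispatch in one sentence, and what you offer there is a restatement of the goal, not an argument. "Tracking these linkages across all the shared cards via an extremal/parity argument yields the bound" does not identify the mechanism of the contradiction, and it is not of the same flavour as the girth proof, which only needs a single well-placed card and never has to compare how two different graphs distribute their decorations. To make the unicyclic case work the paper needs a chain of quantitative control that your sketch has no substitute for: a counting argument showing $G$ has at least $n/4-1$ leaves (hence few degree-$\geq 2$ vertices in branches, which is what makes the charging in the colorings affordable); a coloring argument producing a common card from a ``white'' cycle vertex, which bounds the longest path of $T$ by $L+4$; a second coloring with deliberate gaps showing that the interior vertices of that longest path cannot yield common cards, which forces $L\leq n/2+19.5$; and only then --- knowing that all but $18$ cycle vertices give common cards and branches have height at most $14$ --- the final argument that the vertices of maximal attached-branch weight are identifiable on every card, are few, and sit at distances that vary by more than $10$ across cards of $G$ but by at most $10$ across cards of $T$. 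Without an upper bound on $L$ you cannot even assert that most cycle vertices participate in common cards, so your proposed ``tracking across all shared cards'' has no foothold. As written, the proposal proves the easy cases and leaves the hard one open.
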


In the example from Theorem 3.6 of \cite{bowler2010families}, the tree is a caterpillar graph and the connected non-tree is a sunshine graph (a cycle with some leaves). This led to further investigation into the overlapping cards between these special subclasses of trees and connected non-trees. In his PhD thesis \cite{brown2008maximum}, Paul Brown showed that the maximum number of overlapping cards between a sunshine and a caterpillar graph is at most $\lfloor \frac{2}{5}(n+1)\rfloor$ and that bound is only attained by the family found by Bowler et al. 

The main result of this article is that for $n$ large enough the maximum overlapping cards between a tree and a connected non-tree is at most $\lfloor \frac{n}{2}\rfloor+1$. Moreover, it contains a remark that indicates how this proof can be adapted to prove a slightly better bound of $\frac{n}{2}-\frac{\sqrt{n}}{80}$.
Even though these are not the conjectured bound of $\lfloor \frac{2}{5}(n+1)\rfloor$, it is still sufficient to prove the following theorem:

\begin{restatable}{Theorem}{maintrees}\label{main thm trees}
For all $n \geq 5000$, it can be determined whether a graph with $n$ vertices is a tree based on any $\lfloor\frac{n}{2}\rfloor+2$ of its cards.
\end{restatable}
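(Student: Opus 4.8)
The plan is to derive Theorem~\ref{main thm trees} from the paper's bound on the common cards of a tree and a connected non‑tree, together with Myrvold's results, via the standard recognizability reduction. Recall that a property is recognizable from $\ell$ cards precisely when no two $n$‑vertex graphs with opposite values of the property have $\ell$ common cards: given a sub‑multiset $\mathcal C$ of $\ell$ cards of an unknown $n$‑vertex graph $G_0$, the statement ``$G_0$ is a tree'' is equivalent to ``$\mathcal C$ is a sub‑multiset of the deck of some $n$‑vertex tree'', which depends only on $\mathcal C$. So it suffices to prove: for $n\ge 5000$, an $n$‑vertex tree $G$ and an $n$‑vertex non‑tree $H$ have at most $\lfloor n/2\rfloor+1$ common cards. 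Assume for contradiction that $G$ is a tree, $H$ is not, and they share at least $\lfloor n/2\rfloor+2$ cards; we split on the structure of $H$.

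If $H$ is disconnected, then since $G$ is connected, Myrvold's theorem gives at most $\lfloor n/2\rfloor+1$ common cards, a contradiction. Suppose instead $H$ is connected but not a tree, and write $e(H)=n-1+r$ with $r\ge 1$. When $r=1$, $H$ is unicyclic and the main theorem of this article bounds the common cards of $G$ and $H$ by $\lfloor n/2\rfloor+1$, again a contradiction. The remaining case $r\ge 2$ is elementary: every card $G-u$ of the tree $G$ is a forest with exactly $\deg_G(u)\ge 1$ components, so any common card $C=G-u=H-v$ is a forest on $n-1$ vertices with $e(C)=(n-1)-\deg_G(u)$, and comparing with $e(C)=e(H)-\deg_H(v)$ gives $\deg_H(v)=e(H)-(n-1)+\deg_G(u)\ge r+1$. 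Hence each common card arises by deleting from $H$ a vertex of degree $\ge r+1$, and distinct copies of common cards in the deck of $H$ come from distinct such vertices, so the number of common cards is at most $t:=|\{v\in V(H):\deg_H(v)\ge r+1\}|$. Since $H$ is connected, $2e(H)=\sum_v\deg_H(v)\ge t(r+1)+(n-t)=n+tr$, whence $t\le \tfrac{2e(H)-n}{r}=\tfrac{n-2}{r}+2\le\tfrac{n-2}{2}+2=\tfrac n2+1$, so $t\le\lfloor n/2\rfloor+1$, a contradiction (and for $r\ge 3$ this is far below $n/2$).

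The only care needed in the reduction is to check that the three estimates — Myrvold for disconnected $H$, the main theorem for unicyclic $H$, and the degree count for $e(H)\ge n+1$ — exhaust all non‑trees $H$ and are each strictly below $\lfloor n/2\rfloor+2$; the hypothesis $n\ge 5000$ is simply inherited from the main theorem, whose proof carries a $\Theta(\sqrt n)$ error term. I expect essentially no obstacle in this corollary: all the genuine difficulty lies in the unicyclic case, i.e.\ the main theorem of the paper, which is assumed here.
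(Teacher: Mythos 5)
Your reduction is sound as far as it goes: recognizability from $\ell$ cards is indeed equivalent to the nonexistence of a tree and a non-tree on $n$ vertices sharing $\ell$ cards, the disconnected case is correctly dispatched by Myrvold's theorem, and your degree-counting argument for connected $H$ with $e(H)\ge n+1$ is correct. That last piece is in fact a genuinely different route from the paper's Lemma~\ref{max one cycle}, which instead shows directly that any graph containing two cycles shares at most $2$ cards with any forest; your count only gives $\lfloor n/2\rfloor+1$, but that suffices here, and it is self-contained.

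The gap is the unicyclic case, which you dispose of by citing ``the main theorem of this article'' and explicitly assuming it. But the statement you were asked to prove \emph{is} the main theorem, and in the paper its proof \emph{is} the argument that a tree $T$ and a connected unicyclic graph $G$ on $n\ge 5000$ vertices cannot share $\lfloor n/2\rfloor+2$ cards. That argument occupies all of Section~\ref{proof Thm}: one first shows the cycle has length $L\ge\lfloor n/2\rfloor+2$ and that $G$ has at least $n/4-1$ leaves, hence few vertices of degree $\ge 2$ off the cycle; two colouring arguments then bound the longest path of $T$ by $L+4$ and force $L\le n/2+19.5$, so at most $18$ cycle vertices fail to yield common cards and every branch has height at most $14$; finally a ``green vertex'' argument about the (at most $21$) cycle vertices maximizing the total size of attached branches produces, inside a long green-free gap, two common cards whose minimal distances to a green vertex differ by more than $10$, which is shown to be impossible. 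None of this appears in your proposal, so what you have written is not a proof of the theorem but a (correct) verification that it follows from its own hardest ingredient. To complete the proof you must supply the tree-versus-unicyclic bound itself.
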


Section 2 contains a sketch of the main ideas in the proof overview. The details of the proof can be found in section \ref{proof Thm}. Thereafter, we apply the main theorem to prove a generalization about the number of common cards between forests and non-forests. This proof uses several cases, where in one case we need the main theorem. As the main theorem only holds for $n\geq 5000$, the following theorem about forests also only holds for $n\geq 5000$.

\begin{restatable}{Theorem}{mainforest}\label{main forest}
For $n \geq 5000$, it can be determined whether a graph with $n$ vertices is a forest based on any $\lfloor\frac{n}{2}\rfloor+2$ of its cards.
\end{restatable}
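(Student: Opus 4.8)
The plan is to prove the equivalent statement that any forest $F$ and any non-forest $H$, both on $n \geq 5000$ vertices, have at most $\lfloor n/2 \rfloor + 1$ common cards; this implies Theorem~\ref{main forest}, since it says exactly that no collection of $\lfloor n/2 \rfloor + 2$ cards can belong simultaneously to the deck of a forest and the deck of a non-forest. The first step is the observation that a common card $C = F - u = H - v$ is an induced subgraph of the forest $F$, hence a forest itself, so $v$ lies on every cycle of $H$. In particular all cycles of $H$ lie in a single component $H^{*}$ of $H$ (otherwise there is no such $v$ and no common card), $H^{*}$ is a connected non-tree, and, writing $W = \{v \in V(H) : H - v \text{ is a forest}\}$, the number of common cards is at most $|W|$: only forests can be common cards, and each forest contributes to the common cards at most its multiplicity in the deck of $H$, and these multiplicities sum to $|W|$.

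I would then split into three cases. If $F$ is connected it is a tree, and since $H$ is a non-forest (hence a non-tree), Theorem~\ref{main thm trees} directly gives at most $\lfloor n/2 \rfloor + 1$ common cards; this is the case that uses the main theorem. If $F$ is disconnected and $H$ is connected, Myrvold's theorem \cite{myrvold1989ally}, that a connected and a disconnected $n$-vertex graph share at most $\lfloor n/2 \rfloor + 1$ cards, applies directly. The remaining case, $F$ and $H$ both disconnected, is where the new work lies. Here I would first dispose of $H^{*}$ not unicyclic: then $H^{*}$ has at least two independent cycles, a vertex of $W$ must destroy all of them at once, and a short structural analysis (such a vertex lies in the common part of all those cycles, which for cyclomatic number at least $2$ is an absolutely bounded set — essentially a hub together with one median-type vertex) shows $|W| \leq 2$, far below $\lfloor n/2 \rfloor + 1$. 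So I may assume $H^{*}$ is unicyclic; if its cycle has length $g$ then $W$ is precisely the vertex set of that cycle, so there are at most $g$ common cards, and I may further assume $g > \lfloor n/2 \rfloor + 1$.

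It remains to bound the common cards of a disconnected forest $F$ and a disconnected graph $H = H^{*} \sqcup S$ in which $H^{*}$ is connected unicyclic with a long cycle and $S$ is a nonempty forest. The idea is a peeling argument: I would show that for the number of common cards to reach the range $\lfloor n/2 \rfloor$, the component multiset of $F$ must line up with that of $H$, namely $F$ must consist of one dominant tree $T^{\circ}$ together with copies of the components of $S$, after which the common cards of $F$ and $H$ correspond, up to a bounded additive error coming from vertex deletions inside the stripped-off common components, to the common cards of the single tree $T^{\circ}$ and the unicyclic graph $H^{*}$ on at most $n$ vertices. That last count is at most $\lfloor n/2 \rfloor + 1$ — indeed at most $\tfrac{n}{2} - \tfrac{\sqrt n}{80}$ by the remark following the main theorem — and the sharper bound absorbs the additive error. (If a self-contained proof that avoids re-invoking Theorem~\ref{main thm trees} is preferred, one can instead run a Myrvold-style analysis directly on how the components of $F$ and $H$ split under vertex deletion.)

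The main obstacle is this final case. Making the peeling precise is delicate: deleting a vertex of $F$ can split one of its tree components into several pieces; $F$ might a priori have more than one large tree component, so one must first rule that out (or handle it) by a counting argument comparing the sizes and edge counts of $F$ and $H^{*} \sqcup S$; and one must control exactly how many extra common cards are produced by deleting vertices inside the small components that are matched away. I expect the cleanest route is to quantify ``dominant'' sharply enough that the residual pair $(T^{\circ}, H^{*})$ still has more than $5000$ vertices, and then to quote the sharpened tree-versus-unicyclic bound, the slack in which (the constant $80$ in the remark) is exactly what is needed to swallow the bounded error contributed by the peeled components.
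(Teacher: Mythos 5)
Your reduction to the easier cases is sound and runs parallel to the paper: a common card is a forest, so the deleted vertex of $H$ must lie on every cycle of $H$; if $H$ has two or more cycles, at most two of its cards are acyclic (the paper's Lemma~\ref{max one cycle} proves this cleanly via three vertex-disjoint paths, rather than your ``hub plus median'' sketch); if $F$ is connected it is a tree and Theorem~\ref{main thm trees} applies; if $F$ is disconnected and $H$ connected, Myrvold's connected-versus-disconnected bound applies. The genuine gap is exactly in the case you flag as the hard one, namely $F$ and $H$ both disconnected with $H$ unicyclic: the assertion that ``the component multiset of $F$ must line up with that of $H$'' (one dominant tree plus copies of the components of $S$) is the entire content of that case, and your proposal only lists the obstacles (possibly several large tree components, components splitting under deletion, control of the error) without resolving them. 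The paper resolves this by first proving (Lemma~\ref{relation K_F}) that some common card deletes a degree-$2$ cycle vertex of $H$, which forces $\kappa_F\le\kappa_G+1$ and $M_F\ge M_G-1$, and then running a four-way case analysis on $\kappa_F-\kappa_G\in\{1,0,-1,\le -2\}$ refined by comparing $M_F$ with $M_G$. Every ``misaligned'' case is eliminated by a direct vertex count showing $G$ would need more than $n$ vertices (or, in the case $\kappa_F=\kappa_G-1$, $M_F>M_G-1$, by Lemma~\ref{component H in connected} bounding how many cards of a connected graph can exhibit a fixed small component). Only the aligned case $\kappa_F=\kappa_G$, $M_F=M_G$ reduces to Theorem~\ref{main thm trees}, and there the small components of $F$ and $H$ are provably isomorphic and are never touched by a common card (every common card deletes a cycle vertex of $H$, hence a largest-component vertex of both graphs), so the peeling is exact with no additive error.

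A further concrete problem is your fallback of absorbing a ``bounded additive error'' with the sharpened bound $\frac{n}{2}-\frac{\sqrt n}{80}$ from the remark after Theorem~\ref{main thm trees}: that remark only gives a contradiction for $n>70\,000$, whereas the theorem is claimed for all $n\ge 5000$, so in the stated range there is no slack available. The paper's proof shows that no slack is needed: whenever the component counts and largest-component sizes fail to agree, the pair is killed by counting, and when they do agree the reduction to the tree-versus-unicyclic theorem is lossless. To complete your argument you would need to supply precisely this dichotomy, at which point you would essentially have reconstructed the paper's case analysis.
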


The last two sections contain applications of coloring techniques similar to the coloring used in the proof of Theorem \ref{main thm trees} to prove other results that are determined by the (non-)existence of certain cycles. By using these colorings, it can be shown that both the girth, the length of the shortest cycle, and bipartiteness, the existence of cycles of odd length, can be deduced even if there is a linear amount of missing cards.

\begin{restatable}{Theorem}{thmgirth}\label{thm girth}
   For a graph on $n$ vertices, any $\frac{2n}{3}+1$ cards are sufficient to determine its girth.
\end{restatable}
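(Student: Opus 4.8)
The plan is to reduce the statement to a two-graph assertion and then to a rigid ``long cycle'' regime, where a colouring argument of the kind used in Section~\ref{proof Thm} applies. Write $k=\lceil\tfrac{2n}{3}\rceil+1$ for the number of available cards. It suffices to prove:
\begin{equation*}
\text{if $n$-vertex graphs $G$ and $H$ have at least $k$ common cards, then $g(G)=g(H)$.}
\end{equation*}
Granting this, every graph that realizes a given collection of $k$ cards as a sub-multiset of its deck has the same girth as every other such graph, so the girth of a graph is a function of any $k$ of its cards. (For the finitely many $n$ with $k\ge n-2$ one can argue directly: all but a bounded number of cards are then present, so Kelly's counting lemma \cite{kelly1957congruence} gives the number of cycles of each length less than $n$, and the only $n$-vertex graph of girth $n$ is $C_n$, which is reconstructible. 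Hence $n$ may be thought of as large, which is convenient but not essential.)

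Suppose, for contradiction, that $G$ and $H$ share at least $k$ cards while $g(G)<g(H)$; then $G$ has a cycle. Each common card, being a subgraph of $H$, has girth at least $g(H)>g(G)$, and a card $G-v$ with $g(G-v)>g(G)$ forces $v$ to lie on every shortest cycle of $G$. So at least $k$ distinct vertices of $G$ lie on one shortest cycle, whence $g(G)\ge k$. A short (standard) calculation then shows that an $n$-vertex graph of girth at least $k$ is a forest or unicyclic: two vertex-disjoint cycles would use more than $n$ vertices, and a theta subgraph with arc-lengths $a\le b\le c$ has $a+b+c-1\le n$ vertices and $c\ge b\ge\tfrac{a+b}{2}$, so its shortest cycle has $a+b\le\tfrac23(n+1)<k$ vertices, a contradiction. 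Hence $G$ is unicyclic with a unique cycle $Z$ of length $\ell=g(G)\ge k$, and $H$ --- whose girth is either infinite or larger than $\ell\ge k$ --- is a forest, or unicyclic with a cycle strictly longer than $\ell$. Finally, each common card $G-v$ has $v\in Z$, since otherwise $G-v$ still contains $Z$ and so has girth $\ell<g(H)$, which is absurd.

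It remains to rule out this configuration, and this is where the ideas of Section~\ref{proof Thm} enter. Write $Z=v_0v_1\cdots v_{\ell-1}$ with pendant forests $F_i$ rooted at $v_i$, so that $\sum_i|F_i|=n$ and each $|F_i|\le n-\ell+1$. The common cards yield a partial injection $\phi$ from a set $A\subseteq Z$ with $|A|\ge k$ into $V(H)$ such that $G-v_i\cong H-\phi(v_i)$ for $v_i\in A$. Since $|Z\setminus A|\le\ell-k\le\tfrac n3-1$, the set $A$ contains many cyclically consecutive pairs $v_i,v_{i+1}$; for any such pair, both $G-v_i$ and $G-v_{i+1}$ contain the connected ``spine'' $S_i$, namely the path $v_{i+2}\cdots v_{i-1}$ together with every pendant forest except those at $v_i$ and $v_{i+1}$, and $|S_i|=n-|F_i|-|F_{i+1}|\ge\ell-2>\tfrac{2n}{3}-2$. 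Carried through $\phi$, the graph $S_i$ fills all but a bounded remainder of both $H-\phi(v_i)$ and $H-\phi(v_{i+1})$, so $\phi(v_i)$ and $\phi(v_{i+1})$ lie near one fixed large subtree (or long arc of the cycle) of $H$, and both embeddings are nearly rigid. Colouring $V(H)$ by position relative to this large common piece --- in the style of the colouring in Section~\ref{proof Thm}, so that the isomorphisms $G-v_i\cong H-\phi(v_i)$ become colour-preserving off a bounded set --- one tracks how the colouring must shift as $i$ runs through a consecutive block of $A$, and splices the blocks. In $G$ the step $i\mapsto i+1$ is a rotation of $Z$, so $\ell$ steps return to the start; in $H$ it is a translation along a path --- and a longest path of a forest has two distinct ends, which obstructs an $\ell$-periodic shift --- or along $H$'s cycle $Z_H$, whose length $g(H)\ne\ell$ is the wrong period. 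Either way one obtains a contradiction, completing the reduction and hence the theorem.

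The main obstacle is this last step: upgrading ``the local isomorphisms $G-v_i\cong H-\phi(v_i)$ force compatible shifts of the colouring'' to a genuine $\ell$-fold cyclic rigidity on $H$ that no forest, and no unicyclic graph with a cycle of length $\ne\ell$, can carry. This is exactly the rigidity phenomenon that drives the proof for trees versus unicyclic graphs in Section~\ref{proof Thm}, which is why the colouring argument there --- rather than Theorem~\ref{main thm trees} itself --- does the job, and why no lower bound on $n$ beyond $k>\tfrac{2n}{3}$ is required.
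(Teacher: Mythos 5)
Your reductions up to the rigid configuration are sound and essentially parallel the paper's: either the girth is infinite or at most $\tfrac{2n}{3}$ and can be read off the cards, or else the graph is unicyclic (your theta-graph calculation is exactly Lemma~\ref{max een cycle}) with one long hidden cycle on which every card-vertex lies. But the step that has to do the actual work --- ruling out two graphs with at least $\tfrac{2n}{3}+1$ common cards whose hidden cycles have different lengths, or one of which is a forest --- is not proved. ``One tracks how the colouring must shift\ldots{} and splices the blocks\ldots{} either way one obtains a contradiction'' is a plan, not an argument, and you concede as much by calling it ``the main obstacle.'' Nothing in the proposal establishes the claimed near-rigidity of the embeddings, the colour-preservation ``off a bounded set,'' or why an $\ell$-periodic shift is obstructed; each would need its own proof, and it is unclear the periodicity mechanism can even be set up when the pendant forests $F_i$ repeat.

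The paper closes this case with a much lighter observation that makes the rigidity apparatus unnecessary. The forest case dies immediately by Lemma~\ref{forest vs cycle} (a forest and a non-forest share at most $\tfrac{2n}{3}$ cards). For a hidden cycle of length $L$, every card corresponding to a cycle vertex contains a path on $L-1$ vertices; on the other hand, colouring red, for each branch on $k$ vertices rooted at a cycle vertex $v$, the cycle vertices at distance $1,\dots,k$ from $v$ (so each of the $n-L$ non-cycle vertices contributes at most two red vertices) leaves at least $3\left(L-\tfrac{2n}{3}-1\right)+3$ white vertices, which exceeds the number of cycle vertices missing from the subdeck. Hence some available card $G-w$ has $w$ white, and on that card no branch can lengthen the cycle path, so its longest path has exactly $L-1$ vertices. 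Therefore $L-1$ equals the minimum, over the given cards, of the longest-path length --- a quantity computable from the cards alone --- so any two graphs sharing those cards have the same girth, with no need to track isomorphisms between them. If you want to keep your two-graph framing, this is the lemma to prove: it instantly contradicts $g(H)>\ell$, since every card of such an $H$ would contain a path on more than $\ell-1$ vertices.
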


\begin{restatable}{Theorem}{thmbipartite}\label{thm bipartite}
For any graph on $n$ vertices, any $\frac{5n}{6}+2$ of its cards are sufficient to determine if it is bipartite.
\end{restatable}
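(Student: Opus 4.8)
The plan is to observe first that a bipartite graph has no non-bipartite card, and conversely that a non-bipartite graph has few bipartite cards unless its odd girth is very large; this reduces the theorem to recognizing the non-bipartite graphs of very large odd girth, which I would handle by a structural analysis together with a colouring argument in the spirit of Theorem~\ref{main thm trees}, as is done for Theorem~\ref{thm girth}.

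Concretely, suppose we are given a subdeck $\mathcal{D}$ consisting of $\frac{5n}{6}+2$ of the $n$ cards of a graph $G$, so that $k:=n-|\mathcal{D}|\le \frac{n}{6}-2$ cards are missing. If some card in $\mathcal{D}$ is non-bipartite, then $G$ contains that card as a subgraph and is non-bipartite. So assume every card in $\mathcal{D}$ is bipartite; if $G$ is bipartite we are done, so suppose $G$ is non-bipartite. Whenever $G-v$ is bipartite, the vertex $v$ meets every odd cycle of $G$ and hence lies on a fixed shortest odd cycle $C$; therefore at most $g:=|V(C)|$ cards of $G$ are bipartite and at least $n-g$ of them are not. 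Since all of these $n-g$ non-bipartite cards must be among the $k$ missing ones, $n-g\le k$, i.e.\ $g\ge n-k=\frac{5n}{6}+2$. Thus if $\mathcal{D}$ consists only of bipartite cards, either $G$ is bipartite or $G$ is non-bipartite with odd girth at least $\frac{5n}{6}+2$, and it remains only to recognize this last possibility.

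For this I would analyse the rigid structure forced by a large odd girth. The shortest odd cycle $C$, of length $g\ge\frac{5n}{6}+2$, is chordless: a chord splits $C$ into two arcs whose lengths sum to $g$, and joining the chord to whichever arc makes the resulting cycle odd produces an odd cycle shorter than $g$. At most $n-g\le\frac{n}{6}-2$ vertices lie off $C$, and the same parity-and-length bookkeeping constrains how those few vertices attach to $C$: no off-$C$ vertex is adjacent to two consecutive vertices of $C$, and a path running through off-$C$ vertices between two vertices of $C$ can only shortcut an arc of $C$ of length at most that of the path itself. Hence in the remaining case $G$ is a long chordless odd cycle decorated by a bounded number of low-complexity gadgets. (Here one can additionally invoke the subdeck form of Kelly's counting lemma: since $k\le\frac{n}{6}-2$, for every graph $F$ on at most $\frac{5n}{6}+1$ vertices the subdeck $\mathcal{D}$ determines whether $G$ contains a copy of $F$, which gives a further handle on this structure.)

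Finally I would colour the vertices of $G$ according to their position on $C$ and on the attached gadgets, and show that every bipartite card $G-v$ --- necessarily one with $v\in V(C)$ --- displays this coloured structure along a path on $g-1\ge\frac{5n}{6}+1$ vertices whose two ends are located by the local degree pattern, so that the cards in $\mathcal{D}$ collectively pin down $g$ and the decoration and thereby reconstruct $G$ up to isomorphism, revealing in particular that $G$ is non-bipartite. I expect this reconstruction to be the main obstacle: a single such card is also a card of many bipartite graphs --- one may close its long displayed path into an even cycle, or attach the missing vertex as a leaf --- so the argument must genuinely use that all of these $\frac{5n}{6}+2$ cards arise from one common $n$-vertex graph in order to place the deleted vertex correctly on $C$ and force the reconstructed cycle to have the odd length $g$.
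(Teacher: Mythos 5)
Your reduction is sound and matches the paper's setup: if some card in the subdeck is non-bipartite you are done, and otherwise a non-bipartite $G$ must have all its non-bipartite cards among the $k\le \frac{n}{6}-2$ missing ones, forcing a shortest odd cycle of length at least $\frac{5n}{6}+2$; your structural remarks about such a cycle being chordless with tightly constrained attachments also agree with what the paper proves (every vertex off the odd cycle attaches to at most two cycle vertices, at distance exactly two). But the decisive step is missing, and you have correctly identified it yourself: you must rule out that a bipartite graph and a non-bipartite graph of huge odd girth share the same $\frac{5n}{6}+2$ cards, and "reconstruct $G$ up to isomorphism from the subdeck" is not an argument --- indeed, as you note, each individual card admits both bipartite and non-bipartite completions, so nothing in your proposal actually forces the deleted vertex to close the displayed path into an \emph{odd} cycle rather than an even one or a tree.

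The paper closes this gap with a parity argument rather than a reconstruction. Suppose a bipartite $G$ and a non-bipartite $H$ share the subdeck $D$. One first shows that at least $\frac{2n}{3}+4$ cards of $D$ come from degree-$2$ vertices of $H$, that $e(G)=e(H)$, and that these degree-$2$ vertices lie on no \emph{even} cycle of $H$; hence every even cycle of $H$ (and so every cycle of $G$ of length at most $\frac{5n}{6}+1$) has length at most $\frac{n}{3}-4$. A cut-vertex count then forces $G$ itself to contain a hidden large cycle $K$ of length at least $\frac{5n}{6}+2$, necessarily even. Finally, a red/white colouring in the style of Lemma \ref{bound max path} shows that both $K-1$ and $L-1$ (where $L$ is the smallest odd cycle of $H$) equal the minimum, over common cards deleting a degree-$2$ vertex, of the diameter of the card --- the same quantity computed from the same set of cards --- so $K=L$, contradicting that $K$ is even and $L$ is odd. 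This identity of two numerical invariants read off the shared subdeck, rather than a full reconstruction, is the idea your proposal lacks; without it the proof does not go through.
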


The proofs of these last two results indicate that the method of coloring used here is not only applicable to the proof of the upper bounds for the number of cards one needs to deduce whether a graph is a tree, but that it also can be adapted to proving upper bounds for other properties that involve cycle recognition.

\section{Proof overview}
We will consider a (fixed) tree $T$ and a unicyclic graph $G$ both on $n \geq 3$ vertices that have $\lfloor \frac{n}{2}\rfloor +2$ common cards. Since every subgraph of $T$ is a forest,  every common card of $G$ and $T$ is also a forest. If $G$ has at least $2$ cycles, then we can easily see that $T$ and $G$ have at most 2 common cards. Hence, $G$ has exactly one cycle, so $G$ is \textit{unicyclic}. As this cycle can not be visible on any card, every common card corresponds to a vertex in the cycle of $G$. Hence, the length $L$ of the unique cycle of $G$ satisfies $L \geq \lfloor \frac{n}{2}\rfloor+2$. Moreover, as $e(G)=n=e(T)+1$ a common card $T-v=G-w$ satisfies $\deg(w)=\deg(v)+1$. As trees have many vertices of degree at most 2, we can use a counting argument to find a lower bound on the number of leaves of $G$. Together with the lower bound on the length of the cycle, this gives an upper bound for the number of vertices of degree $\geq 2$ in the branches of $G$.

Next, we will use the coloring in Figure \ref{Kleuring 1} to make a distinction between vertices on the cycle that lie close to large branches (the red vertices) and the vertices that lie far enough from large branches (the white vertices). Then, we use a counting argument to show that at least one common card corresponds to a white vertex $w$ in $G$. As the large branches lie `far enough' away from $w$, the longest path on this card will only be a little longer than the length of the cycle itself. As $G-w=T-v$ consists of one large component and some isolated vertices, this also gives an upper bound for the longest path in $T$.

Thereafter, we will use another coloring, see Figure \ref{Kleuring2}, to find a vertex $w \in V(G)$ such that $G-w = T-v$ consists of one large component and some isolated vertices and such that none of the vertices $v'$ in the `middle' of the longest path of $G-w = T-v$ can correspond to a common card of $T$ and $G$. We prove this by showing that every path in $T-v'$ that uses a branch connected to the `middle' of the longest path is somewhat shorter than the longest path itself, which implies that the longest path in $T-v-v'$ will be also somewhat shorter than $L-1$. As adding the vertex $v$ back increases the longest path by at most 2 vertices, we conclude that $T-v'$ is not a common card. 

As almost all vertices on the longest path of $T$ can not correspond to a common card, we will obtain an upper bound for $L$. This upper bound implies that almost all points on the unique cycle of $G$ must correspond to a common card. We combine this with the fact that $T$ is a tree to show that there are very few vertices on the cycle of $G$ such that the sum of the branches incident to it is maximal. So the gaps between these vertices will heavily increase as $n$ goes to infinity. Hence, we see that these branches will be in different places on different cards of $G$. On the other hand, these branches will be in (somewhat) the same places on different cards of $T$ as $T$ is a tree. This will yield a contradiction.

\begin{figure}[ht]
\centering
    \includegraphics[width = 6 cm]{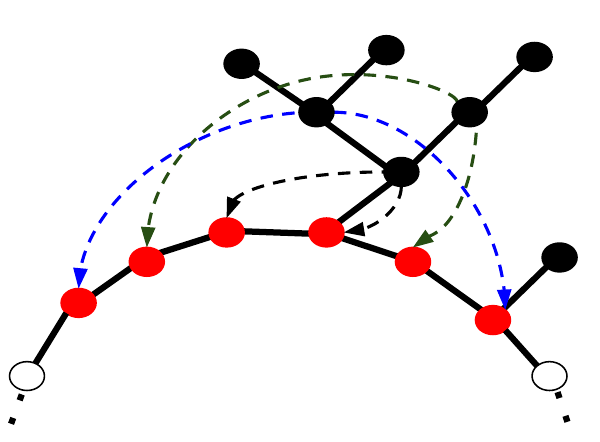}
    \caption{The coloring used in Lemma \ref{bound max path}. Every vertex of degree $\geq 2$ in a branch corresponds to two red vertices on the cycle that lie as close as possible to the root of the branch.}
     \label{Kleuring 1}
\end{figure}

\section{Proof of main theorem}\label{proof Thm}
Throughout this section $T$ will always be a tree and $G$ will always be a non-tree such that $T$ and $G$ have at least $\lfloor \frac{n}{2} \rfloor+2$ common cards. We will start by showing that $G$ is a connected unicyclic graph. Moreover, we use the convention that all vertices `$v$' lie in $T$, while all vertices `$w$' lie in $G$.

\subsection{First observations}
We will start by making some important observations and using counting arguments to prove some lemmas. According to a theorem by Myrvold \cite{WendyPhd}, the number of overlapping cards in the deck of a connected and a disconnected graph is at most $\lfloor \frac{n}{2} \rfloor +1$. So we may assume that $G$ is connected. We will now show that it suffices to only consider the case in which $G$ contains exactly one cycle as otherwise $G$ and $T$ have at most two common cards.

\begin{Lemma}\label{max one cycle}
A graph $G$ containing at least two cycles may have at most two common cards with a forest $F$.
\end{Lemma}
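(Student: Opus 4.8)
The plan is to argue that any card of $G$ that is a forest must be obtained by deleting a vertex that lies on every cycle of $G$, and then to show that there are at most two such vertices. First I would recall that a common card with a forest $F$ must itself be a forest, so if $C$ is a common card then $C \cong G - w$ for some $w \in V(G)$ with $G - w$ acyclic. Since $G$ has at least two cycles, I would consider two (not necessarily edge-disjoint) cycles $C_1, C_2$ in $G$. Any vertex $w$ whose removal destroys all cycles must in particular destroy both $C_1$ and $C_2$, so $w \in V(C_1) \cap V(C_2)$.

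Next I would bound $|V(C_1) \cap V(C_2)|$. If $C_1$ and $C_2$ share at most one vertex, then removing a single vertex cannot destroy both, so there would be \emph{no} forest card and we would be done (zero $\leq$ two). If they share at least two vertices, say $x$ and $y$, then since $C_1 \cup C_2$ is a connected subgraph containing two distinct cycles through $x$ and $y$, there are at least three internally disjoint $x$--$y$ paths inside $C_1 \cup C_2$ (this is the standard theta-subgraph observation). Deleting any single vertex $w$ leaves at least two of these three paths intact — and two internally disjoint $x$--$y$ paths form a cycle — unless $w \in \{x, y\}$. Hence the only candidates for $w$ are $x$ and $y$, giving at most two vertices whose removal yields a forest, and therefore at most two common cards with $F$.

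The step I expect to require the most care is the passage from ``$G$ has two distinct cycles sharing at least two vertices'' to ``there is a theta-subgraph, i.e. two vertices joined by three internally disjoint paths.'' One clean way is: take a minimal connected subgraph $H \subseteq G$ containing two distinct cycles; minimality forces $H$ to have no vertex of degree $1$ and exactly the right cyclomatic number, so $H$ is a subdivision of either the theta graph or two cycles meeting in at most a point — and the latter is excluded once we are in the ``share $\geq 2$ vertices'' regime. Alternatively, one can avoid the structural classification entirely by a direct argument: suppose for contradiction that three distinct vertices $w_1, w_2, w_3$ each have $G - w_i$ acyclic; fix any cycle $D$ of $G$, note each $w_i \in V(D)$, and also fix a second cycle $D'$; a short case analysis on how $D$ and $D'$ interleave around the shared vertices produces a cycle avoiding all of $w_1, w_2, w_3$, a contradiction. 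Either route is elementary; I would present whichever is shortest given the machinery the paper already has available.
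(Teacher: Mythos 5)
Your overall strategy matches the paper's: every vertex $w$ for which $G-w$ is a forest must lie on every cycle of $G$, and the count is then bounded by $2$ via three internally disjoint paths between two well-chosen vertices. The gap is in the claim that if $C_1$ and $C_2$ share two vertices $x,y$, then $C_1\cup C_2$ contains three internally disjoint $x$--$y$ paths. This is false for an \emph{arbitrary} choice of shared vertices. Take $C_1$ to be the hexagon $v_1v_2v_3v_4v_5v_6v_1$, add the chord $v_1v_4$, and let $C_2=v_1v_2v_3v_4v_1$; then $x=v_2$ and $y=v_3$ lie on both cycles, but every $x$--$y$ path other than the edge $v_2v_3$ passes through both $v_1$ and $v_4$, so there are only two internally disjoint $x$--$y$ paths. (Correspondingly, the vertices whose deletion leaves a forest are $v_1$ and $v_4$, not $v_2$ and $v_3$.) The theta structure exists, but its branch vertices need not be the pair you picked, so the argument as written neither locates the right pair nor yields the bound. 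A second, smaller slip: if $C_1$ and $C_2$ share exactly one vertex $z$, deleting $z$ \emph{does} destroy both cycles (two triangles glued at a point), so ``there would be no forest card'' is wrong; what is true is that $z$ is then the only candidate, which still gives at most one common card.

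Both issues are repairable, and your fallback options point toward valid fixes. The paper sidesteps the choice of $x,y$ entirely: it assumes three forest cards $G-w_1$, $G-w_2$, $G-w_3$ exist, uses the acyclicity of $G-w_1$ and $G-w_2$ (not merely $w_1,w_2\in V(C_1)\cap V(C_2)$) to force three internally disjoint $w_1$--$w_2$ paths, and then notes that $w_3$ lies on at most one of them, so two survive in $G-w_3$ and close up into a cycle. Your first proposed repair---classifying a minimal connected subgraph containing two distinct cycles as a subdivided theta, figure-eight, or dumbbell---also works, provided you take $x,y$ to be the branch vertices of that minimal subgraph rather than arbitrary elements of $V(C_1)\cap V(C_2)$, handle the figure-eight case as ``at most one candidate'' rather than ``none,'' and dispose of the trivial case where the two cycles lie in different components. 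As submitted, however, the main argument rests on a false step.
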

\begin{proof}
As the forest $F$ does not contain any cycles, neither do any common cards of $F$ and $G$. Next, we will prove that $G$ has at most two cards without any cycles. By contradiction, suppose that the cards $G-w_i$ for $i=1,2,3$ do not contain any cycles. Let $C$ and $C'$ be two different cycles in $G$. Because none of the cards $G-w_i$ contain any cycles, we immediately see that all $w_i$ lie both in $C$ and $C'$ as otherwise one of these cycles is visible on $G-w_i$. Since $C \neq C'$, and neither $G-w_1$ nor $G-w_2$ contains a cycle there must be at least three vertex disjoint paths between $w_1$ and $w_2$. The vertex $w_3$ can lie on at most one of these paths. Then the card $G-v_3$ contains two different paths between $w_1$ and $w_2$, implying that $G-w_3$ contains a closed walk and therefore $G-w_3$ contains a cycle. This gives a contradiction.
\end{proof}

Because of this lemma, we are only left with the case in which a tree $T$ and a connected unicyclic graph $G$ have at least $\lfloor\frac{n}{2}\rfloor+2$ common cards. As $T$ is a tree, it has $e(T)=n-1$ edges. Moreover, $e(G) = n = e(T)+1$ as $G$ is connected and contains exactly one cycle. Since $T$ does not contain a cycle, neither do any of the common cards. Hence, every vertex corresponding to a common card must lie in the cycle of $G$.
This immediately yields the following result:
\begin{Obs}
Let $L$ be the length of the unique cycle of $G$. Then $ L \geq \lfloor\frac{n}{2}\rfloor+2$.
\end{Obs}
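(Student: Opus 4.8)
The plan is to combine the acyclicity of $T$ with a vertex count on the unique cycle of $G$. First I would record that, since $T$ is a tree, each of its cards $T-v$ is a forest and in particular contains no cycle; consequently every common card $C$ of $G$ and $T$ is acyclic. Next I would observe that deleting a vertex $w$ of $G$ that does \emph{not} lie on the unique cycle leaves that cycle untouched, so $G-w$ still contains a cycle; hence if $G-w$ is acyclic, then $w$ must be one of the $L$ vertices of the cycle of $G$.

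The counting is then immediate. The multiplicity of any common card type $C$ in the deck of $G$ equals the number of vertices $w \in V(G)$ with $G-w \cong C$, and by the previous paragraph every such $w$ lies on the cycle. Summing over all common card types, the total number of common cards of $G$ and $T$ (that is, the size of the largest common sub-multiset of their decks) is at most the number of cycle vertices, namely $L$. Since by hypothesis $G$ and $T$ share at least $\lfloor \frac{n}{2}\rfloor+2$ cards, we conclude $L \geq \lfloor \frac{n}{2}\rfloor+2$.

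There is no real obstacle here: the statement is essentially a restatement of the observation, recorded just before it, that every common card must correspond to a vertex on the cycle of $G$. The only mild subtlety is to phrase the count in terms of multiplicities, so that it bounds the multiset intersection of the two decks rather than merely the number of distinct acyclic cards; but since distinct copies of a common card in the deck of $G$ are witnessed by distinct cycle vertices, the bound by $L$ is clear.
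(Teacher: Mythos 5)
Your argument is exactly the paper's: since $T$ is a tree every common card is acyclic, so each common card must come from deleting a vertex of the unique cycle of $G$, and counting these (with multiplicity, one cycle vertex per card) gives $L \geq \lfloor \frac{n}{2}\rfloor+2$. The proposal is correct and matches the paper's (very brief) justification, merely spelling out the multiset-counting step more explicitly.
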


Moreover, because every card contains at least $L-1$ vertices of this cycle and these vertices lie all in one path, we obtain the following: 

\begin{Obs} Every card of $G$ contains a path of length $L-1$.
\end{Obs}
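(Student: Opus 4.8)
The plan is to note that the statement follows immediately from the structure of $G$ established above, namely that $G$ is a connected unicyclic graph whose unique cycle $C$ has length $L$. I would fix an arbitrary vertex $w\in V(G)$ and exhibit, inside the card $G-w$, a path of length at least $L-1$ (reading ``a path of length $k$'' as ``a path on $k$ vertices'', consistently with calling $L$ the length of $C$), splitting into two cases according to whether $w$ lies on $C$.

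If $w\notin V(C)$, then removing $w$ together with its incident edges leaves every edge of $C$ untouched, so $C\subseteq G-w$; deleting a single edge of $C$ then produces a path on all $L$ of its vertices inside $G-w$. If $w\in V(C)$, then $C-w$ is a path on the $L-1$ remaining vertices of $C$, and this path is a subgraph of $G-w$. Either way $G-w$ contains a path of length at least $L-1$, which is the observation. (For the common cards $G-w=T-v$ that are used later in the argument only the second case can occur, since $T$ being acyclic forces $w\in V(C)$.)

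I do not expect a genuine obstacle here: the argument merely makes precise the half-sentence preceding the statement, that every card keeps at least $L-1$ vertices of the cycle and that these vertices lie on a single path. The only points worth a moment's care are that the bound $L-1$ is best possible uniformly over all cards --- already for $G=C_n$ the card $C_n-w$ is a path on exactly $n-1=L-1$ vertices, while if the cycle-neighbours of $w$ carry branches the path may be longer --- and that $L\geq\lfloor n/2\rfloor+2\geq 3$ by the preceding observation, so $L-1\geq 2$ and there is no degenerate case.
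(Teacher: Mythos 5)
Your proposal is correct and matches the paper's own (one-sentence) justification: every card retains at least $L-1$ vertices of the unique cycle, and these lie on a single path, which is exactly your two-case analysis made explicit. The reading of ``length'' as the number of vertices is also consistent with the paper's convention, so there is nothing to add.
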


Next, we will use a counting argument to show that there is a common card corresponding to a leaf in $T$.

\begin{Lemma}\label{card from leaf}
There is a common card $C = T-v = G-w$ such that $v$ has degree 1.
\end{Lemma}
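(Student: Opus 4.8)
The plan is to argue by contradiction using a single degree count in $G$. First I would record the arithmetic identity that links degrees across a common card. If $C = T-v = G-w$, then comparing the number of edges on each side gives $e(T) - \deg_T(v) = e(C) = e(G) - \deg_G(w)$, and since $e(T)=n-1$ and $e(G)=n$ (as $G$ is connected and unicyclic) this forces $\deg_G(w) = \deg_T(v)+1$. In particular, if the vertex $v \in V(T)$ realizing a common card is \emph{not} a leaf, then $\deg_T(v) \ge 2$, so the corresponding vertex $w\in V(G)$ has $\deg_G(w) \ge 3$.

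Next I would make explicit that having at least $k := \lfloor n/2\rfloor + 2$ common cards means (by definition of the common cards as a multiset intersection of the two decks) that there are \emph{distinct} vertices $v_1,\dots,v_k \in V(T)$ and \emph{distinct} vertices $w_1,\dots,w_k \in V(G)$ with $T - v_i \cong G - w_i$ for each $i$. Assume, towards a contradiction, that none of the $v_i$ is a leaf of $T$. Then by the previous paragraph $\deg_G(w_i) \ge 3$ for every $i$.

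Finally I would sum degrees over $V(G)$. Since $G$ is connected on $n\ge 3$ vertices, every vertex has degree at least $1$, and since $k \le n$ there are $n-k \ge 0$ vertices of $G$ outside $\{w_1,\dots,w_k\}$. Hence
\[
2n = 2e(G) = \sum_{w \in V(G)} \deg_G(w) \ \ge\ 3k + (n-k) \ =\ n + 2k \ =\ n + 2\left\lfloor \tfrac{n}{2}\right\rfloor + 4 \ \ge\ 2n + 3,
\]
which is absurd. Therefore at least one $v_i$ is a leaf of $T$, and the common card $C = T - v_i = G - w_i$ is the desired one.

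I do not expect a genuine obstacle here; the proof is a one-line counting argument. The only points requiring a little care are the bookkeeping that common cards really correspond to distinct vertices on each side (so the bound $\sum_i \deg_G(w_i) \ge 3k$ is legitimate and no vertex is double-counted) and checking the final inequality $2\lfloor n/2\rfloor + 4 \ge n+3$ in both parities of $n$.
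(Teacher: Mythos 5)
Your proof is correct and follows essentially the same route as the paper's: the edge-count identity $\deg_G(w)=\deg_T(v)+1$, the contradiction hypothesis forcing $\deg_G(w_i)\geq 3$ for all $\lfloor n/2\rfloor+2$ common-card vertices, and a degree sum over $V(G)$ using connectivity for the remaining vertices. Your explicit bookkeeping of distinctness and the parity check are fine but add nothing beyond the paper's argument.
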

\begin{proof}
Every common card $T-v=G-w$ satisfies $\deg(w)=\deg(v)+1$ as $e(G)=e(T)+1$. So if $\deg(v) \neq 1$ for all vertices $v$ of $T$ corresponding to a common card, then $\deg(w) \geq 3$ for all common card $G-w=T-v$. Hence, $G$ has at least $\lfloor\frac{n}{2}\rfloor+2$ vertices of degree at least $3$. Since $G$ is connected the degree of all the other vertices is at least 1. We get the following contradiction:  \[2e(G) \geq 3\left(\left\lfloor\frac{n}{2}\right\rfloor+2\right) + \left\lceil\frac{n}{2}\right\rceil - 2\geq 3\left(\frac{n}{2}+ 1\right)+\frac{n}{2}-2 = 2n+1 > 2n= 2e(G).\]
 \end{proof}

Next, we prove a lower bound for the number of leaves in $G$, which gives an upper bound for the number of vertices of degree at least $2$ outside the cycle of $G$. The expression $\Lf(X)$ denotes the number of leaves in a graph $X$.

\begin{Lemma}\label{min leaves}
 $G$ has at least $\frac{n}{4}-1$ leaves.
\end{Lemma}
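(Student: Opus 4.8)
The plan is to pin down $\Lf(G)$ by squeezing a third quantity, the number of leaves of $T$, between a lower and an upper bound, both read off from the at least $k:=\lfloor n/2\rfloor+2$ common cards, and then to solve the resulting two‑sided inequality. Throughout I will use what is already established: for a common card $T-v\cong G-w$ the vertex $w$ lies on the cycle of $G$, and $\deg_G(w)=\deg_T(v)+1$ because $e(G)=e(T)+1$.

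First I would get a lower bound on $\Lf(T)$. Sort the common cards by whether the tree-vertex $v$ is a leaf of $T$. If $v$ is not a leaf then $\deg_T(v)\ge 2$, so its partner $w$ is a cycle vertex with $\deg_G(w)\ge 3$, i.e.\ a cycle vertex that carries a nontrivial branch (the subtree of $G$ attached there). Deleting the cycle edges of $G$ decomposes $G$ into vertex‑disjoint trees, one rooted at each cycle vertex, so these branches are disjoint; each nontrivial one contains a leaf of $G$ different from its root, and different branches give different leaves. Hence there are at most $\Lf(G)$ cycle vertices of degree $\ge 3$, so at most $\Lf(G)$ common cards come from non-leaves of $T$, and the remaining (at least) $k-\Lf(G)$ common cards come from distinct leaves of $T$: $\Lf(T)\ge k-\Lf(G)$.

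Next I would get an upper bound on $\Lf(T)$, using the card guaranteed by Lemma~\ref{card from leaf}: there is a common card $T-v_0\cong G-w_0$ with $\deg_T(v_0)=1$, hence $\deg_G(w_0)=2$ and $w_0$ on the cycle. On the $T$-side, deleting the leaf $v_0$ removes $v_0$ from the leaf set and can only turn its unique neighbour into a new leaf, so $\Lf(T-v_0)\ge\Lf(T)-1$; on the $G$-side, deleting the degree-$2$ cycle vertex $w_0$ destroys no leaf (its two neighbours are on the cycle, so not leaves) and can create a new leaf only at each of those two distinct cycle neighbours, so $\Lf(G-w_0)\le\Lf(G)+2$. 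Since the two cards are isomorphic their leaf counts agree, giving $\Lf(T)\le\Lf(G)+3$. Combining the two bounds, $k-\Lf(G)\le\Lf(G)+3$, i.e.\ $\Lf(G)\ge\frac{k-3}{2}=\frac{\lfloor n/2\rfloor-1}{2}\ge\frac{n}{4}-1$.

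The only genuinely non-routine point is the structural claim underpinning the lower bound — that $G$ has at most $\Lf(G)$ cycle vertices of degree at least $3$ — which relies on $G$ being unicyclic, so that stripping off the cycle edges leaves a forest with exactly one cycle vertex per component; I would state and justify this carefully. Everything else is careful bookkeeping of which vertices can gain or lose leaf status when a single vertex is deleted.
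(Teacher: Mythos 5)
Your proof is correct and follows essentially the same strategy as the paper: both arguments hinge on the leaf-card from Lemma \ref{card from leaf} giving $\Lf(T)\leq \Lf(G)+3$, combined with the observation that common cards coming from non-leaves of $T$ force degree-$\geq 3$ vertices on the cycle of $G$, whose number is bounded by $\Lf(G)$. The only difference is cosmetic: the paper closes the argument with a degree-sum (handshake) contradiction using $2e(G)=2n$, whereas you derive the equivalent bound directly by exhibiting a distinct leaf in each of the disjoint branches; both rest on the same fact that a connected unicyclic graph has at least as many leaves as vertices of degree at least $3$.
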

\begin{proof}
We use proof by contradiction. If the statement is not true, then $G$ has at most $\frac{n}{4}-\frac{5}{4}$ vertices of degree 1, as the number of vertices of degree 1 is always an integer. Moreover, since $G$ is connected, $G$ does not contain any isolated vertices.

By Lemma \ref{card from leaf}, there exists a common card $C= G-w=T-v$ such that $v$ is a leaf in $T$, implying that $C$ is connected. Depending on whether $v$'s unique neighbor has degree 2, $C$  has either $\Lf(T)$ or $\Lf(T)-1$ leaves. Adding an extra vertex of degree $2$ to that card does not increase the number of leaves, but it can decrease the number of leaves by at most 2. Hence, $\Lf(T)-3 \leq \Lf(G)$ which implies that $\Lf(T) \leq \frac{n}{4}+\frac{7}{4}$.

Hence, at least $\frac{n}{4}-\frac{1}{4}$ of the common cards of $T$ correspond to a vertex of degree 2 or more. This indicates that $G$ has at least $\frac{n}{4}-\frac{1}{4}$ vertices of degree at least $3$ and the degree of all non-leaves is at least 2. Hence, we get the following contradiction:
\[2e(G) \geq 3\left(\frac{n}{4}-\frac{1}{4}\right)+ \left(\frac{n}{4}-\frac{5}{4}\right)+2\left(\frac{n}{2}+\frac{3}{2}\right) = 2n+1>2e(G).\]
 \end{proof}

\begin{Obs}
$G$ has at most $\frac{3n}{4}-L+1$ vertices of degree at least 2 outside its cycle.
\end{Obs}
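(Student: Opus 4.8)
The plan is to read this off directly from Lemma \ref{min leaves} by a trivial vertex count, so there is essentially no obstacle left once that lemma is in hand. First I would observe that $G$ has $n$ vertices total, and by Lemma \ref{min leaves} at least $\tfrac{n}{4}-1$ of them are leaves, i.e.\ have degree $1$. Hence the number of vertices of degree at least $2$ in all of $G$ is at most $n-\left(\tfrac{n}{4}-1\right)=\tfrac{3n}{4}+1$.

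Next I would split those vertices into the ones lying on the unique cycle of $G$ and the ones lying outside it. Every one of the $L$ cycle vertices has degree at least $2$ (each has its two cycle-neighbours), so all $L$ of them are already counted among the at most $\tfrac{3n}{4}+1$ vertices of degree $\geq 2$. Subtracting, the number of vertices of degree at least $2$ that lie outside the cycle is at most $\tfrac{3n}{4}+1-L=\tfrac{3n}{4}-L+1$, which is exactly the claimed bound.

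The only thing worth double-checking is that no leaf lies on the cycle — which is immediate since a cycle vertex has degree $\geq 2$ — so there is no double counting and the two parts of the count are genuinely disjoint. Thus the "hard part" was entirely absorbed into the proof of Lemma \ref{min leaves} (the degree-sum argument using the leaf-card of Lemma \ref{card from leaf}), and the Observation itself is a one-line corollary.
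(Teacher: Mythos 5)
Your proof is correct and is essentially the same counting argument as the paper's: both rest on Lemma \ref{min leaves} together with the fact that every leaf lies outside the cycle, and differ only in whether one subtracts the leaves from the $n-L$ vertices outside the cycle or subtracts the $L$ cycle vertices from the at most $\tfrac{3n}{4}+1$ vertices of degree at least $2$. No gap; the observation is indeed a one-line corollary.
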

\begin{proof}
There are exactly $n-L$ vertices outside the cycle, of which at least $\frac{n}{4}-1$ are leaves according to Lemma \ref{min leaves}. Thus, there are at most $n-L-\frac{n}{4}+1 = \frac{3n}{4}-L+1$ vertices of degree at least $2$ outside the cycle.
 \end{proof}

\subsubsection{Bounding the length of the cycle}
In this subsection, we will bound the length of the cycle from above. As mentioned in the proof overview, we begin by bounding the length of the longest path of $T$ in terms of the length $L$ of the cycle in $G$.

\begin{Lemma}\label{bound max path}
The longest path of $T$ has length at most $L+4$.
\end{Lemma}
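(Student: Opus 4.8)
The plan is to use the coloring in Figure \ref{Kleuring 1} to locate a vertex $w$ on the cycle of $G$ that is ``far'' from every large branch, show that the card $G-w$ therefore has a longest path not much longer than $L$, and then transfer this bound to $T$. First I would set up the coloring: for each vertex of degree $\ge 2$ lying in a branch of $G$, color red the two vertices on the cycle that are closest (one on each side, along the cycle) to the root of that branch, and color all remaining cycle vertices white. By the Observation preceding this subsection, there are at most $\frac{3n}{4}-L+1$ such degree-$\ge 2$ branch vertices, so at most $\tfrac{3n}{2}-2L+2$ cycle vertices are red. Since $L\ge\lfloor n/2\rfloor+2$, one checks that $\tfrac{3n}{2}-2L+2 < L$ (indeed roughly $\tfrac n2$ vs. $\tfrac n2$, and the constants work out), so at least one cycle vertex $w$ is white; moreover, combining with Lemma \ref{card from leaf} and a counting argument one can even arrange that the chosen white $w$ corresponds to a common card, i.e. $G-w=T-v$ for some leaf $v$ of $T$.

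Next I would argue that $G-w$ has a longest path of length at most $L+2$ (or a similarly small bound). The point of whiteness is that every large branch — one containing a vertex of degree $\ge 2$ — is rooted at a cycle vertex that is at distance $\ge 2$ along the cycle from $w$ on both sides; the branches rooted within distance $1$ of $w$ are bare paths (no vertex of degree $\ge 2$ in them), hence contribute nothing extra since a bare path hanging off the cycle near the deleted vertex cannot extend the long path by more than a controlled amount. A longest path $P$ in $G-w$ consists of a sub-path of the (broken) cycle together with at most two pendant pieces attached at its endpoints; since the cycle contributes at most $L-1$ vertices and the only branches that could lengthen $P$ significantly are the large ones, which are too far from $w$ to be reached without paying back the detour, $P$ has length at most $L-1$ plus a small constant. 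I would bound this carefully to get longest path in $G-w$ at most $L+2$.

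Finally I would pass back to $T$. Since $G-w=T-v$, the tree $T-v$ has a longest path of length at most $L+2$. Adding the single leaf $v$ back to $T-v$ to recover $T$ can increase the length of the longest path by at most $2$ (the new longest path either avoids $v$, or uses $v$ as one endpoint attached by one edge to a vertex of $T-v$ that was at one end of a long path — in the worst case gluing two long branches through $v$'s neighbor adds at most $2$ vertices). Hence the longest path of $T$ has length at most $L+4$, as claimed.

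\medskip
The main obstacle I expect is the middle step: making precise the claim that a white vertex's nearby branches are ``short enough'' not to lengthen the long path, and that the large far-away branches genuinely cannot be incorporated into a long path of $G-w$ without a net loss. This requires a careful analysis of how a longest path in $G-w$ can weave between the broken cycle and the branches — in particular ruling out paths that enter a branch, come back to the cycle, traverse part of it, and leave again — and it is where the precise choice of ``two red vertices per branch vertex, as close as possible to the root'' in the coloring is doing the real work. Getting the additive constant down to exactly $+4$ (rather than some larger constant) is the delicate part; the rest is bookkeeping with the degree count from Lemma \ref{min leaves}.
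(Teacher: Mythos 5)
Your plan is essentially the paper's proof: the same red/white coloring of the cycle (two red vertices per degree-$\ge 2$ branch vertex, placed as close to the root as possible), the same count of at most $\tfrac{3n}{2}-2L+2$ red vertices versus at most $L-\lfloor n/2\rfloor-2$ cycle vertices not yielding common cards to produce a white vertex $w$ with $G-w=T-v$ a common card, the bound of $L+2$ on the longest path of $G-w$, and the final $+2$ for restoring $v$ to the tree. Two remarks. First, the step you defer (``bound this carefully to get $L+2$'') is exactly where the paper's work lies: it places the red interval asymmetrically (distance $\le k-1$ on one side, $\le k$ on the other), so that a branch of height $h$ rooted at distance $a$ from an end of the blue path satisfies $h-a\le 1$ on one side and $h-a\le 2$ on the other; summing these two gains bounds every path --- including one that dives into a branch at each of its two ends, the case your ``sub-path of the broken cycle plus two pendant pieces'' picture must cover --- by $(L-1)+1+2=L+2$. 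Your worry that this is the delicate point is well placed, but the argument does close. Second, you do not need (and cannot in general arrange via Lemma \ref{card from leaf}) that $v$ is a leaf of $T$; what the whiteness of $w$ actually gives is that $G-w=T-v$ is one large component plus isolated vertices, and since $T$ is a tree, reattaching $v$ lengthens the longest path by at most $2$ regardless of $\deg(v)$, which is all the last step requires.
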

\begin{proof}

 We fix a (planar) drawing of $G$. For every branch rooted at a vertex $w$ in the cycle in $G$ and that has exactly $k$ vertices of degree at least $2$ next to its root, we color the vertices on the cycle that lie within distance $k-1$ of $w$ red. Additionally, we color the vertex lying in the anticlockwise direction at a distance $k$ also red. We do this for every branch connected to the cycle. If we want to color an already red node, we do nothing. See Figure \ref{Kleuring 1 zonder pijlen} for a small sketch of this coloring. The advantage of this coloring is that it distinguishes between vertices that lie close to some large branch and the ones that lie `far enough away' from any branch. 

 \begin{figure}[ht]
     \centering
     \includegraphics[width = 8 cm]{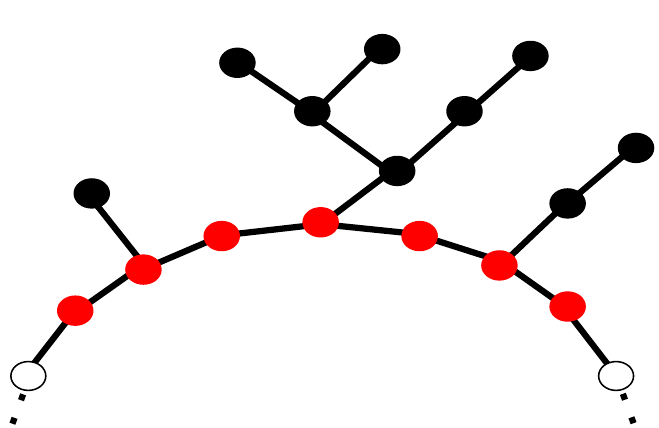}
     \caption{The coloring used in Lemma \ref{bound max path}. Every vertex of degree $\geq 2$ in a branch corresponds to at most two red vertices on the cycle that lie close to the root of the branch.}\label{Kleuring 1 zonder pijlen}
 \end{figure}
 
 Because the branches contain in total at most $\frac{3n}{4}+1-L$ vertices of degree at least $2$ and every such vertex corresponds to at most $2$ red vertices, there are at most $\frac{3n}{2}+2-2L$ red vertices in the cycle of $G$. Hence, there are at least $L-(\frac{3n}{2}+2-2L) \geq 3(L-\lfloor\frac{n}{2}\rfloor-2)+\frac{5}{2}$ white vertices on the cycle. Since there are exactly $L-\lfloor\frac{n}{2}\rfloor-2 \geq 0$ vertices on the cycle that do not correspond to a common card, there is at least one white vertex $w'$ corresponding to a common card $C=G-w'=T-v'$.

Now, we demonstrate that the card $C$ can only be in the deck of a tree if the longest path in that tree is at most $L+4$. Because no branch connected to $w'$ contains a vertex of degree at least $2$, the card $C$  consists of one large connected component and possibly also some isolated vertices. We color all vertices in $C$ that lie in the cycle of $G$ blue. Because $w'$ is white and every branch with a height of $k$ contains at least $k-1$ vertices of degree at least $2$, every such branch has a distance of at least $k-2$ from the right end of the blue path and of at least $k-1$ from the left end. Moreover, branches of height 1 extend the blue path by at most one at both ends. So the longest path is at most two longer than the blue path at the left end and at most one longer at the right end. Hence, the maximum path length on this card is at most $L-1+1+2=L+2$.

As $T$ is a tree, adding back $v'$ to $T$ only lets the largest component grow with $v'$ and the isolated vertices, which will become leaves. Therefore, the longest path will increase by at most two. Thus, the longest path in $T$ has a maximum length of $L+2+2=L+4$.
 \end{proof}

Since the longest path in $T$ is at most $L+4$, the length of every path on a common card is also at most $L+4$. We will use this to prove that there is some special common card $C$, such that the only vertices on a longest path $P$ corresponding to common cards are the vertices lying at one of the ends of $P$ (see Figure \ref{fig: goal Kleuring 2}). Since at most $\lfloor \frac{n}{2}\rfloor -2$ vertices of $G$ do not correspond to a common card, this observation leads to an upper bound for $L$.

\begin{Lemma}\label{bound cycle length}
The unique cycle in $G$ has a length of at most $\frac{n}{2}+19.5$.
\end{Lemma}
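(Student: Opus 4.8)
The plan is to deduce the bound on $L$ from a bound on the length $\ell$ of a longest path of $T$. By Lemma~\ref{bound max path} we have $\ell\le L+4$, and since every card of $G$ (hence every common card, hence $T$ itself) contains a path of length $L-1$, also $\ell\ge L-1$; so it suffices to prove $\ell\le\lceil n/2\rceil+O(1)$. I would obtain this by exhibiting many vertices $v'$ of $T$, all lying on one fixed longest path $Q$ of $T$, for which $T-v'$ is \emph{not} a common card, and then using the deck-overlap hypothesis: if $K$ vertices $v'$ of $T$ satisfy that $T-v'$ is isomorphic to no card of $G$, then the multiset of common cards has size at most $n-K$, so $\lfloor n/2\rfloor+2\le n-K$ and therefore $K\le\lceil n/2\rceil-2$. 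Showing $K\ge\ell-O(1)$ then forces $\ell\le\lceil n/2\rceil+O(1)$, and hence $L\le \tfrac n2+19.5$ after tracking constants.

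To produce these ``bad'' vertices I would first single out one especially convenient common card $C=G-w=T-v$. Using a coloring of the cycle of $G$ in the spirit of the one in Lemma~\ref{bound max path} (see Figure~\ref{Kleuring2} and Figure~\ref{fig: goal Kleuring 2}), I would mark every cycle vertex that lies close to a branch carrying many vertices of degree $\ge2$. Since the Observation following Lemma~\ref{min leaves} bounds the total number of degree-$\ge2$ vertices in branches by $\tfrac{3n}{4}-L+1$, only few cycle vertices get marked; counting these against the at most $L-\lfloor n/2\rfloor-2$ cycle vertices that do not correspond to common cards yields an unmarked vertex $w$ whose card $C$ is common. Being unmarked, $C$ consists of one large component (containing the cycle remnant, a path of length $\ge L-1$) together with isolated vertices, and along a longest path $P$ of $C$ every branch attached away from the two ends of $P$ has small height.

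For every vertex $v''$ of $P$ outside a bounded-length segment at each end, I would then argue that $T-v''$ is not a common card. Writing $T=C+v$ (so $v''\neq v$), deleting $v''$ splits the large component of $C$ into two subpaths of $P$ together with their attached branches; any path in $C-v''$ lies in one of these two pieces, hence has length at most (a subpath of $P$ on one side of $v''$) plus the heights of at most two branches hanging off it. Because the branches in the middle of $P$ are short and the total degree-$\ge2$ budget forbids two tall branches from coexisting, this length is strictly below $L-1$; re-inserting $v$ lengthens a longest path by at most $2$, so $T-v''$ still has no path of length $L-1$ and therefore cannot be a card of $G$. These vertices $v''$ form a set of at least $\ell-O(1)$ distinct points on a longest path of $T$, giving $K\ge\ell-O(1)$ and hence the desired bound.

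The main obstacle is the second and third steps together: the coloring must be calibrated so that (i) an unmarked cycle vertex still provably corresponds to a common card, which forces one to spend the leaf count of Lemma~\ref{min leaves} and the inequality $L\ge\lfloor n/2\rfloor+2$ efficiently, and (ii) breaking the longest path of $C$ anywhere in its long middle section genuinely destroys \emph{every} length-$(L-1)$ path, not merely the one through $v''$. This is where the interplay between the possible locations of tall branches, the height/degree-$\ge2$ trade-off, and the additive slack of $+4$ from Lemma~\ref{bound max path} must be balanced, and it is also what produces the somewhat unwieldy constant $19.5$.
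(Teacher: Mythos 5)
Your proposal follows essentially the same route as the paper's proof: use a gapped coloring of the cycle (Figure~\ref{Kleuring2}) to find a common card $G-w'=T-v'$ consisting of one large component plus isolated vertices, show that every vertex in the middle of the longest/blue path of that card yields a card $T-v''$ whose longest path falls below $L-1$ and hence cannot be a card of $G$, and then count the resulting $L-O(1)$ non-common cards against the at most $\lceil n/2\rceil-2$ allowed. The two-case analysis you sketch for paths in $T-v'-v''$ (paths stopping at $v''$ versus paths escaping into a branch of small height) is exactly the paper's argument, so the approaches coincide up to the calibration of constants you already flag.
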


\begin{proof} Let $w$ be an arbitrary vertex on the cycle of $G$.
If the branch connected to $w$ contains exactly $k$ vertices of degree at least $2$, we color the vertex $w$ itself red, and we also color red all the vertices on the cycle that lie at a distance in $[11,k+9]$. We have chosen this interval because it contains exactly $k-1$ vertices and the higher upper bound of $k+9$ ensures that the longest path in the branch is at least 9 shorter than the path corresponding to the cycle. So if $k=1$, we initially only color the vertex $w$ red. Moreover, we also color the vertex at a distance 10 to the left red. Hence, for every vertex of degree at least $2$ in a branch, we color at most two vertices on the cycle red. Similar to the proof of the previous lemma, there is at least one common card $G-w'=T-v'$ corresponding to a white vertex $w'$ in $G$. Since $w'$ is white, the common card $G-w'=T-v'$ consists of one large component and maybe some isolated vertices.

\begin{figure}[ht]
    \centering
    \begin{subfigure}[b]{0.5\textwidth}
     \centering
    \includegraphics[width = 7 cm]{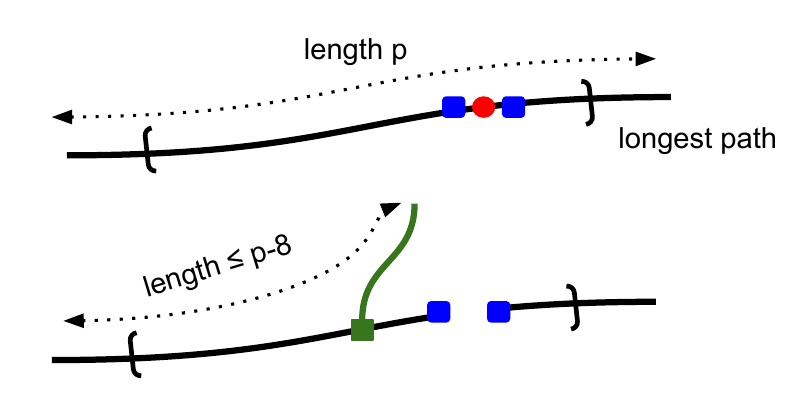}
    \caption{Find a vertex $w' \in V(G)$ such that not many vertices in $G-w' = T-v'$ can correspond to a common card. More specifically, we will show that the longest path in $T-v''$ will be too small for all vertices $v''$ in the middle of a longest path of $G-w' = T-v'$.}
    \label{fig: goal Kleuring 2}
     \end{subfigure}
     \hfill
     \begin{subfigure}[b]{0.48\textwidth}
         \centering
    \includegraphics[width = \textwidth]{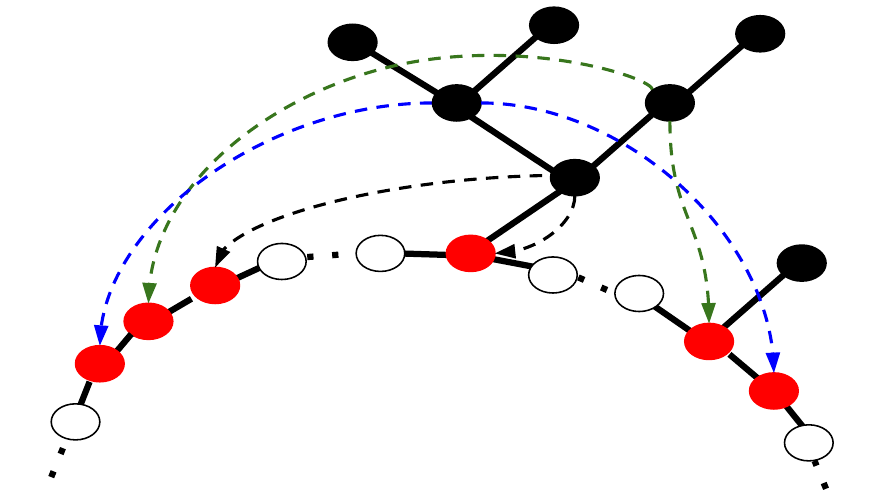}
    \caption{The coloring used in Lemma \ref{bound cycle length}. Every vertex of degree least $2$ corresponds to two red vertices on the cycle, but this time we have gaps of 9 and 10 white vertices between the vertex connected to the branch and the other red vertices.}
     \end{subfigure}
     \caption{The goal and a picture of the coloring in Lemma \ref{bound cycle length}.}\label{Kleuring2}
\end{figure}

Moreover, we color all points in $G-w'$ corresponding to a vertex on the cycle blue. For our argument, we do not need to know which vertices are colored blue, only that these vertices exist. Besides, let $P$ be a longest path of $G-w'=T-v'$.

First, we show that all except for maybe the 10 vertices on both ends of the blue path also lie in $P$. Suppose that this is not true. Because we have deleted a white vertex, every branch that lies at distance $d \geq 10$ of both ends of the blue path has a height of at most $d-8$, as every path of height $k$ in a branch contains at least $k-1$ vertices of degree at least $2$. So taking this branch results in a smaller path than following the blue path. Hence, these kinds of branches are not part of the longest path. Thus, $P$ contains all vertices on the blue path except for possibly the 10 vertices on one of the ends.

We will now show that all the vertices in the blue path, which are not in the last 10 vertices on either end, do not correspond to a common card. Let $v$ be a vertex on the blue path such that $v$ has a distance of at least 10 from both ends of the blue path. Our previous observation says that $v$ lies in the longest path $P$. Moreover, every path $Q$ in $T-v$ either stops at the vertex before $v$ or has taken a branch incident to a point before $v$.

First, we show that in $T-v'-v$, a path $Q$ that follows the blue path until it reaches the neighbor of $v$ is too small. Since both parts of $P$ looking from the vertex $v$ are as long as possible, the path $Q$ is smaller than or equal to the longest part of $P-v$. Moreover, both ends of $P$ from $v$ are at least as long as the respective ends of the blue path. Therefore, the path $P$ becomes at least 11 vertices shorter when we end the path $P$ at the vertex before $v$. Thus, $Q \leq P-11 \leq L+4-11=L-7$. By adding back the vertex $v'$ to $T-v'-v$, we can extend $Q$ only by $v'$ and some leaves (as $T$ is a tree and the other components only consist of isolated nodes). So, the extension of $Q$ in $T-v$ has length at most $L-5$.

Next, we show that every path $Q'$ in $T-v-v'$ that takes branch incident to a vertex $x$ such that $x$ has a distance of at least $10$ to both ends of the blue path in $T-v'$ is too small. By our choice of $w'$, the height of this branch is at most $d-8$, because otherwise, it would have at least $d-7-1=d-8$ vertices of degree at least $2$ implying that $w'$ was red.  This means that taking such a branch results in a path that is at least 8 vertices smaller than if we were to follow the blue path till the end. Moreover, as the longest path is at most 5 vertices longer than the blue path, we can only add at most 5 vertices in $G-w'$ to a path containing this branch by taking a branch near the other end of the blue path. This means that the length of such a path $Q'$ is at most $L-1-8+5=L-4$. As $T$ is a tree, adding back the vertex $v'$ makes the path $Q'$ at most 2 longer. So the extension of $Q'$ has length at most $L-2$ in $T-v$.

Combining these two steps, we find that the longest $T-v$ path has length at most $L-2$. Since every card of $G$ must contain a path of at least $L-1$, the card $T-v$ is not in the deck of $G$. This indicates that at most 20 vertices on the blue path in $T-v'$ can correspond to a common card. So at least $L-1-20=L-21$ vertices of $T$ do not correspond to a common card. Hence, $L-21 \leq \frac{n}{2}-1.5$, implying that $L \leq \frac{n}{2}+19.5$.
 \end{proof}

In particular, this result implies the following corollary.

\begin{Cor}\label{obs 18 vert}
     There are at most  18 vertices on the cycle of $G$ that do not correspond to a common card of $G$ and $T$.
\end{Cor}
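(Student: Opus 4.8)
The plan is to partition the vertices of the unique cycle of $G$ into those that yield a common card and those that do not, and to bound the size of the second set directly using Lemma~\ref{bound cycle length}. The key preliminary fact, already established above, is that since $T$ is a tree every common card of $G$ and $T$ is a forest, so the $G$-side vertex of any common card must lie on the cycle of $G$. Hence every common card is of the form $G-w$ for some $w$ among the exactly $L$ vertices of that cycle, and the cycle vertices split into the set $S$ of those $w$ for which $G-w$ belongs to the deck of $T$ and the complementary set of those that do not correspond to a common card.

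Next I would observe that, because $G$ and $T$ have at least $\lfloor n/2\rfloor+2$ common cards and each common card is witnessed by at least one cycle vertex lying in $S$ (distinct cycle vertices may of course give isomorphic cards, but this only enlarges $S$), we get $|S|\ge\lfloor n/2\rfloor+2$. Therefore the number of cycle vertices not corresponding to a common card is $L-|S|\le L-\lfloor n/2\rfloor-2$. Plugging in $L\le\frac n2+19.5$ from Lemma~\ref{bound cycle length} together with $\lfloor n/2\rfloor\ge\frac{n-1}{2}$ gives
\[
L-\left\lfloor\tfrac n2\right\rfloor-2\ \le\ \left(\tfrac n2+19.5\right)-\left(\tfrac{n-1}{2}+2\right)\ =\ 18 ,
\]
and since the quantity on the left is an integer this finishes the proof.

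There is essentially no obstacle here: all the work has already been done in Lemma~\ref{bound cycle length}, and the corollary is a one-line arithmetic consequence of it once one recalls that only cycle vertices of $G$ can produce common cards with a tree. The only points requiring a little care are the parity bookkeeping for $\lfloor n/2\rfloor$ and the harmless remark that isomorphic cards coming from different cycle vertices do not cause trouble, as they only make $|S|$ larger.
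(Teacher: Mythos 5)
Your proof is correct and matches the paper's intent exactly: the paper states the corollary as an immediate consequence of Lemma~\ref{bound cycle length}, and your argument simply writes out the one-line arithmetic (at most $L-\lfloor n/2\rfloor-2\le 18$ non-corresponding cycle vertices) that the paper leaves implicit.
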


Using this important corollary, we can bound the length of the branches:

\begin{Cor}\label{path branch}
The maximum height of a branch of $G$ is 14.
\end{Cor}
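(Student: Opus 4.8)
The plan is to combine the bound on the longest path of $T$ from Lemma~\ref{bound max path} with the bound on the number of ``bad'' cycle vertices from Corollary~\ref{obs 18 vert}. Suppose for contradiction that $G$ has a branch of height $h \ge 15$ rooted at a vertex $w_0$ on the cycle, and write the cycle as $w_0, w_1, \dots, w_{L-1}$ in cyclic order (indices read mod $L$). The point is that such a tall branch, together with a long arc of the cycle, produces a path on some card of $G$, and hence on some card of $T$, that is longer than anything $T$ can afford.

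First I would pick a common card corresponding to a cycle vertex close to $w_0$. Since $L$ is large, the $20$ vertices $w_{\pm 1}, w_{\pm 2}, \dots, w_{\pm 10}$ are pairwise distinct, and by Corollary~\ref{obs 18 vert} at most $18$ of all the cycle vertices fail to correspond to a common card; hence at least one of these $20$, say $w = w_j$ with $1 \le |j| \le 10$ (by symmetry take $1 \le j \le 10$), satisfies $G - w = T - v$ for some $v \in V(T)$. Next I would build a long path on this card: deleting $w_j$ turns the cycle into a path on $L-1$ vertices, and following it from $w_0$ along the arc $w_0, w_{-1}, w_{-2}, \dots, w_{j+1}$ visits $L-1-j$ cycle vertices beyond $w_0$ (this is the longer of the two arcs emanating from $w_0$, since $j \le 10 \ll L$). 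Concatenating this arc at $w_0$ with a longest root-to-leaf path of the branch, which contributes $h$ further vertices just as pendant structures extend the ``blue path'' in the proof of Lemma~\ref{bound max path}, yields a path on $h + 1 + (L-1-j) = L + h - j$ vertices in $G - w$, and therefore in $T - v$. Finally, since $T - v$ is a card of $T$, Lemma~\ref{bound max path} bounds its longest path by $L+4$ vertices, so $L + h - j \le L + 4$, that is $h \le j + 4 \le 14$, contradicting $h \ge 15$. Hence every branch of $G$ has height at most $14$.

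I do not expect a genuine obstacle here; the only thing requiring care is the choice of constants. The radius $10$ is essentially forced: $2 \cdot 10 = 20 > 18$ is precisely what guarantees a good cycle vertex within that distance of $w_0$, and this is exactly what pins the final bound at $14$ rather than a larger value, since a smaller radius need not contain a good vertex and a larger one would weaken the estimate. (With the sharper longest-path estimate indicated in the paper's remark one could push this constant down further, but the argument above already suffices for the corollary as stated.)
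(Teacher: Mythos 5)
Your argument is correct and is essentially the paper's own proof, just phrased contrapositively: the paper shows all $20$ cycle vertices within distance $10$ of the branch root would fail to be common cards (contradicting Corollary~\ref{obs 18 vert}), while you use Corollary~\ref{obs 18 vert} to extract one common card among those $20$ and then derive the same path-length contradiction via Lemma~\ref{bound max path}. The constants, the radius-$10$ window, and the path construction (long arc plus root-to-leaf path of the branch) all match the paper.
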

\begin{proof}
We use proof by contradiction. Suppose there is a vertex $w$ that has a branch with height at least 15. Given that the longest path on each common card is at most $L+4$, we can conclude that none of the vertices on the $G$ cycle with distance $d \in [1,10]$ of $w$ can correspond to a common card. For such vertices, we make a path that contains both the longest possible path in this branch and the largest part of the blue path. This new path has length at least $L-1+15 -(d-1) \geq L+5$, a contradiction. This indicates that at least $20$ vertices on the cycle do not correspond to a common card, which forms a contradiction with Corollary \ref{obs 18 vert}.
 \end{proof}

\begin{Cor}\label{path small component}
A path outside the largest component on a common card of $G-w=T-v$ has a length of at most 27.
\end{Cor}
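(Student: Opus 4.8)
\emph{Proof plan.} The plan is to pin down exactly what the components of such a card other than the largest one are, and then to bound their diameters using Corollary~\ref{path branch}.

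First I would recall that, as noted at the start of this section, every common card $C=G-w=T-v$ corresponds to a vertex $w$ lying on the unique cycle of $G$ (since $C$ is a forest, whereas deleting a vertex off the cycle leaves the cycle visible). Deleting such a $w$ from $G$ turns the cycle into a path on $L-1$ vertices; the component of $C$ containing this path also contains every branch rooted at a cycle vertex different from $w$, and — since $L-1\ge\lfloor n/2\rfloor+1$ while all other components together hold fewer than $n/2$ vertices — this is the largest component of $C$. In a unicyclic graph the vertex $w$ has exactly two cycle-edges, and each of its remaining edges leads to a branch attached to $w$ by a single edge, with distinct branches meeting only at $w$ (otherwise $G$ would contain a second cycle). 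Hence every component of $C$ other than the largest one is exactly such a branch $B$, now an isolated tree whose root $r$ is the former neighbour of $w$.

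Next I would bound a longest path $P$ inside such a branch $B$. By Corollary~\ref{path branch} the height of $B$ is at most $14$, i.e.\ every path in $B$ that starts at the root $r$ has at most $14$ vertices. Let $x$ be the vertex of $P$ closest (in $B$) to $r$; then $P$ splits at $x$ into two sub-paths $P_1,P_2$, and each $P_i$ concatenated with the $r$–$x$ path of $B$ is a path of $B$ starting at $r$, hence has at most $14$ vertices, so $|P_i|\le 14$. Therefore $|P|=|P_1|+|P_2|-1\le 14+14-1=27$, which is the claimed bound.

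The argument is essentially bookkeeping; the only step that genuinely needs care is the structural reduction in the second paragraph — verifying that a component outside the largest one really is a single branch rooted at $w$, so that Corollary~\ref{path branch} applies — which relies on $w$ lying on the cycle and on distinct branches of $w$ being otherwise disjoint. A small subtlety is the ``$-1$'' arising when the two halves of $P$ are glued at their common vertex $x$, which is exactly what pins the bound at $27$ rather than $28$.
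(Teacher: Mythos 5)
Your proposal is correct and follows essentially the same route as the paper: identify the small components of the card as branches of $G$, invoke Corollary~\ref{path branch} to bound their height by $14$, and conclude that any path inside a branch has at most $2\cdot 14-1=27$ vertices by splitting it at the vertex nearest the attachment point. Your write-up merely spells out the structural step (that the non-largest components are exactly the branches formerly attached to the deleted cycle vertex $w$) in more detail than the paper does.
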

\begin{proof} Small components in common cards only consist of vertices that are in one of the branches of $G$.
From the previous corollary, it follows that every branch has a height of at most 14. Therefore, every path consisting only of vertices in branches has a length of at most $2 \cdot 14-1=27$ as we only consider paths that do not contain the root, as they lie in the cycle of $G$.
 \end{proof}

\subsubsection{Proof of the main theorem}
Now, we have all the ingredients to prove the main theorem.

\maintrees*
\begin{proof}
First, we can determine whether a graph is connected from any $\lfloor \frac{n}{2}\rfloor+2$ of its cards. So we only need to examine the possibility that some tree $T$ and a connected unicyclic graph $G$ have at least $\lfloor \frac{n}{2}\rfloor+2$ cards in common.

Let $K$ be the maximum sum of the sizes of the branches incident to the same vertex in the cycle of $G$. Moreover, we color all vertices $w'$ for which this maximum is attained green. So a vertex $w$ of $G$ is green if and only if the largest component in $G-w$ has exactly $n-1-K$ vertices.

Lemma \ref{card from leaf} gives us a common card $T-v'=G-w'$ where $v'$ is a leaf and $w'$ is a vertex on the cycle of degree~$2$. Hence, this common card $C$ is connected.
We again color all vertices in $T-v'=G-w'$ that lie in the cycle of $G$ blue. If we add the leaf $v'$ back to $T-v'$, exactly one branch becomes larger. So there is at most one blue vertex for which the sum of the branches is at least $K+1$. This implies that there is at most one non-blue vertex in $T$ such that the largest component has exactly $n-1-K$ vertices.

If we delete vertices from the blue path, we observe that the largest component of $T$ becomes strictly smaller for every vertex we move closer to the center (until we reach the middle). Because we can approach the center from (only) two sides, both the endpoints of the blue path, there are at most 2 blue vertices such that the largest component has exactly $n-1-K$ vertices. In conclusion, there are at most 3 cards of $T$ such that the largest component has exactly $n-1-K$ vertices. 

This also implies that there are at most 3 such common cards. So at most 3 green vertices of $G$ can correspond to a common card.  Since there are at most 18 vertices in the cycle of $G$ that do not correspond to a card, there are at most $21$ green vertices in the cycle of $G$.

Let $n \geq 5000$, then $L \geq \frac{n}{2} +2> 2500$. Because there are at most 21 green vertices, the pigeonhole principle tells us that there is a `gap' between two green vertices consisting of at least $\frac{2500-21}{21} > 118$ adjacent non-green vertices on the cycle. Of these 118 vertices, at most 18 do not correspond to a common card.

Let $w'$ be one of the vertices in this gap of $118$ vertices such that there are at least 29 vertices on both sides of $w'$ in this gap.  There are at least $118-2 \cdot 29 = 60$ such vertices $w'$ and at least $60-18=42$ of them correspond to a common card. We will show that the green vertices in $G-w'=T-v'$ are exactly those vertices in the large component that split the large component into two components that have a path of length (at least) $29$ and exactly $K$ other vertices. 

Since $w'$ lies on the blue path and the distance between $w'$ and any green vertex is at least $29$, the statement holds for all green vertices. Let $w''$ be a vertex that satisfies this condition, we will show that $w''$ is green. By Corollary \ref{path small component} all paths consisting only of vertices in the branches have a length of at most 27. This means that both components with a path of length $29$ must contain some blue vertices. Hence, $w''$ lies on the blue path and thus on the cycle of $G$. Moreover, all the extra vertices in components without a path of length at least 29, were part of a branch of $w''$. As the green vertices are the only ones with (at least) $K$ vertices in the adjacent branches we conclude that $w''$ must have been green. Therefore, we can identify the green vertices in $G-w'=T-v'$.

Because the maximum path in a branch has size at most 27, we know that $T$ has no `fake green' vertices that do not lie on the main path i.e. $T$ has no vertices in a branch such that the largest component splits into two components of size at least 29 and $K$ extra vertices. Hence, in $T$ there is a path through all green vertices. Let $D_1$ and $D_2$ (with $D_1 \leq D_2$) be the maximum number of vertices before the first green vertex and after the last green vertex in a path through all green vertices.

For every vertex $w'$ in the gap of distance at least 30, we define $e_1^{w'}$ and $e_2^{w'}$ (such that $e_1^{w'} \leq e_w^{w'}$) to be the number of vertices between $w'$ and two green vertices at the boundary of the gap. By definition,  $e_1^{w'}+e_2^{w'}$ equals the size of the gap minus one. Since all the green vertices lie on the cycle of $G$, there is a longest path $P$ in $G-w'=T-v'$ through all the green vertices. Then we define $d_1^{w'}$ and $d_2^{w'}$ (with $d_1^{w'} \leq d_2^{w'}$) to be the maximum number of vertices before the first green vertex or after the last green vertices for a path through all green vertices in $G-w'=T-v'$.

As the blue path goes through all the green nodes and the longest path in $T$ is at most 5 longer than the blue path, we see $d_1^{w'1}+d_2^{w'} \leq e_1^{w'}+e_2^{w'}+5$ and $e_i^{w'} \leq d_i^{w'} \leq e_i^{w'}+5$ for $i=1,2$. We can define these numbers for all $60$ possible $w'$. 

If we add the missing vertex $v'$ in $T-v'$ back at most one of $d_1^{w'}$ and $d_2^{w'}$ can change. Since the updated values of $d_1^{w'}$ and $d_2^{w'}$ must be equal to the `true' values $D_1,D_2$ in $T$, we see that $d_i^{w'}= D_j$ for some $i,j \in \{1,2\}$. Hence, 
\[e_i^{w'} \leq D_j \leq e_i^{w'}+5.\]
Moreover, $e_1^{w'}+e_2^{w'} \leq D_1+D_2 \leq e_1^{w'}+e_2^{w'}+5$. Combining these results gives $D_{3-j} - 5 \leq  e_{3-i}^{w'} \leq D_{3-j}+5$. Because $D_1 \leq D_2$ and $e_1 \leq e_2$, we get that $e_1^{w'} \in [D_1-5,D_1+5]$. Hence, the minimum distance to a green vertex differs by at most 10 
over all $w'$. 

However, for every integer $n$ in $[30,83]$, there are exactly two vertices $w'$ in the gap with minimal distance $n$ to a green vertex. Since there are at most 18 vertices that do not correspond to a card, there is such a vertex $w'$ corresponding to a common card with a minimal distance to a green vertex in the interval $[30,39]$ and one vertex $w''$ in the interval $[50,59]$. These two vertices have a difference in minimal distance of at least $50-39=11>10$, a contradiction.
 \end{proof}

 \begin{Remark}
     We can slightly improve on this upper bound by considering a tree $T$ and a graph $G$ with $\frac{n}{2}-k$ common cards. For these graphs, we can do (roughly) the same proof, to get a contradiction. The main difference is that we apply the colorings only for $\ell-(2k+2)$ vertices for every branch of size $\ell$. Then we get a contradiction, whenever $n> 6 000k^2 +24 000k+ 24 000$. In particular, for $k=\frac{\sqrt{n}}{80}$ we get a contradiction as long as $n>70 000$.
 \end{Remark}

\section{Adversary recognition for forests}\label{chapter: forest}
In the previous section, we have shown that, for $n$ large enough, any $\lfloor\frac{n}{2}\rfloor+2$ cards determine whether a graph $G$ is a tree. Next, we want to investigate how many cards we need to determine whether a graph is a forest i.e. whether the graph contains a cycle. 

We start by proving a lower bound of $\lfloor\frac{n}{2}\rfloor+1$ by giving three families of pairs of a forest $F$ and an unicyclic graph $G$ that have $\lfloor\frac{n}{2}\rfloor+1$ common cards. In order to state these families, we use the following notation: $C_k$ stands for the cycle on $k$ vertices, $P_k$ for the path on $k$ vertices, $K_2$ for an isolated edge, and $K_1$ for an isolated vertex. Moreover, we need the following definition.
\begin{Def}
    Let $G$ be any graph, then the $\mathrm{star \, of } \, G$, denoted $S_1[G]$, is the graph $G$ with an extra leaf connected to each of its vertices. If $G$ is vertex-transitive, then $S_1[G]'$ is the graph $S_1[G]$ minus any leaf.
\end{Def}
The requirement in the second part that the graph $G$ must be vertex-regular is necessary as otherwise the graph $S_1[G]'$ is not well-defined. 

\begin{Obs}\label{three families}
There are (at least) three infinite families of pairs consisting of a forest $F$ and a unicyclic graph $G$ that have $\lfloor \frac{n}{2}\rfloor+1$ common cards.
\begin{itemize}
    \item Family 1: $F_{2k+1}= P_{k} \cup  (k+1)K_1$ and $G_{2k+1} = C_{k+1} \cup k K_1$ (see Figure \ref{family 1})
    \item Family 2: $F_{4k-1}=P_{2k-1}\cup k K_2$ and $G_{4k-1}=C_{2k} \cup (k-1)  K_2 \cup K_1$ (see Figure \ref{family 2}).
    \item Family 3: $F_{2k+1}=S_1[P_k] \cup K_1$ and $G_{2k+1}= S_1[C_{k+1}]'$ (see Figure \ref{fig:forest2}). 
\end{itemize}
\end{Obs}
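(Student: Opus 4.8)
The plan is to treat the three families uniformly in two steps: a cheap upper bound of $\lfloor\frac{n}{2}\rfloor+1$ common cards, followed by an explicit construction of that many. The upper bound is immediate: in each family $G$ is unicyclic and a short check shows its cycle has length exactly $\lfloor\frac{n}{2}\rfloor+1$ (it is $C_{k+1}$ when $n=2k+1$, in Families 1 and 3, and $C_{2k}$ when $n=4k-1$, in Family 2). Since $F$ is a forest, every common card is a forest, and $G-w$ is a forest exactly when $w$ lies on the cycle of $G$; hence the multiset of forest cards of $G$ has size exactly $\lfloor\frac{n}{2}\rfloor+1$, so $G$ and $F$ share at most that many cards.

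For the lower bound it suffices to embed, as a multiset, the cards $G-w$ with $w$ on the cycle into the deck of $F$. In Family 1 every such card of $G_{2k+1}$ equals $P_k\cup kK_1$, which is exactly what one gets by deleting any one of the $k+1$ isolated vertices of $F_{2k+1}=P_k\cup(k+1)K_1$; so its multiplicity in the deck of $F$ is at least $k+1$. In Family 2 every such card of $G_{4k-1}$ equals $P_{2k-1}\cup(k-1)K_2\cup K_1$, which is exactly what one gets by deleting any one of the $2k$ vertices lying in a copy of $K_2$ in $F_{4k-1}=P_{2k-1}\cup kK_2$; so its multiplicity is at least $2k$. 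In both cases we read off the desired $\lfloor\frac{n}{2}\rfloor+1$ common cards.

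Family 3 requires slightly more care, since its cycle cards are not all isomorphic. I would label the cycle of $G_{2k+1}=S_1[C_{k+1}]'$ as $u_1,\dots,u_{k+1}$ with a pendant leaf $\ell_i$ at $u_i$ for $i\le k$ and $u_{k+1}$ leafless. Deleting $u_{k+1}$ yields $S_1[P_k]$, while for $i\le k$ deleting $u_i$ yields a $k$-vertex path carrying pendant leaves at every position except position $k+1-i$, together with the now-isolated vertex $\ell_i$. On the $F$ side write $F_{2k+1}=S_1[P_k]\cup K_1$ with path $a_1,\dots,a_k$, pendant leaf $b_j$ at $a_j$, and isolated vertex $c$: deleting $c$ yields $S_1[P_k]$, and deleting $b_j$ yields the $k$-vertex path with pendant leaves at every position except $j$, together with $c$. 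Thus $u_{k+1}\mapsto c$ and $u_i\mapsto b_{k+1-i}$ is a bijection sending each cycle card of $G$ to an isomorphic card of $F$, so the multiset of cycle cards of $G$ again embeds into the deck of $F$, yielding $k+1$ common cards.

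The only mildly fussy point is the bookkeeping in Family 3: the matching must respect the reversal symmetry of the path so that multiplicities agree, and one should record that no further cards coincide, so that the count is exactly $\lfloor\frac{n}{2}\rfloor+1$ rather than merely at least that. This last check is quick by comparing the size of the largest component: every forest card of $G_{2k+1}$ has a component with at least $2k-1$ vertices, whereas deleting an internal or endpoint path vertex of $F_{2k+1}$ leaves all components of size at most $2(k-1)<2k-1$, so such cards of $F$ never appear in the deck of $G$. I expect this Family-3 accounting, rather than any genuine difficulty, to be the main thing to get right.
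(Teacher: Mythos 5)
Your proposal is correct and follows essentially the same route as the paper, which merely observes that common cards must come from the length-$\lfloor n/2\rfloor+1$ cycle of $G$ and from the corresponding end/leaf/isolated vertices of $F$, and otherwise defers to the figures. Your write-up just makes explicit the matching of multiplicities (in particular the reversal symmetry in Family 3) and the exactness of the count, which the paper leaves to the reader.
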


\begin{figure}[ht]
\centering
\begin{subfigure}[b]{0.5 \textwidth}
    \centering
    \includegraphics[width = 6.5 cm]{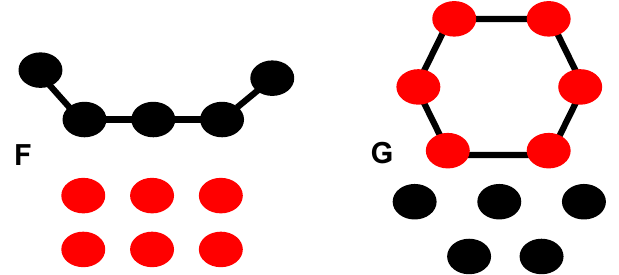}
    \caption{An example of the first family ($k=5$): The forest $F$ is a path with some isolated vertices and a unicyclic graph $G$ is a cycle with some isolated vertices.}\label{family 1}
\end{subfigure}
\hfill
\begin{subfigure}[b]{0.48 \textwidth}
    \centering
    \includegraphics[width = 6.5 cm]{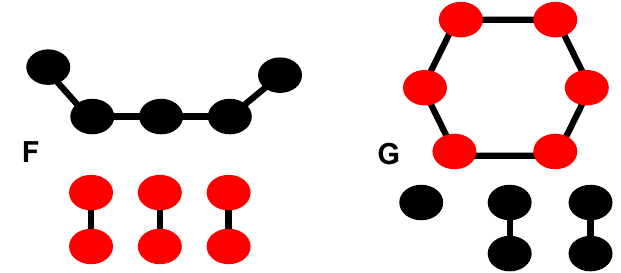}
    \caption{An example of the first family ($k=3$): The forest $F$ is a path with some isolated edges and a unicyclic graph $G$ is a cycle with some isolated edges and one isolated vertex.}\label{family 2}
\end{subfigure}
\caption{All red vertices correspond to common cards of $F$ and $G$, that are all isomorphic.}\label{fig:forest1}
\end{figure}

\begin{figure}[ht]
    \centering
    \includegraphics[width = 10 cm]{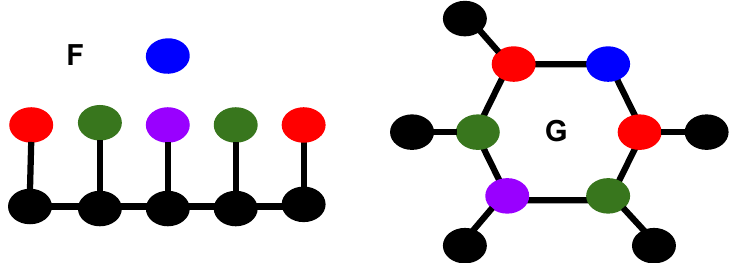}
    \caption{An example corresponding to Family 3 ($k=5$): The forest $F$ is the star of $P_5$ with an isolated vertex, while $G$ is the star of $C_6$ minus one leaf. All vertices of the same color (not black) correspond to the same cards.}\label{fig:forest2}
\end{figure}

\begin{Remark} These families were already known examples of pairs that have $\lfloor \frac{n}{2}\rfloor+1$ common cards.  For example, Family 3 is a special case of Theorem 3.4 from `Families of Pairs of Graphs
with a Large Number of
Common Cards' \cite{bowler2010families}. \end{Remark}

To obtain the common cards of $F$ and $G$, we always must delete a vertex from the unique cycle of size $\lfloor \frac{n}{2} \rfloor+1$ of $G$ as common cards do not contain any cycles. Moreover, they correspond to vertices outside the middle $\lfloor \frac{n}{2} \rfloor$ vertices of the longest path of $F$ as the longest path on each card is at least $\lfloor \frac{n}{2}\rfloor$. Figures \ref{fig:forest1} and \ref{fig:forest2} illustrate this process. 

This means that we have found a lower bound of $\lfloor \frac{n}{2}\rfloor+2$ for determining whether a graph is a forest. In this next section, we prove that, for $n$ large enough, the lower bound is in fact strict.

\subsection{Proof of the upper bound}
For the proof of the upper bound, we will use a case distinction based on the following properties of forest $F$ and unicyclic graph $G$: the number of connected components (denoted by respectively $\kappa_F$ and $\kappa_G$), and the size of the largest connected component (denoted by respectively $M_F$ and $M_G$). We start by proving a lemma that gives us a relation between $\kappa_F$ and $\kappa_G$.

\begin{Lemma}\label{relation K_F}
If a forest $F$ and a non-forest $G$ have at least $\lfloor \frac{n}{2} \rfloor+1$ common cards, then $\kappa_F \leq \kappa_G+1$.
\end{Lemma}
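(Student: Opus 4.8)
The plan is to combine an edge count with the structural fact that $G$ must be unicyclic, and then derive a contradiction from the assumption $\kappa_F \geq \kappa_G + 2$. First I would dispose of the degenerate range: for $n \leq 3$ the only non-forest is $C_3$, which is unicyclic and for which the claim is checked by hand, so I assume $n \geq 4$ and hence $\lfloor \tfrac{n}{2}\rfloor + 1 \geq 3$. Since $F$ is a forest, every common card is a forest and so contains no cycle; by Lemma~\ref{max one cycle} a non-forest with two or more cycles has at most $2 < \lfloor \tfrac{n}{2}\rfloor+1$ common cards with $F$, so $G$ has exactly one cycle $C$. A graph with $\kappa_G$ components and exactly one cycle has $e(G) = n - \kappa_G + 1$ edges, while $e(F) = n - \kappa_F$.

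Next I would record the two facts about an arbitrary common card $F - v = G - w$ that drive the proof. Being a forest, the card cannot contain $C$, so $w$ must lie on $C$; consequently there are at least $\lfloor \tfrac{n}{2}\rfloor + 1$ distinct vertices $w$ of $G$, all lying on $C$, that correspond to common cards. Comparing the edge counts of the two (isomorphic) cards, $e(F) - \deg_F(v) = e(G) - \deg_G(w)$, which rearranges to the identity $\deg_G(w) = \deg_F(v) + \kappa_F - \kappa_G + 1$.

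Now suppose for contradiction that $\kappa_F \geq \kappa_G + 2$. Then the identity forces $\deg_G(w) \geq \deg_F(v) + 3 \geq 3$ for every vertex $w$ corresponding to a common card. Each such $w$ lies on $C$ and so uses exactly two of its incident edges on $C$, leaving at least $\deg_G(w) - 2 \geq 1$ edges that enter the branches hanging off the cycle. Because $G$ is unicyclic, $G - E(C)$ is a forest in which every component meets $V(C)$ in exactly one vertex, so the branches rooted at distinct cycle vertices are vertex-disjoint and contain no vertex of $C$. Hence the at least $\lfloor \tfrac{n}{2}\rfloor + 1$ cycle vertices that correspond to common cards account for at least $\lfloor \tfrac{n}{2}\rfloor + 1$ further, pairwise distinct, vertices lying off $C$. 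Adding the cycle vertices themselves gives $n \geq 2\bigl(\lfloor \tfrac{n}{2}\rfloor + 1\bigr) \geq n+1$, a contradiction; therefore $\kappa_F \leq \kappa_G + 1$.

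The argument is essentially a double count, so there is no deep obstacle; the two points needing a little care are the edge count $e(G) = n - \kappa_G + 1$, which relies on Lemma~\ref{max one cycle} to know that $G$ is unicyclic, and the claim that the branches at different cycle vertices are vertex-disjoint and avoid $C$, which is an immediate consequence of $G$ having a single cycle. The degenerate cases $n \leq 3$ should also be kept in mind, but they are verified directly.
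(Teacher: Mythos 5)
Your proof is correct, and it reaches the conclusion by a different route than the paper. The paper's argument is direct: it counts that the cycle of $G$ has at least $2L-n$ vertices of degree $2$, so many that at least one of them, say $w$, must correspond to a common card $G-w=F-v$; since deleting a degree-$2$ cycle vertex does not change the number of components, $\kappa_{G-w}=\kappa_G$, and then $\kappa_F\leq\kappa_{F-v}+1=\kappa_{G-w}+1$ gives the bound immediately. You instead argue by contradiction: assuming $\kappa_F\geq\kappa_G+2$, the edge-count identity $\deg_G(w)=\deg_F(v)+\kappa_F-\kappa_G+1$ forces every common-card vertex of $G$ to have degree at least $3$ on the cycle, and the disjointness of branches then overcounts $V(G)$. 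The two proofs rest on the same underlying fact (a unicyclic graph cannot have $\lfloor n/2\rfloor+1$ cycle vertices of degree $\geq 3$), but the paper exploits it through a single well-chosen card while you exploit it through a global vertex count; the paper's version has the advantage that the degree-$2$ common card it produces is reused later (in the proof of Theorem~\ref{main forest}), whereas yours is self-contained and makes the role of the quantity $\kappa_F-\kappa_G$ more transparent. One cosmetic slip: when $G$ is disconnected, not every component of $G-E(C)$ meets $V(C)$; what you need (and what is true) is only that each component meets $V(C)$ in \emph{at most} one vertex, so branches rooted at distinct cycle vertices are disjoint. Your handling of $n\leq 3$ is fine, since $K_3$ is the only non-forest there.
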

\begin{proof}
As the graph $G$ is not a forest, $G$ contains a cycle. Lemma \ref{max one cycle} tells us that $G$ has exactly one cycle. As common cards do not contain cycles, every vertex corresponding to a common card must lie in the unique cycle because this cycle and thus the length $L$ of this cycle satisfies $L \geq \lfloor \frac{n}{2} \rfloor+1$.

As $G$ is unicyclic, there are at least $L-(n-L) \geq 2(L-\lfloor\frac{n}{2}\rfloor - 1) +1$ vertices of degree 2 in the cycle. Since at most $L-\lfloor\frac{n}{2}\rfloor - 1$ vertices on the cycle do not correspond to a common card, there is a common card $G-w=F-v$ such that $\deg(w)=2$.

As $G-w=F-v$ and subgraphs of forests are forests, the card $G-w$ is also a forest. Moreover, the connected components of $G-w$ are the same as $G$ except for the one with the cycle (which is still one connected component as we deleted a node of degree 2). In particular, $\kappa_{G-w} = \kappa_G$. Because $G-w=F-v$, we also have that $\kappa_{F-v}=\kappa_{G-w}$. At last, $\kappa_{F} \leq \kappa_{F-v}+1= \kappa_G + 1$ as a card $F-v$ only contains fewer components than $F$ if $v$ is an isolated vertex and in that case, the number of components decreases by exactly one. 
\end{proof}

To prove the main theorem, we will use a case distinction based on the difference between $\kappa_F$ and $\kappa_G$. For two of these cases, we will need to use subcases which are determined by $M_F$ and $M_G$, the sizes of the largest component of $F$ and $G$.
\mainforest*
\begin{proof}
We will use proof by contradiction. Let $F$ be a forest and $G$ be a non-forest that have at least $\lfloor\frac{n}{2}\rfloor+2$ common cards. Then by Lemma \ref{max one cycle} $G$ has exactly one cycle and we again have that every common card must correspond to a vertex in the cycle of $G$. Hence, this cycle has length at least $\lfloor\frac{n}{2}\rfloor+2$ implying that the largest component of $G$ is at least two larger than the next largest component.

By Lemma \ref{relation K_F}, the graphs $F$ and $G$ satisfy $\kappa_F \leq \kappa_G+1$. Moreover, $e(F)=n-\kappa_F$ as $F$ is a forest and $e(G)=n+1-\kappa_{G}$ because $G$ contains exactly one cycle. Furthermore, the proof of Lemma \ref{relation K_F} tells us that there exists a common card $G-w=F-v$ such that $w$ has degree 2. This cards satisfies $M_{F} \geq M_{F-v}=M_{G-w} = M_G-1$ implying $M_F \geq M_G-1$.

For the remainder of the proof, we will distinguish four cases, where the second and third case are each split into two subcases. We will show that for every case there are no forests $F$ and unicyclic graphs $G$ that have $\lfloor \frac{n}{2}\rfloor+2$ common cards.\\

Case 1: $\kappa_F=\kappa_G+1$ implying  $e(F)=n-\kappa_F=n-\kappa_{G}-1=e(G)-2$.\\
In this case, every common card that corresponds to a vertex of degree $2$ in $G$ corresponds to an isolated vertex in $F$. Let $C$ be the number of common cards of this form. Then $F$ has at least $C$ isolated vertices implying that these common cards have at least $C-1$ isolated vertices. Since deleting a vertex of degree 2 from the cycle of $G$ does not increase the number of isolated vertices, $G$ has also at least $C-1$ isolated vertices.
  
Furthermore, for every vertex of degree $d \geq 3$ in the cycle of $G$, there is at least one extra vertex in the largest component outside the cycle connected to that vertex. Since there are $\lfloor \frac{n}{2}\rfloor+2-C$ common cards corresponding to a node of degree $d \geq 3$, $G$ has at least $\lfloor \frac{n}{2}\rfloor +2 + (\lfloor \frac{n}{2}\rfloor +2-C)+C-1\geq n+2$ vertices, a contradiction. \\

Case 2:  $\kappa_F= \kappa_G$ implying $e(F)=e(G)-1$.\\
For a common card $G-w=F-v$ where $w$ has degree 2, the missing vertex $v$ has degree 1. Thus, $M_F=M_{F-v}+1$ or $M_F=M_{F-v}$, implying $M_F=M_G$ or $M_F=M_G-1$.\\
Case 2a: $\kappa_F= \kappa_G$  and $M_F=M_G-1$.\\
In this case, every common card corresponding to a vertex of degree 2 in $G$, corresponds to a vertex of degree 1 outside the main component in $F$. So if $C$ is the number of common cards corresponding to a vertex of degree 2, $F$ has at least $C$ vertices outside its main component. As $M_F=M_G-1$, $G$ has at least $C-1$ vertices outside its main component. Moreover, for every vertex of degree at least 3 on the cycle of $G$, there is at least one vertex in the main component of $G$ outside the cycle. This means that $G$ has at least
$\lfloor \frac{n}{2}\rfloor +2 + (\lfloor \frac{n}{2}\rfloor +2-C)$ vertices in its main component. Hence, $G$ must have more than $n$ vertices, a contradiction.\\

Case 2b: $\kappa_F= \kappa_G$  and $M_F=M_G$.\\
As we always delete a vertex from the largest component of $G$ and $M_F=M_G$, we also always delete a vertex from the largest component of $F$. There is at least one common card corresponding to a vertex of degree $2$ in $G$ and to a leaf of the main component in $F$. As deleting this vertex in $G$ does not create any extra small components (and neither does deleting any vertex of degree $1$ from the largest component of $F$),  all small components of $F$ and $G$ must be isomorphic. As we only delete nodes from the largest components of $F$ and $G$, these small components are always visible on common cards. Hence, the largest component of $F$ (which is a tree) and the largest component of $G$ (which is a connected unicyclic graph) have at least $\lfloor \frac{n}{2}\rfloor+2$ common cards, while their size is at most $n$. This forms a contradiction with Theorem \ref{main thm trees}.\\

Case 3: $\kappa_F=\kappa_G-1$ implying $e(F)=e(G)$.\\
Case 3a: $\kappa_F=\kappa_G-1$ and $M_F=M_G-1$.\\
In this case, every common card corresponding to a vertex of degree 2 in $G$ corresponds to a vertex of degree 2 outside the large component in $F$. Because there is at least one common card corresponding to a vertex of degree $2$ in $G$, this means that $F$ has at least one vertex of degree 2 outside its main component. However,  $F$ is a forest, so this also means that $F$ has at least 2 extra vertices of degree $1$ outside its main component.

On the other hand, for every common card corresponding to a vertex of degree $3$ in $G$, there must be at least one new vertex of degree 1 outside the cycle connected to that vertex. With this bijection, we can show, in a similar way as in case 2a, that $G$ must have more than $n$ vertices to derive a contradiction.\\

Case 3b: $\kappa_F=\kappa_G-1$  and $M_F > M_G-1$.\\
As every common card $G-w=F-v$ corresponds to a vertex $w$ in the largest component of $G$ and $M_F > M_G-1$, the vertex $v$ must lie in the largest component of $F$. Moreover, since $w$, and therefore also $v$, always have degree at least 2, we have $M_{F} \geq M_{F-v}+2 = M_{G-w}+2=M_{G}+1$. Therefore, there exists a connected graph $H$ with at most $\lceil\frac{n}{2}\rceil-2$ vertices such that $G$ contains more small connected components of $H$ than $F$. 

As all cards correspond to a vertex in the main component of $G$, all the copies of $H$ are visible on every common card. As every common card corresponds to a vertex in the largest component $M_F$ from $F$, this largest component $M_F$ of $F$ has at least $\lfloor \frac{n}{2}\rfloor+2$ cards that contain a component isomorphic to $H$. This forms a contradiction with Lemma \ref{component H in connected} from the Appendix.\\

Case 4: $\kappa_F < \kappa_G-1$, so $e(F)>e(G)$. \\
As all cards of $G$ correspond to a vertex of degree $
\geq 2$, the forest $F$ has at least $\lfloor \frac{n}{2}\rfloor +2$ vertices of degree $\geq 3$. Since every connected component of $F$ is a tree and a tree with $k$ vertices of degree $\geq 3$ has at least $k+2$ leaves, $F$ has at least $\lfloor\frac{n}{2} \rfloor +4$ vertices of degree leaves. This means that $F$ must have more than $n$ vertices, a contradiction.
\end{proof}

This concludes the proof that  forests and non-forests have at most $\lfloor \frac{n}{2}\rfloor+1$ common cards. Moreover, this bound is strict since we already have found three infinite families of pairs of forests and non-forests for which this bound is attained.  

\begin{Remark} The requirement that $n \geq 5000$ is only necessary in case 2b where we use Theorem \ref{main thm trees}. This means that if Theorem \ref{main thm trees} is proven for all $n \geq k$ (where $k$ is some positive integer), then also Theorem \ref{main forest} is proven for all $n \geq k$.
\end{Remark}

\section{Adversary recognition for the girth}

The next graph property that we will examine is the girth, which is the length of the smallest cycle in a graph. For example, the girth of $C_n$ is $n$ and the girth of $K_n$ is $3$ for $n\geq 3$. If the graph does not contain any cycle, the girth is defined to be  infinity. The three infinite families of pairs from the previous section show that - in the worst case - we may need at least $\lfloor \frac{n}{2}\rfloor+2$ cards to determine the girth. This is the case as the girth for forests is infinite, while it is finite  for unicyclic graphs. Hence, the graphs in these pairs do not have the same girth. In this section, we will show that the girth of a graph $G$ can be determined from  any of its $\frac{2n}{3}+1$ cards. We start with an elementary lemma about graphs where the girth is at least $\frac{2n}{3}+1$. A full proof of this result can be found in the Appendix.

\begin{Lemma}\label{max een cycle}
   Every graph $G$ on $n$ vertices with girth at least $\frac{2n}{3}+1$ has  at most one cycle.
\end{Lemma}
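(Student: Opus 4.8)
The statement is that any graph $G$ on $n$ vertices with girth at least $\frac{2n}{3}+1$ contains at most one cycle. I would argue by contradiction: suppose $G$ has two distinct cycles $C_1$ and $C_2$. The key structural fact is that a graph with at least two independent cycles contains a subgraph that is either a "theta graph" (two vertices joined by three internally disjoint paths), a "figure-eight" / "dumbbell" (two cycles sharing at most one vertex, possibly joined by a path), or more generally has cyclomatic number (first Betti number) at least $2$. In each of these shapes the vertex count is forced to be large relative to the girth, which will clash with the hypothesis $g(G) \ge \frac{2n}{3}+1$, i.e. $n \le \frac{3}{2}(g-1) < \frac{3g}{2}$.

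\textbf{Key steps.} First I would reduce to the subgraph $H$ spanned by the edge sets of $C_1$ and $C_2$ together with (if the cycles are vertex-disjoint) a shortest path connecting them; $H$ has cyclomatic number $\ge 2$ and $n_H \le n$, so it suffices to bound $n_H$ from below. Second, I would split into the two canonical cases. Case (a): $C_1$ and $C_2$ share at least one vertex. Then their union contains a theta subgraph on vertices $x,y$ joined by three internally disjoint paths $P_1,P_2,P_3$ of lengths $a \le b \le c$. Any two of these paths form a cycle, so $a+b \ge g$, $a+c\ge g$, $b+c\ge g$; summing, $2(a+b+c)\ge 3g$, hence the number of edges in the theta graph is $a+b+c \ge \frac{3g}{2}$, and the number of vertices is $a+b+c-1 \ge \frac{3g}{2}-1 \ge \frac{3}{2}\left(\frac{2n}{3}+1\right)-1 > n$, a contradiction. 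Case (b): $C_1$ and $C_2$ are vertex-disjoint (if they meet in exactly one vertex the theta-argument still applies after a small modification, or one treats it as a degenerate version of this case). Let $Q$ be a shortest path between them; then $|V(C_1)| \ge g$, $|V(C_2)| \ge g$, and these vertex sets are disjoint, giving $n \ge 2g > \frac{3g}{2}$, again contradicting $n < \frac{3g}{2}$. In both cases the contradiction follows purely from counting.

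\textbf{Main obstacle.} The only real care needed is the clean case analysis for how two cycles can overlap — in particular making sure the "two cycles sharing $\ge 1$ vertex" situation really does contain a theta subgraph (one has to pick the two cycles, or rather two appropriate cycles in their union, so that they share a nontrivial path rather than overlapping in a complicated way; the standard fact is that the union of two distinct cycles contains a theta subgraph or is a disjoint/one-point union of cycles). A convenient way to package this uniformly is via the cyclomatic number: if $G$ has $\ge 2$ independent cycles then it contains a subgraph $H$ with $\delta(H)\ge 2$ and $|E(H)| - |V(H)| \ge 1$; combined with the girth bound $|E(H)| \ge |V(H)|$ would only give one cycle's worth, so one does need the finer theta/disjoint-cycles dichotomy to get the factor $\frac{3}{2}$ (resp.\ $2$). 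I expect this bookkeeping to be the bulk of the (short) proof, with all numerical estimates being immediate from $n \le \frac{3}{2}(g-1)$.
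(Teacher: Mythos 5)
Your proposal is correct and follows essentially the same route as the paper: the paper also dismisses (near-)disjoint cycles as forcing too many vertices and then bounds the shortest cycle in a theta subgraph by averaging over its three cycles, obtaining $2+\frac{2(n-2)}{3}<\frac{2n}{3}+1$, which is your estimate $a+b+c\ge\frac{3g}{2}$ in slightly different form. Your write-up is in fact more careful than the paper's, which simply asserts that the theta graph is the worst case without the overlap case analysis you spell out.
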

\begin{proof}
As two disjoint cycles will be too small, the worst case is a graph $G$ that consists of $2$ vertices with $3$ paths between them. Then we have three cycles that are obtained from taking two of these three paths. Hence, the length of the smallest cycle is at most $2+\frac{2(n-2)}{3} < \frac{2n}{3}+1$.
\end{proof}

In chapter \ref{chapter: forest}, we saw that for $n \geq 5000$ a forest and a non-forest on $n$ vertices have at most $\lfloor \frac{n}{2}\rfloor+1$ common cards. However one can easily prove, even for small $n$, using (mostly) counting arguments that 
the following result holds:

\begin{restatable}{Lemma}{forestvscycle}\label{forest vs cycle}
 Forests and non-forests have at most $\frac{2n}{3}$ common cards.
\end{restatable}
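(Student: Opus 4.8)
The plan is to argue by contradiction and reuse the structural analysis already developed for the $\lfloor n/2\rfloor+1$ bound, only now pushed up to the threshold $\tfrac{2n}{3}$. Suppose a forest $F$ and a non-forest $G$ on $n$ vertices have at least $\tfrac{2n}{3}+1$ common cards. As in the proof of Lemma \ref{relation K_F}, every common card is a forest, so every common card of $G$ must correspond to a vertex lying on a cycle of $G$; in particular $G$ has a cycle whose length is at least $\tfrac{2n}{3}+1$. By Lemma \ref{max een cycle} (or directly, as in Lemma \ref{max one cycle}, noting $\tfrac{2n}{3}+1 \geq \lfloor n/2\rfloor+2$ for $n$ large and handling small $n$ separately), $G$ has exactly one cycle, of length $L \geq \tfrac{2n}{3}+1$. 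Then at most $n-L \leq \tfrac{n}{3}-1$ vertices on the cycle fail to correspond to a common card, so there are at least $L-(n-L) = 2L-n \geq \tfrac{n}{3}+2$ vertices of degree exactly $2$ on the cycle that do correspond to common cards. Pick one such card $G-w=F-v$ with $\deg_G(w)=2$: deleting a degree-$2$ vertex from the cycle leaves the cyclic component connected, so $\kappa_{G-w}=\kappa_G$, hence $\kappa_F \leq \kappa_{F-v}+1 = \kappa_G+1$, and also $e(G-w)=e(G)-2$ gives $\deg_F(v) = 1$.

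Next I would run the same counting dichotomy on the relation between $e(F)$ and $e(G)$, i.e. on $\kappa_F - \kappa_G \in \{1,0,-1,<-1\}$, exactly as in Cases 1--4 of the proof of Theorem \ref{main forest}, but with the sharper input that there are at least $\tfrac{2n}{3}+1$ common cards instead of $\lfloor n/2\rfloor+2$. In the cases where $e(F) \le e(G)-1$ (corresponding to $\kappa_F \geq \kappa_G$), every common card corresponding to a degree-$2$ vertex of $G$ either forces an extra isolated vertex (or a degree-$1$ vertex outside the main component) in $F$, or, for vertices of degree $\geq 3$ on the cycle, forces an extra vertex hanging off the cycle inside the main component of $G$; in each instance the vertex count of $G$ is forced above $n$ once the number of common cards exceeds roughly $\tfrac{n}{2}$, which is more than covered by the bound $\tfrac{2n}{3}+1$. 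Similarly, in Case 4 ($\kappa_F < \kappa_G-1$, so $e(F) > e(G)$) every card of $G$ corresponds to a vertex of degree $\geq 2$, forcing $F$ to have at least $\tfrac{2n}{3}+1$ vertices of degree $\geq 3$ and hence (since each tree component with $k$ such vertices has $\geq k+2$ leaves) at least $\tfrac{2n}{3}+3$ leaves, again overshooting $n$.

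The one place where the previous argument invoked the deep Theorem \ref{main thm trees} was Case 2b ($\kappa_F=\kappa_G$, $M_F=M_G$), which reduced to a tree and a connected unicyclic graph of size $\le n$ having $\ge \lfloor n/2\rfloor+2$ common cards. Here, because our threshold is $\tfrac{2n}{3}+1$, the reduction instead yields a tree $T$ and a connected unicyclic graph $H$ on $m \le n$ vertices with at least $\tfrac{2n}{3}+1 \ge \tfrac{2m}{3}+1$ common cards, and this I would rule out by an elementary edge-count: every common card corresponds to a degree-$2$ cycle vertex of $H$ except for at most $m-L_H$ of them where $L_H$ is the cycle length of $H$, and the same leaf/degree bookkeeping as above (there are at least $2L_H - m \ge \tfrac{m}{3}+2$ degree-$2$ cycle vertices, forcing $T$ to have many low-degree vertices while $e(T)=e(H)-1$) produces a contradiction well below the $\tfrac{2n}{3}$ threshold. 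Similarly Case 3b, which previously cited Lemma \ref{component H in connected}, can be handled by direct counting at this looser threshold. The main obstacle is bookkeeping discipline: making sure that in every subcase the elementary inequality $2e(G)=2n+2-2\kappa_G$ (or the leaf bound in a forest) genuinely fails once the number of common cards is at least $\tfrac{2n}{3}+1$, and in particular checking the small-$n$ range where $\tfrac{2n}{3}+1$ is not comfortably above $\lfloor n/2\rfloor+2$ so that Lemma \ref{max one cycle}-type arguments still apply; but no single step is deep, which is exactly why the statement holds "even for small $n$" in contrast to Theorem \ref{main forest}.
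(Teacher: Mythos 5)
Your opening matches the paper's: the cycle has length $L\geq \frac{2n}{3}+1$, Lemma \ref{max een cycle} gives unicyclicity, and at least $2L-n\geq\frac{n}{3}+2$ common cards come from degree-$2$ cycle vertices. After that, however, you diverge into the four-case analysis of Theorem \ref{main forest}, and the critical step --- your replacement for Case 2b --- has a genuine gap. You claim that a tree $T$ and a connected unicyclic graph $H$ on $m\leq n$ vertices with at least $\frac{2m}{3}+1$ common cards can be ruled out by leaf/degree bookkeeping ``well below the $\frac{2n}{3}$ threshold.'' That is false: running the count gives $T$ at least $\frac{m}{3}+2$ leaves, hence $\Lf(H)\geq \frac{m}{3}-1$, which together with the $L\geq\frac{2m}{3}+1$ non-leaf cycle vertices yields only $m\geq m$ --- exact tightness, not a contradiction. (Also, your assertion in the first paragraph that $e(G-w)=e(G)-2$ ``gives $\deg_F(v)=1$'' is unjustified at that point; $\deg_F(v)=e(F)-e(G)+2$, which is $0$, $1$, or $2$ depending on which of your later cases holds.)

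The paper avoids the case split entirely and, in particular, never reduces to the tree-versus-unicyclic subproblem. It first rules out $\kappa_F=\kappa_G+1$ outright (that case would force $F$ to have so many isolated vertices that $\kappa_G$, plus the one component containing the $\geq\frac{2n}{3}+1$-vertex cycle, exceeds $n$ vertices), concluding $\kappa_F\leq\kappa_G$ and hence $e(G)>e(F)$ uniformly. Then a single global degree count --- degree-$2$ cycle cards force $\deg_F(v)\leq 1$, so $F$ has $\geq\frac{n}{3}+2$ vertices of degree $\leq 1$, transferring to $\geq\frac{n}{3}-1$ such vertices in $G$ --- again comes out exactly tight, and the proof proceeds by exploiting that tightness: $L=\frac{2n}{3}+1$, every cycle vertex corresponds to a common card and has degree $2$, which squeezes $n$ down to $3$, and $n=3$ is checked by hand ($C_3$ has three connected cards, while the forest would be disconnected). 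If you want to salvage your route, you must carry out this same tightness analysis inside your Case 2b (and verify the analogous tightness does not also survive in Cases 2a and 3a at the $\frac{2n}{3}$ threshold); as written, the claim that ``no single step is deep'' papers over exactly the step where the inequality refuses to be strict.
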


We will now combine the previous two lemmas with a version of the previous coloring  techniques to prove the following theorem.

\thmgirth*
\begin{proof}
First, from Lemma \ref{forest vs cycle} we can determine whether a graph has a cycle based on any $\frac{2n}{3}+1$ of its cards. So if $G$ does not have any cycle, we know that its girth is $\infty$.

Let $G$ be an arbitrary graph with a cycle and let $L$ be the girth of $G$. Any cycle of length at most $\frac{2n}{3}$ appears on at least one of the $\frac{2n}{3}+1$ cards. This implies that if the girth $L$ of $G$ is $\leq  \frac{2n}{3}$, we see a cycle of length $L$ and since we do not see any smaller cycles, we know that $L$ is the girth of $G$. Hence, we will only consider graphs $G$ with at least one cycle and a girth of $L \geq \frac{2n}{3}+1$. 

By Lemma \ref{max een cycle}, these graphs $G$ have exactly one cycle. If this unique cycle is visible on at least one card, we can use this card to determine the girth $L$. Hence, we assume that this cycle is not visible so that every card in the subdeck corresponds to a vertex in the cycle of $G$. These cards $G-v$ all contain a path of length $L-1$ because $G$ contains a cycle of length $L$.

To determine the length of the unique cycle, we will use a coloring somewhat similar to the one in Figure \ref{Kleuring 1} to distinguish good and bad vertices on the cycle. If there are $k$ vertices in a branch connected to a vertex $v$ on the cycle, then we color all vertices on the cycle with a distance between 1 and $k$ of $v$ red. In particular, we do not color the vertex $v$ itself red. So every vertex outside the cycle corresponds to at most two  red vertices. Thus, there are at least
\[L-2(N-L) = 3L-2N= 3\left(L-\frac{2n}{3}-1\right)+3\]
white vertices on the cycle. Since at most $L-\frac{2n}{3}-1$ vertices on the cycle do not correspond to a card in the subdeck, there is at least one card in the subdeck that corresponds to a white vertex $w$.

Every branch with a path of length $k$ from the cycle contains at least $k$ vertices. Therefore, such branches are either incident to $w$ or have distance at least $k+1$ from $w$. So on the card $G-w$, these branches lie in such places that by taking them instead of the cycle does not result in a longer path. This means that the longest path on $G-w$ has a length of exactly $L-1$.

In conclusion, we can reconstruct $L-1$ because it is the smallest among the longest paths over all cards. So we can determine $L-1$ and thus also the girth $L$ self. 
\end{proof}

\section{Adversary recognition for bipartiteness}
The last property that we will examine is bipartiteness. A graph is bipartite if it does not contain any cycle of odd length.  The main goal of this chapter is to show that we can determine whether a graph $G$ has such a cycle even if there is a linear amount of cards missing from its deck.

\thmbipartite*

We will use a similar approach as in the proof of Theorem \ref{main thm trees}. In this case, we consider a bipartite graph $G$ and a non-bipartite graph $H$ that have at least  $\frac{5n}{6}+2$ common cards. We fix $D$ to be a subdeck of $\frac{5n}{6}+2$ of these common cards. All observations and results in this section refer to this $G,$ $H$ and $D$ under these conditions.

We will call cycles of length at most  $\frac{5n}{6}+1$ \textit{small cycles} 
 because these cycles are all visible on at least one card of $D$. All cycles with at least $\frac{5n}{6}+2$ are called \textit{large cycles}. This gives us the following observations.

\begin{Obs}\label{small cycles}
    Let $k \leq \frac{5n}{6}+1$. Then $G$ contains a cycle of length $k$ if and only if $H$ does.
\end{Obs}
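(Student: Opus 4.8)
The plan is to prove this by a short counting argument: a cycle of length $k \le \frac{5n}{6}+1$ occupies at most $\frac{5n}{6}+1$ vertices, whereas the subdeck $D$ exposes $\frac{5n}{6}+2$ distinct vertices of each of $G$ and $H$, so in either graph some exposed vertex must avoid a given short cycle, making that cycle visible on a common card and hence present in the other graph too.

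First I would set up the correspondence between $D$ and vertex sets. Since the deck of an $n$-vertex graph is a multiset of exactly $n$ cards, one for each vertex, and $D$ is a sub-multiset of the deck of $G$ (and of the deck of $H$) with $|D| = \frac{5n}{6}+2 \le n$, the cards of $D$ correspond to a set $S_G \subseteq V(G)$ of $\frac{5n}{6}+2$ distinct vertices, and likewise to a set $S_H \subseteq V(H)$ of $\frac{5n}{6}+2$ distinct vertices, with a bijection $S_G \to S_H$, $v \mapsto w$, such that $G - v \cong H - w$ for each corresponding pair. The key quantitative point I would record is that $|S_G| = |S_H| = \frac{5n}{6}+2$.

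Next, suppose $G$ contains a cycle $C$ of length $k \le \frac{5n}{6}+1$. Then $|V(C)| = k$ is an integer with $k \le \frac{5n}{6}+1 < \frac{5n}{6}+2 = |S_G|$, so $S_G \setminus V(C) \neq \emptyset$; pick $v \in S_G$ with $v \notin V(C)$. Deleting $v$ does not touch $C$, so $G - v$ contains a cycle of length $k$, and since $G - v \cong H - w$ for the corresponding $w \in S_H$, the graph $H - w$, and therefore $H$, also contains a cycle of length $k$. The reverse implication is the identical argument with the roles of $G$ and $H$ swapped: the hypotheses on $D$ are symmetric in $G$ and $H$, and $|S_H| = \frac{5n}{6}+2 > k$ as well, so a short cycle in $H$ survives on some card of $D$ and thus appears in $G$. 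This gives the stated equivalence.

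I do not expect any genuine obstacle; the only step warranting care is the bookkeeping that a subdeck of $\frac{5n}{6}+2$ cards really accounts for $\frac{5n}{6}+2$ distinct vertices of each graph (so that the strict inequality $k < |S_G|$ is available), which follows immediately from the fact that each deck has exactly $n = |V(\cdot)|$ cards with multiplicities recorded vertex by vertex. Everything else is a one-line vertex-deletion observation.
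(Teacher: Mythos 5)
Your proof is correct and matches the paper's reasoning: the paper treats this as an immediate observation, noting that any cycle of length at most $\frac{5n}{6}+1$ must miss one of the $\frac{5n}{6}+2$ distinct vertices whose cards appear in $D$, hence is visible on a common card and therefore present in both graphs. Your write-up simply makes the bookkeeping (distinctness of the vertices behind the subdeck, and the symmetry of the argument) explicit.
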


As $G$ does not contain an odd cycle, neither do cards in $D$. Thus all small cycles of $G$ and $H$ have even length.

\begin{Obs}
All odd cycles in $H$ are large cycles, so all small cycles of $H$ are even.
\end{Obs}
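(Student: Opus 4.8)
The statement is essentially a renaming of the remark immediately preceding it, so the plan is short. The one thing that needs to be said is \emph{why short cycles of $H$ show up in the subdeck}: a cycle $C\subseteq H$ of length $k$ survives in the card $H-w$ precisely when $w\notin V(C)$, so at most $k$ of the $n$ cards in the full deck of $H$ fail to display $C$. Since $D$ consists of $\frac{5n}{6}+2$ common cards, whenever $k\le \frac{5n}{6}+1$ we have $|D|>k$, hence at least one card of $D$ contains an isomorphic copy of $C$ as a subgraph. This is exactly the visibility argument already invoked for Observation \ref{small cycles}.

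Granting this, I would argue by contradiction. Every card in $D$ is a common card, hence isomorphic to some $G-v$, and as a subgraph of the bipartite graph $G$ it contains no cycle of odd length. If $H$ had an odd cycle $C$ of length $k\le \frac{5n}{6}+1$, then the card of $D$ displaying $C$ would contain an odd cycle while also being a subgraph of $G$ --- a contradiction. Therefore every odd cycle of $H$ has length at least $\frac{5n}{6}+2$, i.e.\ it is a large cycle; equivalently, every small cycle of $H$ is even.

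There is no real obstacle here: the only point to be careful about is the strict inequality $|D|=\frac{5n}{6}+2>\frac{5n}{6}+1\ge k$, which is what forces a card of $D$ to correspond to a vertex outside $V(C)$. Alternatively, one can simply cite Observation \ref{small cycles}: since $G$ has no odd cycle and $G,H$ share all cycles of length $\le \frac{5n}{6}+1$, the graph $H$ has no odd cycle of such length either, which is precisely the assertion.
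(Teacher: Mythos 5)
Your proposal is correct and follows essentially the same route as the paper: small cycles ($\le \frac{5n}{6}+1$ vertices) must survive on some card of $D$ since $|D|$ exceeds the number of cards that could destroy them, and every card in $D$ is also a card of the bipartite graph $G$, hence odd-cycle-free, so any odd cycle of $H$ must be large. The counting detail you spell out (at most $k$ cards of $H$ miss a $k$-cycle, versus $|D|=\frac{5n}{6}+2$) is exactly the visibility argument the paper relies on implicitly via Observation \ref{small cycles}.
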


For the rest of this section, we fix a smallest odd cycle $L$ of $H$. The previous observation gives $L \geq \frac{5n}{6}+2$, where we abuse notation by using $L$ for the number of vertices in $L$. We will continue this notation convention in the rest of the proof. As the cycle $L$ can not be visible on any common card, all vertices corresponding to a common card $G-v=H-w$ lie in $L$. We will use this fact to  prove that many common cards in $D$ correspond to vertices of degree 2. With this information, we will derive an upper bound for the largest even cycle of $H$.

\begin{Lemma}\label{lotsdeg2} At least $\frac{2n}{3}+4$ cards of $D$ correspond to a vertex of degree 2 in $H$.    
\end{Lemma}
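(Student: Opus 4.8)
The plan is to argue, as in the proofs of Lemma~\ref{card from leaf} and Lemma~\ref{min leaves}, that vertices of $L$ of degree at least $3$ in $H$ are scarce. Since every one of the $\frac{5n}{6}+2$ cards of $D$ corresponds to a vertex of $L$, and there are at most $n-L\le\frac n6-2$ vertices of $H$ outside $V(L)$, it suffices to show that at most $\frac n6-2$ cards of $D$ come from vertices of degree $\ge 3$ in $H$; then the remaining $(\frac{5n}{6}+2)-(\frac n6-2)=\frac{2n}{3}+4$ cards come from degree-$2$ vertices.

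First I would record that $L$ is chordless: a chord splits $L$ into an even cycle and an odd cycle whose lengths sum to $L+2$, and since both are at least $3$ the odd one has length at most $L-1$, contradicting minimality of $L$. Hence every vertex $w\in V(L)$ with $\deg_H(w)\ge 3$ has at least one neighbour in the small set $V(H)\setminus V(L)$.

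Next I would control how the vertices of $V(H)\setminus V(L)$ attach to $L$, using that every card of $D$ is a card of the bipartite graph $G$ and is therefore bipartite. If $x\notin V(L)$ has two neighbours $w_1,w_2$ on $L$ at distance $d$ along $L$, then the two arcs of $L$ together with the path $w_1 x w_2$ give cycles of lengths $d+2$ and $(L-d)+2$; since $L$ is odd and shortest odd, one checks this forces $d=2$. In particular $x$ has at most two neighbours on $L$, and when it has exactly two, with middle vertex $u$, the card $H-u$ contains the odd cycle formed by $w_1,x,w_2$ and the long arc of $L$, so $H-u$ is not bipartite and $u$ does \emph{not} correspond to a card of $D$. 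I would then assign each degree-$\ge 3$ vertex of $L$ that corresponds to a card of $D$ to a neighbour outside $V(L)$, choosing the assignment injective except possibly at a pair $\{w_1,w_2\}$ at distance $2$ whose unique common outside neighbour is the sole outside neighbour of each of $w_1,w_2$ (a ``bad'' pair). Each bad pair contributes a distinct killed middle vertex $u\notin D$, so if there are $\beta$ bad pairs then $L\ge\frac{5n}{6}+2+\beta$, whence $|V(H)\setminus V(L)|\le\frac n6-2-\beta$; on the other hand the number of degree-$\ge 3$ vertices of $L$ corresponding to cards of $D$ is at most $|V(H)\setminus V(L)|+\beta\le\frac n6-2$, which is what we need.

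The main obstacle is exactly this final bookkeeping. The naive count only says each outside vertex has at most two neighbours on $L$, giving at most $2(\frac n6-2)=\frac n3-4$ degree-$\ge 3$ vertices on $L$, which is a factor of two too weak. Closing the gap forces one to invoke bipartiteness of the cards twice — once to pin the attachment configuration to distance $2$, and once to kill the corresponding middle vertex — and to track that each killed middle vertex lengthens $L$, and hence shrinks $V(H)\setminus V(L)$, by exactly enough to absorb the double counting; making the ``except at bad pairs'' clause of the assignment precise (handling vertices with several outside neighbours, and pairs with several common outside neighbours) is the part requiring care.
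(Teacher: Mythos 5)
Your proposal is correct and takes essentially the same route as the paper's proof: each vertex outside $L$ has at most two neighbours on $L$, necessarily at distance exactly $2$, and the resulting middle vertex cannot yield a common card, so the double-counting is absorbed by the corresponding lengthening of $L$, leaving at most $n-L\le \frac{n}{6}-2$ cards from degree-$\ge 3$ vertices. You are in fact somewhat more explicit than the paper about the chordlessness of $L$ and about the final amortized bookkeeping, which the paper compresses into an ``on average'' remark.
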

\begin{proof}
    First, note that there are exactly $n-L$ vertices outside the cycle $L$ of $H$. Now we will prove that every vertex outside the cycle $L$ can be connected to at most $2$ vertices on the cycle $L$. First, notice that if $v$ is connected to $w_1$ and $w_2$, the distance between $w_1$ and $w_2$ should be even because otherwise we get an odd cycle of length at most $L/2+1 \leq \frac{n}{2}+1$. Hence, there is at least one vertex on the cycle of $L$ between $w_1$ and $w_2$. Moreover, we can replace the (short even) arc $w_1w_2$ of $L$ with the path $w_1vw_2$ to get a new odd cycle. Since this cycle is at least as big as $L$,  there is at most one vertex between $w_1$ and $w_2$. Thus, there is always exactly one vertex  between $w_1$ and $w_2$ in $L$, which concludes the proof of the claim.
      
    Next, we see that if a vertex $v$ outside the cycle $L$ is connected to two vertices $w_1$ and $w_2$ on the cycle, that then the vertex on the cycle between $w_1$ and $w_2$, called $w$, can not correspond to a common card, as the card $H-w$ contains a cycle of length $L$ consisting of the long arc $w_2w_1$ and the `branch' $w_1 v w_2$. This means that every vertex on the cycle that is connected to two vertices on $L$ corresponds to a unique vertex on the cycle that can not correspond to a common card. Since this vertex lies directly between the two vertices whose degree increased, we see that every vertex $v$ which increases the degree of two vertices on the cycle of $G$ from  2 to 3, adds a new vertex to the cycle that can not correspond to a common card.

    Hence, we see that on average every vertex that does not correspond to a card increases the degree of at most one vertex corresponding to a card from $2$ to something higher. Thus, there are at least $\frac{5n}{6}+2-(n-\frac{5n}{6}-2) \geq \frac{2n}{3}+4$ cards in $D$ corresponding to a vertex of degree 2.
\end{proof}

The following shows that the number of edges of $G$ and $H$ are equal.

\begin{Lemma} $e(G)=e(H)$. \label{same number of edges} \end{Lemma}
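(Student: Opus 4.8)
The plan is to mimic the edge-counting arguments already used in Lemma~\ref{card from leaf} and Lemma~\ref{relation K_F}, using Lemma~\ref{lotsdeg2} as the key input. First I would pick one of the many common cards guaranteed by Lemma~\ref{lotsdeg2}: a card $C = G-v = H-w$ where $w$ has degree $2$ in $H$. Since $w$ lies on the odd cycle $L$ and has degree $2$, deleting it removes exactly $2$ edges, so $e(C) = e(H) - 2$. On the other side, $e(C) = e(G) - \deg_G(v)$. The goal is to show $\deg_G(v) = 2$, which immediately gives $e(G) = e(H)$.

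To pin down $\deg_G(v)$, I would run a parity/counting argument. Suppose for contradiction that $e(G) \neq e(H)$. Since $G$ is bipartite it has no odd cycle, so $e(G) \le \binom{n}{2}$ is not directly useful; instead I would compare edges via the common cards. Each common card $G-v = H-w$ satisfies $\deg_G(v) = e(G) - e(C)$ and $\deg_H(w) = e(H) - e(C)$, hence $\deg_H(w) - \deg_G(v) = e(H) - e(G)$ is a \emph{constant} $\delta$ over all common cards. If $\delta \ge 1$, then for the $\ge \tfrac{2n}{3}+4$ cards of $D$ corresponding to degree-$2$ vertices of $H$ (Lemma~\ref{lotsdeg2}), the corresponding vertex $v$ of $G$ has degree $2 - \delta \le 1$; since degrees are nonnegative, $\delta = 1$ and those $v$ are isolated vertices (leaves give $\delta=1$ too, so I must handle $\deg_G(v)\in\{0,1\}$—wait, $2-\delta=1$ forces leaves). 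So with $\delta = 1$, at least $\tfrac{2n}{3}+4$ vertices of $G$ are leaves, and $G$ is bipartite. Meanwhile for the remaining common cards (at most $n - \tfrac{2n}{3} - 4 = \tfrac{n}{3} - 4$ of them, but really up to $\tfrac{5n}{6}+2 - (\tfrac{2n}{3}+4) = \tfrac{n}{6}-2$), the $H$-vertex has degree $\ge 3$. Then $2e(H) = \sum_{w}\deg_H(w) \ge 3(\tfrac{2n}{3}+4) + \cdots$ — hmm, I need to be careful: the degree-$2$ \emph{cards} of $D$ correspond to degree-$2$ vertices, but those are $\ge \tfrac{2n}{3}+4$ of them, contributing $2$ each, while $G$ has $\ge \tfrac{2n}{3}+4$ leaves contributing $1$ each to $2e(G)$; since $e(H)=e(G)+1$, I get $2e(G)+2 = 2e(H) \ge 2(\tfrac{2n}{3}+4) + 3\cdot(\text{rest})$ while $2e(G) \le 1\cdot(\tfrac{2n}{3}+4) + (\text{something})\cdot(\text{rest})$, and subtracting yields a contradiction for $n$ large. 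Symmetrically, if $\delta \le -1$, then $\deg_H(w) = 2$ forces $\deg_G(v) = 2-\delta \ge 3$, so $G$ has $\ge \tfrac{2n}{3}+4$ vertices of degree $\ge 3$, forcing $2e(G) \ge 3(\tfrac{2n}{3}+4) + (n - \tfrac{2n}{3}-4) = 2n + 8$, impossible since $G$ is simple on $n$ vertices. Hence $\delta = 0$, i.e. $e(G) = e(H)$.

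I expect the main obstacle to be bookkeeping the inequality in the $\delta = 1$ case cleanly: unlike the $\delta \le -1$ case (a one-line handshake contradiction), here I must combine the lower bound on $e(H)$ coming from the degree-$\ge 3$ vertices of $H$ with the upper bound on $e(G)$ coming from $G$ having many leaves, and reconcile $e(H) = e(G)+1$. Concretely, the at-least-$\tfrac{2n}{3}+4$ leaves of $G$ and the at-least-$\tfrac{2n}{3}+4$ degree-$2$ vertices of $H$ overlap under the card correspondence, so I should phrase it as: for each of these $\ge \tfrac{2n}{3}+4$ cards, $\deg_H(w) - \deg_G(v) = 1$, and for \emph{every} common card this difference is $1$; summing over the full vertex sets (noticing non-card vertices still contribute to degree sums) gives $2e(H) - 2e(G) = 2$, consistent — so the contradiction must instead come from the non-card vertices of $H$ all having degree $\ge 3$ (they're on the cycle but their neighbors pushed degree up, per the proof of Lemma~\ref{lotsdeg2}) combined with the $G$-side having those as... this is exactly where care is needed. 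The safe route is: $\delta \ge 1$ means $e(H) \ge e(G)+1$; but also every vertex of $G$ not corresponding to a common card — there are $\le n - (\tfrac{5n}{6}+2) = \tfrac{n}{6}-2$ of them — and every vertex of $H$ not corresponding to a common card likewise; then a direct handshake count on $H$ using Lemma~\ref{lotsdeg2}'s degree-$2$ count versus a handshake count on $G$ using the forced leaves, together with $\tfrac{n}{6}-2 \ll \tfrac{2n}{3}$, yields $2e(G) \le (\tfrac{2n}{3}+4) + 2(\tfrac{n}{6}-2) + \cdots < 2e(H) - 2$, a contradiction. I would write this out carefully with explicit constants, as in the model of Lemma~\ref{min leaves}, and conclude $e(G) = e(H)$.
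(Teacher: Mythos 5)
Your opening move is exactly the paper's: for every common card $G-v=H-w$ the quantity $\deg_H(w)-\deg_G(v)=e(H)-e(G)$ is a constant $\delta$, so by Lemma~\ref{lotsdeg2} the graph $G$ has at least $\tfrac{2n}{3}+4$ vertices of the single degree $d=2-\delta$, and the goal is to rule out $d\neq 2$. The gap is in how you rule it out. For $\delta\le -1$ (i.e.\ $d\ge 3$) you conclude from $2e(G)\ge 2n+8$ that this is ``impossible since $G$ is simple on $n$ vertices'' --- but a simple graph on $n$ vertices can have up to $\binom{n}{2}$ edges, and at this point in the argument nothing bounds $e(G)$ by $n$ ($G$ is only known to be bipartite; the facts that it is essentially a long cycle with small decorations are established \emph{after} this lemma, partly using it). So the handshake lower bound contradicts nothing, and in particular the case $d=3$ survives your argument. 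For $\delta\ge 1$ you candidly observe that the global degree-sum comparison is \emph{consistent} (it just returns $2e(H)-2e(G)=2\delta$), and the replacement you sketch ends in an unverified ``$\cdots<2e(H)-2$''; as written this case is not closed either.

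The missing ingredient, which is the second half of the paper's proof, is to extract a \emph{second} large set of low-degree vertices of $G$ from the card itself and count vertices rather than edges. Deleting a degree-$2$ vertex $w$ from $H$ leaves at least $\tfrac{2n}{3}+4-1-2=\tfrac{2n}{3}+1$ vertices of degree $2$ on the card $H-w=G-v$; adding $v$ back raises the degree of only $d$ of them, so $G$ has at least $\tfrac{2n}{3}+1-d$ vertices of degree exactly $2$. For any $d\neq 2$ this set is disjoint from the $\tfrac{2n}{3}+4$ vertices of degree $d$, forcing $G$ to have more than $n$ vertices --- a genuine contradiction that works uniformly for $d=0,1,3$ and $d\ge 4$. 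I would rebuild both of your cases around this vertex count; without it, neither direction of $\delta\neq 0$ is actually excluded.
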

\begin{proof}
Suppose $e(G) \neq e(H)$. The at least $\frac{2n}{3}+4$ cards that correspond to a vertex of degree 2 in $H$, correspond to a vertex of degree $d$ in $G$ implying that $G$ has at least $\frac{2n}{3}+4$ vertices of degree $d$. Moreover, by deleting a vertex of degree $2$ from $H$ we get a common card with exactly $\frac{2n}{3}+1$ vertices of degree 2. 

So if $d>3$, then we can consider this common card to show that $G$ has at least $\frac{2n}{3}+1$ vertices of degree at most 3. But then $G$ would have at least $\frac{4n}{3}+4$ vertices, a contradiction. If $d=1$, then $G$ this common card gives that $G$ has at least $\frac{2n}{3}$ vertices of degree $2$, implying that $G$ has again too many vertices. And similarly, if $d=3$ $G$ has at least $\frac{2n}{3}-2=\frac{2n}{3}$ vertices of degree at most 2, which also leads to a contradiction. Hence, $d=2$ implying $e(G)=e(H)$. \end{proof}

We call a vertex $w$ in $L$ \textit{simple} if $\deg(w)=2$ and if $w$ does not lie in any even cycle $C$.

\begin{Lemma}\label{simplevertices}
    Every vertex of degree 2 of $H$ corresponding to a common card is simple.
\end{Lemma}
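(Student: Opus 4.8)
The plan is to argue by contradiction, exploiting the fact that deleting a degree-$2$ vertex of the shortest odd cycle $L$ opens $L$ into an odd path, while deleting the same vertex from a purported even cycle opens it into an \emph{even} path with the same endpoints. So suppose $w$ has degree $2$ in $H$ and corresponds to a common card $G-v=H-w$; as observed above, $w$ then lies on $L$. Suppose for contradiction that $w$ also lies on some even cycle $C$ of $H$. Let $a$ and $b$ be the two neighbours of $w$ along $L$; since $\deg_H(w)=2$ these are its only neighbours, so every cycle through $w$ uses both edges $wa$ and $wb$. In particular $C$ is the union of the length-$2$ path $a\,w\,b$ with an $a$–$b$ path $P$ avoiding $w$, and since $C$ is even, $P$ has even length. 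On the other hand $L-w$ is an $a$–$b$ path $Q$ of length $L-2$, which is odd because $L$ is odd.

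The next step is to observe that $P$ and $Q$ are both paths between $a$ and $b$ inside the card $H-w$, and they have lengths of different parity, so $H-w$ must contain an odd cycle. Indeed, in the multigraph $P\cup Q$ every vertex has even degree, so its edge set decomposes into edge-disjoint cycles whose lengths sum to $|P|+|Q|$, an odd number; hence at least one of these cycles has odd length, and since a length-$2$ ``cycle'' arising from an edge shared by $P$ and $Q$ is even, this is a genuine odd cycle of the simple graph $H-w$. (Equivalently, one may invoke the standard characterisation that a graph is bipartite iff, within each component, all paths between a fixed pair of vertices have the same parity — and $a,b$ lie in one component of $H-w$, witnessed by $Q$.)

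Finally, $H-w$ is isomorphic to the card $G-v$, which is a subgraph of the bipartite graph $G$ and therefore bipartite; thus $H-w$ contains no odd cycle, contradicting the previous paragraph. Hence $w$ lies on no even cycle, i.e.\ $w$ is simple. I do not expect a genuine obstacle here: the degree-$2$ hypothesis is exactly what forces every cycle through $w$ to pass through its two $L$-neighbours $a,b$, which is what makes the parity bookkeeping work, and the only point needing a line of care is verifying that the odd cycle extracted from $P$ and $Q$ is an honest cycle of length at least $3$ rather than a degenerate $2$-cycle coming from a shared edge — which is settled by the parity count above.
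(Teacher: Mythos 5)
Your proof is correct and follows essentially the same route as the paper: both use the degree-$2$ hypothesis to see that every cycle through $w$ uses its two $L$-neighbours, splice the arc of $C$ avoiding $w$ onto $L-w$ to obtain an odd closed walk in $H-w$, and derive a contradiction with $H-w=G-v$ being a subgraph of the bipartite graph $G$. The only difference is that you spell out the standard extraction of an odd cycle from the odd closed walk, which the paper leaves implicit.
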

\begin{proof}
    Suppose $w$ is a vertex of degree 2 in $H$ corresponding to a common card, but $w$ is not simple. Then $w$ lies in an even cycle $C$ and in $L$.  Let $w_1$ and $w_2$ be the neighbors of $w$.  Since $w$ has degree 2, the vertices $w_1$ and $w_2$ also lie in $C \cap L$. Replacing the path $w_1ww_2$ in $L$ with the other arc of $C$ gives us an odd closed walk. This odd closed walk lies also in $H-w$ implying that $H-w$ also contains an odd cycle, a contradiction.
\end{proof}

As $H$ has at least $\frac{2n}{3}+4$ vertices of degree $2$ that correspond to a common card, we can conclude that:

\begin{Cor}\label{max length even cycle}
    The largest even cycle of $H$ has length at most $\frac{n}{3}-4$.
\end{Cor}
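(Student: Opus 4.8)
The plan is to leverage the structural information already gathered about $H$: by Lemma~\ref{simplevertices}, the $\frac{2n}{3}+4$ (or more) vertices of degree $2$ in $H$ that correspond to common cards are all \emph{simple}, meaning in particular that none of them lies on any even cycle of $H$. So I would first observe that every even cycle of $H$ must avoid all of these simple vertices; hence the vertex set of any even cycle is contained in the set of at most $n - (\frac{2n}{3}+4) = \frac{n}{3} - 4$ vertices that are \emph{not} simple-and-carded. This immediately caps the length of every even cycle of $H$ by $\frac{n}{3}-4$, since a cycle's length equals its number of vertices.

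More carefully, I would phrase it as: let $C$ be any even cycle of $H$. Every vertex of $C$ that lies on $L$ and has degree $2$ and lies on no even cycle is by definition simple — but a vertex of $C$ trivially lies on an even cycle, namely $C$ itself, so no simple vertex can belong to $C$. By Lemma~\ref{lotsdeg2} there are at least $\frac{2n}{3}+4$ cards of $D$ corresponding to degree-$2$ vertices of $H$, and by Lemma~\ref{simplevertices} each such vertex is simple; these are distinct vertices of $H$, so $H$ has at least $\frac{2n}{3}+4$ simple vertices. None of them can lie on $C$, so $C$ uses only vertices from the remaining set of size at most $n - \left(\frac{2n}{3}+4\right) = \frac{n}{3}-4$. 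Therefore $|C| \le \frac{n}{3}-4$, which is exactly the claimed bound.

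This is essentially a one-line corollary, so there is no serious obstacle; the only thing to be careful about is the bookkeeping that the degree-$2$ carded vertices are genuinely distinct and genuinely simple (both already guaranteed by the cited lemmas) and that ``largest even cycle'' is measured by number of vertices, matching the notation convention introduced earlier in the section for $L$. If one wanted to be fully rigorous about the edge case where $H$ has no even cycle at all, the statement is vacuously true (or one interprets the bound as there being no even cycle to speak of), so no extra argument is needed there.
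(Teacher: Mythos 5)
Your proof is correct and is essentially identical to the paper's: both combine Lemma~\ref{lotsdeg2} and Lemma~\ref{simplevertices} to find at least $\frac{2n}{3}+4$ vertices of $H$ lying on no even cycle, forcing every even cycle into the remaining $\frac{n}{3}-4$ vertices. Your write-up is in fact slightly cleaner, since the paper's displayed arithmetic contains a sign typo ($n-\frac{2n}{3}+4$ where $n-\frac{2n}{3}-4$ is meant).
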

\begin{proof}
    Combining Lemmas \ref{lotsdeg2} and \ref{simplevertices} results that $H$ has at least $\frac{2n}{3}+4$ vertices that do not lie in any even cycle. Hence, every even cycle has size at most $n-\frac{2n}{3}+4= \frac{n}{3}-4$.
\end{proof}

If we combine this lemma with Observation \ref{small cycles}, we get the following corollary:

\begin{Cor}
The largest small cycle in $G$ has size at most $\frac{n}{3}-4$.
\end{Cor}

With this information, we can show that common cards contain many cut vertices.

\begin{Cor}\label{cutvertices}
    Every common card $G-v= H-w$ contains $\frac{2n}{3}+1$ cut vertices of degree 2.
\end{Cor}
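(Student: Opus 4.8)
The plan is to leverage the structural facts we have already established about $H$: by Lemma~\ref{lotsdeg2} at least $\frac{2n}{3}+4$ cards of $D$ correspond to a vertex of degree $2$ in $H$, and by Lemma~\ref{simplevertices} each such vertex is \emph{simple}, i.e.\ has degree $2$ and lies in no even cycle. I want to show that these simple vertices become cut vertices of degree $2$ on the corresponding common card. First I would argue that a simple vertex $w$ is a cut vertex of $H$ itself: since $w$ has degree $2$ and lies in $L$ (the fixed smallest odd cycle), its two neighbours $w_1, w_2$ also lie on $L$; if $w$ were not a cut vertex, there would be a $w_1$--$w_2$ path $P$ avoiding $w$, and combining $P$ with the two edges $w_1 w$ and $w w_2$ gives a second cycle through $w_1 w_2$, whose parity can be arranged to be even (choosing the arc of $L$ from $w_1$ to $w_2$ not through $w$, which has even length since $L$ is odd and $w_1ww_2$ has length $2$) --- contradicting simplicity, once one checks that at least one of the two cycles $P + w_1ww_2$ and $P + (\text{long arc of }L)$ is even. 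Actually the cleanest route: the long arc of $L$ together with $w_1ww_2$ is $L$ itself (odd); the long arc of $L$ together with $P$ has the opposite parity to $P + w_1ww_2$; so one of these two cycles is even and passes through $w$ unless $P$ itself together with $w_1 w w_2$ is even, in which case we are already done. Either way $w$ lies in an even cycle, contradiction, so no such $P$ exists and $w$ is a cut vertex.

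Next I would transfer this to the card. Since $w$ is a degree-$2$ cut vertex of $H$, and the vertex we delete to form the card is some \emph{other} vertex $v$ (a common card corresponds to deleting vertices that lie on $L$; the point is that among the $\frac{2n}{3}+4$ simple vertices of $H$ corresponding to common cards, all but at most one survive the deletion of any single $v$, since deleting one vertex destroys at most one of them and may at worst reduce the count by... ). Here I would be a little careful: fix a common card $C = G - v = H - w_0$. I want $C$ to contain $\frac{2n}{3}+1$ cut vertices of degree $2$. The simple vertices of $H$ are $\frac{2n}{3}+4$ in number; deleting $w_0$ removes one of them from the vertex set and can destroy the "cut/degree-$2$" property of at most its two neighbours on $L$ (which are the only vertices whose degree drops), so at least $\frac{2n}{3}+4-1-2 = \frac{2n}{3}+1$ simple vertices remain in $C$ still of degree $2$, and still cut vertices of $C$: a degree-$2$ cut vertex $w$ of $H$ whose two incident edges are untouched by the deletion of $w_0$ (which holds as long as $w_0 \notin \{w, w_1, w_2\}$) remains a cut vertex of $H - w_0$, because any $w_1$--$w_2$ path in $H - w_0 - w$ would also be one in $H - w$.

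The main obstacle I anticipate is the bookkeeping of exactly which vertices can lose the property upon deleting $w_0$, to land on the clean bound $\frac{2n}{3}+1$ rather than something a bit smaller; in particular one must make sure the neighbours of $w_0$ that are themselves simple are the only ones that can be compromised, and that the $\frac{2n}{3}+4$ count from Lemma~\ref{lotsdeg2} refers to cards (hence to vertices of $H$, all of which are simple by Lemma~\ref{simplevertices}) and not to something weaker. A secondary subtlety is that a "cut vertex of degree $2$ of the card $C$" should perhaps be phrased relative to $C$ as an abstract graph, so I should note that cut-vertexhood is an isomorphism invariant and that the same count holds whether we view $C$ as $H - w_0$ or as $G - v$. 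Once these points are settled the corollary follows by combining Lemmas~\ref{lotsdeg2} and~\ref{simplevertices} with the elementary observation that a simple vertex of $H$ is a degree-$2$ cut vertex of $H$ and this survives the deletion of any vertex other than it or its two neighbours.
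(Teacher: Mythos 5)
Your overall bookkeeping (the count $\tfrac{2n}{3}+4-1-2$, which vertices can lose the property when $w_0$ is deleted, the isomorphism-invariance remark) matches the paper's, but the structural claim you build the proof on is false: a simple vertex $w$ is \emph{not} a cut vertex of $H$. By definition a simple vertex lies on the odd cycle $L$, so its two neighbours $w_1,w_2$ remain connected in $H-w$ via the long arc of $L$; in your proof by contradiction the path $P$ avoiding $w$ always exists (take $P$ to be the long arc of $L$ itself). Your parity patch does not close this hole either: the long arc of $L$ has length $L-2$, which is \emph{odd} (not even, as you assert), so the two closed walks $P+w_1ww_2$ and $P+(\text{long arc})$ indeed have opposite parities, but only the first one passes through $w$. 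In the case where $P+w_1ww_2$ is odd you obtain an even closed walk that avoids $w$, which contradicts nothing about the simplicity of $w$ (and with $P$ equal to the long arc, $P+w_1ww_2$ is just $L$, odd, so this case genuinely occurs). Consequently the subsequent transfer step, which deduces cut-vertexhood on the card from cut-vertexhood in $H$, starts from a false premise.

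The missing idea is that the ``lies on no cycle'' property must be established \emph{on the card}, not in $H$, and the reason it holds there is bipartiteness of $G$: a common card $G-v=H-w_0$ contains no odd cycle, so any cycle of the card through a simple vertex $w$ would be an even cycle of $H$ through $w$, contradicting simplicity. Hence every simple vertex that keeps degree $2$ on the card (all but $w_0$ and its at most two neighbours on $L$) lies on no cycle of the card and is therefore a degree-$2$ cut vertex of the card, giving the stated $\tfrac{2n}{3}+1$. This is the paper's one-line argument; your count and your observation that a degree-$2$ vertex on no cycle separates its two neighbours are both fine once this substitution is made.
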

\begin{proof}
    $H$ has at least $\frac{2n}{3}+4$ simple vertices, which by definition are of degree 2 and do not lie in any even cycle. Since common cards do not contain odd cycles, none of these vertices lie in a cycle on a common card. By deleting a vertex at most one vertex of degree 2 disappears and the degree of at most two vertices of degree 2 in the cycle $L$ decreases. This means that common cards have at least  $\frac{2n}{3}+4-1-2$ cut vertices of degree 2.
\end{proof}

We now show that $G$ also has a large cycle.
\begin{Lemma}
$G$ contains a cycle of size at least $\frac{5n}{6}+2$.
\end{Lemma}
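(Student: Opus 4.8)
The plan is to leverage the structure we have already extracted about $H$: the smallest odd cycle $L$ of $H$ has length $\geq \frac{5n}{6}+2$, every common card is obtained by deleting a vertex of $L$, and (Corollary \ref{max length even cycle}) every even cycle of $H$ has length at most $\frac{n}{3}-4$. By Lemma \ref{lotsdeg2} there are at least $\frac{2n}{3}+4$ cards in $D$ coming from degree-$2$ vertices of $H$, and by Lemma \ref{simplevertices} all of these come from \emph{simple} vertices of $L$. First I would fix one such card $C = G-v = H-w$ with $w$ simple; since $w$ has degree $2$ and lies on the odd cycle $L$, deleting $w$ leaves a path $P$ on $L-1 \geq \frac{5n}{6}+1$ vertices running through what used to be $L$, together with whatever branches hang off $L$. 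Crucially, because every even cycle of $H$ is short, the two neighbours $w_1,w_2$ of $w$ cannot be joined by any short detour, so $P$ is genuinely an induced path of length $\geq \frac{5n}{6}$ inside $C$, i.e.\ inside $G$ as well.

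Next I would transfer this long path back into $G$. The card $C$ lies in the deck of $G$, so $G$ contains a path $P'$ on at least $\frac{5n}{6}+1$ vertices (namely the image of $P$), plus the one extra vertex $v$ that was deleted. The goal is to show that adding $v$ back to $C$ closes $P'$ — or a large sub-path of $P'$ — into a cycle. Here I would use that $H-w = G-v$ has, by Corollary \ref{cutvertices}, at least $\frac{2n}{3}+1$ cut vertices of degree $2$ that lie on $P$; in $G$ the vertex $v$ must attach to the rest of the graph, and since $G$ is connected (or at least since the component containing $P'$ must absorb $v$ to reproduce the card structure), $v$ has degree $\geq 1$ into the big component. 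If $v$ had only one neighbour in the big component, then $G$ restricted to that component would be a tree-like attachment and $G$ would have at most the cycles already visible in $C$; but $C$ contains no odd cycle and only short even cycles, while we will separately know (from $e(G)=e(H)$, Lemma \ref{same number of edges}, together with the component/leaf count) that $G$ must contain a long cycle somewhere. So $v$ has at least two neighbours on the big component, and — arguing as in Lemma \ref{lotsdeg2} but now in $G$ — the two attachment points of $v$ on the long path $P'$ must be far apart (otherwise they create a short cycle in $G$, hence in $H$ by Observation \ref{small cycles}, contradicting that $H$'s short cycles all have length $\leq \frac{n}{3}-4$ while placing two near-adjacent attachment points would force an even cycle of moderate length). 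Therefore the arc of $P'$ between the two attachment points, together with $v$, forms a cycle of length at least $|P'| - (\tfrac{n}{3}-4) \geq \frac{5n}{6}+1 - \frac{n}{3}+4 = \frac{n}{2}+5 \geq \frac{5n}{6}+2$ — and to make this last inequality actually reach $\frac{5n}{6}+2$ I would instead bound the "wasted" arcs by the short-cycle bound $\frac{n}{3}-4$ applied to \emph{each} end, using that the attachment points of $v$ on $P'$ cannot be within $\frac{n}{3}-4$ of either endpoint of $P'$ without re-creating, in $H$, a cycle that Observation \ref{small cycles} forbids.

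The main obstacle I expect is the bookkeeping that pins down \emph{where} $v$ attaches to the long path $P'$ in $G$, and converting "short cycle in $G$" statements into contradictions via $H$. The subtlety is that $G$ is bipartite, so a short cycle in $G$ is automatically even and automatically present (by Observation \ref{small cycles}) as an even cycle of $H$ of the same length — but the short-cycle bound we proved is about even cycles of $H$, which is exactly what we need; so the logic closes, provided I am careful that the cycle I build in $G$ from $P'$ and $v$ is not itself one of these short even cycles (it is not, because it has length $\gg \frac{n}{3}-4$). A secondary nuisance is ensuring $v$ actually lies in the same component as the long path; this should follow because all of $D$'s cards come from deleting a vertex of the single big odd cycle $L$ of $H$, so $G-v$ has the same "one big component" structure, forcing $v$ into it. Once these points are handled, the length estimate $\frac{5n}{6}+1 - 2(\frac{n}{3}-4) = \frac{n}{6}+9$ is too weak, so I would sharpen it: the two end-arcs of $P'$ that $v$ fails to reach are each contained in a would-be even cycle of $H$ only if $v$'s attachment point is within the corresponding distance, and a more careful accounting — using that $w$ being simple means $P$ has \emph{no} chords at all near its middle — shows $v$ attaches within distance $\leq \frac{n}{6}+O(1)$ of \emph{both} ends, yielding a cycle of length $\geq \frac{5n}{6}+1 - O(1) \geq \frac{5n}{6}+2$, as required.
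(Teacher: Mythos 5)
Your construction has a genuine gap at its central step: you assert that the two neighbours of $v$ must lie on the long path $P'$ and must be far apart, "otherwise they create a short cycle in $G$, hence in $H$, contradicting" the bound of Corollary \ref{max length even cycle}. But there is no contradiction there. If the two attachment points of $v$ are at distance $d \leq \frac{n}{3}-6$ along $P'$ (or if one neighbour is not on $P'$ at all), then $v$ simply closes a short even cycle of length $\leq \frac{n}{3}-4$, which is perfectly consistent with Observation \ref{small cycles} and Corollary \ref{max length even cycle} — $H$ is allowed to have such cycles. So nothing forces the attachment points towards the ends of $P'$, and the cycle you build may well be short. Two further problems compound this: (i) the step "we will separately know \dots that $G$ must contain a long cycle somewhere" is circular, since that is exactly the statement being proved; and (ii) your own length estimates come out to $\frac{n}{2}+5$ and $\frac{n}{6}+9$, neither of which is $\geq \frac{5n}{6}+2$, and the proposed "more careful accounting" is not an argument. (The correct way to finish, had the earlier steps worked, is not to push the length all the way to $\frac{5n}{6}+2$ directly but to observe that cycle lengths in $G$ live in $[3,\frac{n}{3}-4] \cup [\frac{5n}{6}+2,n]$ by Observation \ref{small cycles} and Corollary \ref{max length even cycle}, so any cycle of length $> \frac{n}{3}-4$ is automatically large.)

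The paper avoids all of this by arguing by contradiction with cut vertices rather than by constructing the cycle. Assume $G$ has no large cycle; then every cycle of $G$ has length at most $\frac{n}{3}-4$. A common card coming from degree-$2$ vertices has, by Corollary \ref{cutvertices}, at least $\frac{2n}{3}+1$ cut vertices of degree $2$; re-inserting the deleted degree-$2$ vertex $w$ can place at most $\frac{n}{3}-5$ of them onto a cycle, so $G$ retains at least $\frac{n}{3}+6$ degree-$2$ cut vertices. One of these corresponds to a common card, which would then be disconnected — contradicting that every common card obtained by deleting a degree-$2$ vertex of the odd cycle $L$ of $H$ does not gain a component. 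This dichotomy ("either $v$ puts many cut vertices on a cycle, which is then necessarily long, or many cut vertices survive into $G$") is exactly the fallback your proposal is missing when the attachment points happen to be close together. If you want to salvage your approach, you need to incorporate that second branch rather than insisting the first one always occurs.
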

\begin{proof}
    Suppose not. Then Lemmas \ref{small cycles} and \ref{max length even cycle} imply that the maximum length of any cycle in $G$ is at most $\frac{n}{3}-4$. Let $G-w=H-v$ be a common card corresponding to vertices of degree 2. By Corollary \ref{cutvertices} this card has at least $\frac{2n}{3}+1$ cut vertices of degree 2. However, by adding back the node $w$ of degree 2, at most $\frac{n}{3}-5$ cut vertices will be put in a cycle as every cycle of $G$ contains at most $\frac{n}{3}-4$ vertices. This means that $G$ has at least $\frac{2n}{3}+1-(\frac{n}{3}-5)= \frac{n}{3}+6$ cut vertices of degree 2. In particular, at least one of these cut vertices $v$ corresponds to a common card $G-v=H-w$. This card is disconnected as $v$ was a cut vertex. However, for every common card $G-v=H-w$, where $\deg(w)=2$, is connected as $w$ must lie in $L$. Hence, the vertex $v$ does not correspond to a common card, a contradiction.
\end{proof}

Let $K$ be the smallest large cycle of $G$.  $G$ is bipartite, so the cycle $K$ has even length. Since the even cycles in $H$ contain at most $\frac{n}{3}-4$ vertices, this cycle is not visible on any common card, implying that all vertices $v \in G$ corresponding to a common card lie in $K$. Let $C$ be an arbitrary small cycle in $G$ and/or $H$.
\begin{Lemma}\label{distance}
    For every two vertices $u,v \in K \cap C$ (respectively $L \cap C$) the distance in $K$ (or $L$) is smaller or equal to the distance in $C$.
\end{Lemma}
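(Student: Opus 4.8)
The plan is to argue by contradiction at the level of a single small cycle. Suppose there are two vertices $u,v \in K \cap C$ whose distance along $K$ is strictly greater than their distance along $C$. The two arcs of $K$ between $u$ and $v$ together form the large even cycle $K$, and by assumption the shorter of these two arcs still has length at least $d_C(u,v)+1$. Replace this shorter $K$-arc by a shortest $u$--$v$ path inside $C$ (which has length $d_C(u,v)$). This produces a closed walk that is strictly shorter than $K$, and from it one extracts a genuine cycle. The same construction works verbatim with $L$ in place of $K$ and $H$ in place of $G$, using that both vertices of $K\cap C$ (or $L\cap C$) lie on the respective large cycle and that $C$ is connected.

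The key steps, in order, would be: first, fix the two arcs $A_1,A_2$ of $K$ determined by $u$ and $v$, with $|A_1|\le|A_2|$, so that $|A_1|\ge |A_2|$ is impossible and in fact $|A_1| > d_C(u,v)$ by the negation of the claim. Second, take a shortest $u$--$v$ path $R$ inside $C$; since $C$ is a cycle this path has length exactly $d_C(u,v) < |A_1|$, but I should note $R$ uses only edges of $C$. Third, form the closed walk $W = R \cup A_2$ (going from $u$ to $v$ along $R$ and back along $A_2$); its length is $d_C(u,v)+|A_2| < |A_1|+|A_2| = |K|$. Fourth, observe $W$ is a closed walk in $G$ (all its edges lie in $C \cup K \subseteq G$, and in the $L$-case all edges lie in $C \cup L \subseteq H$); a closed walk of odd parity contains an odd cycle and of even parity... — here I must be slightly careful, since $W$ has the parity of $d_C(u,v)+|A_2|$ and I want a \emph{cycle} shorter than $K$. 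The cleanest route: $W$ is a nontrivial closed walk, hence contains \emph{some} cycle $C'$, and every cycle contained in $W$ has length at most $|W| < |K|$. Thus $G$ has a cycle $C'$ with $|C'| < |K|$, so $C'$ is a small cycle of $G$ (length $\le \frac{n}{3}-4 < \frac{5n}{6}+2 \le |K|$ — actually just $|C'|<|K|$ and $K$ is the \emph{smallest} large cycle suffices to force $C'$ small).

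Fifth, and this is where the real content sits, I would derive the contradiction. We have $|A_1| > d_C(u,v) \ge $ (distance in $K$ uses $A_1$) — wait, I need to make sure I have actually contradicted something. The point is subtler: having produced a short cycle $C'$ is not yet absurd. Instead the right move is to choose $R$ to be the shorter of the two $C$-arcs between $u$ and $v$, i.e. $|R| = d_C(u,v) \le |C|/2 \le \frac{1}{2}(\frac{n}{3}-4)$, and then note that the closed walk $W = A_1 \cup (\text{the }C\text{-arc }R)$ has length $|A_1| + d_C(u,v)$. If $|A_1|$ is already less than half of $K$... hmm. Let me restate: the genuinely clean contradiction is that $W$ contains a cycle through both $u$ and $v$ — no. I think the correct and intended argument is: since $W = R \cup A_2$ has length $< |K|$ and $K$ is by definition the smallest large cycle while any cycle inside $W$ is small, we use that $u$ and $v$ both lie in the small cycle $C'$ — no, they needn't. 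The honest plan: this lemma is used to control how small cycles sit inside the large cycle, and the contradiction must come from the \emph{existence} of a short even (or odd) cycle violating Corollary~\ref{max length even cycle} (resp. the minimality of $L$). So: if $|A_1| > d_C(u,v)$, then replacing $A_1$ by $R$ in $K$ gives a closed walk of length $d_C(u,v) + |A_2| < |K|$ whose parity equals that of $|K|$ iff $d_C(u,v) \equiv |A_1| \pmod 2$; I would split on whether $C$ is even or odd and on the parity of the arcs, extract from $W$ a cycle of the appropriate parity and length $< |K|$, and conclude it is a small cycle of $G$, hence (by Lemma~\ref{small cycles} / Corollary~\ref{max length even cycle}) has length $\le \frac{n}{3}-4$, which combined with the geometry forces $|A_1| \le \frac{n}{3}-4 + d_C(u,v) \le$ something contradicting $|A_1| \ge |K| - |A_2| \ge |K| - |A_1|$, i.e. $|A_1| \ge |K|/2 \ge \frac{5n}{12}+1$.

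The main obstacle, as the paragraph above shows, is the parity bookkeeping: $K$ is even and $L$ is odd, $C$ may be even (it is, in the relevant applications, since small cycles are even), and the two arcs $A_1,A_2$ of $K$ have complementary parities only when $|K|$ is odd — for even $|K|$ both arcs have the same parity. So when I cut out $A_1$ and glue in the shorter $C$-arc $R$, whether I land on an even or odd closed walk depends on $|A_1| \bmod 2$ versus $|R| \bmod 2$, and I must ensure the cycle I extract is short enough to contradict Corollary~\ref{max length even cycle} (for an even cycle) or the minimality of $L$ (for an odd one). I expect the cleanest writeup uses $|R| = d_C(u,v) \le \lfloor |C|/2\rfloor$ and $|A_2| \le |K| - |A_1| \le |K| - (d_C(u,v)+1)$, so the closed walk has length $\le |K| - 1$, strictly less than $|K|$; then in the $K$-case every cycle it contains is a small even-or-odd cycle of $G$ of length $< |K|$, but $G$ is bipartite so it has no odd cycle at all and no even cycle longer than $\frac{n}{3}-4$, forcing length $\le \frac{n}{3}-4$, whence $|A_1| \le d_C(u,v) + (\frac{n}{3}-4) \le \frac{n}{6} - 2 + \frac{n}{3} - 4 = \frac{n}{2} - 6 < \frac{5n}{12} \le |K|/2 \le |A_1|$, the desired contradiction; the $L$-case is symmetric using that odd cycles of $H$ all have length $\ge L = |K_H|$.
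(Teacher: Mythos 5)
Your overall strategy is the paper's: swap the short $K$-arc between $u$ and $v$ for the short $C$-arc, and exploit the gap between the small-cycle bound $\frac{n}{3}-4$ and the large-cycle bound $\frac{5n}{6}+2$. But the contradiction you finally commit to does not hold. You write $|A_1| \le \frac{n}{2}-6 < \frac{5n}{12} \le |K|/2 \le |A_1|$, and both of the last two links are false: $\frac{n}{2}-6 < \frac{5n}{12}$ fails for all $n \ge 72$, and since $A_1$ is the \emph{short} arc of $K$ you have $|A_1| \le |K|/2$, not $|A_1| \ge |K|/2$. So as written no contradiction is derived. The bound you should be invoking applies to the \emph{long} arc $A_2$, which survives the swap: the new cycle $R \cup A_2$ has length at least $|A_2| \ge |K|/2 \ge \frac{5n}{12}+1 > \frac{n}{3}-4$, so it cannot be a small cycle; yet its length is $d_C(u,v)+|A_2| < |A_1|+|A_2| = |K|$, and every cycle of $G$ shorter than $K$ is small (by minimality of $K$ among large cycles, together with Corollary \ref{max length even cycle}). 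That is the contradiction, and it is exactly the paper's accounting in contrapositive form: the paper notes the swapped-in arc would have to be at least $\frac{5n}{6}+2-(\frac{n}{3}-4)=\frac{n}{2}+6$ shorter than the short arc of $K$, which is impossible since the latter is at most $\frac{n}{2}$.

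Your extended parity worry is a red herring: you never need the extracted cycle to have a prescribed parity. $G$ is bipartite, so all its cycles are even and any cycle shorter than $K$ has length at most $\frac{n}{3}-4$; in $H$, any cycle shorter than $L$ is even (since $L$ is the smallest odd cycle) and hence also at most $\frac{n}{3}-4$ by Corollary \ref{max length even cycle}. One caveat that affects both your writeup and the paper's: $R \cup A_2$ is a priori only a closed walk, and extracting ``some cycle'' from it gives an upper bound on that cycle's length, which is the wrong direction for the argument; one needs to note that the extracted cycle can be taken to contain a long sub-arc of $A_2$ (or simply that $R$ and $A_2$ internally disjoint can be arranged), so this should be addressed rather than waved at.
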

\begin{proof}
    We only prove the statement for $K$ as the proof for $L$ is the same. If we replace the short arc of $K$ with the short arc in $C$, the cycle should either become larger (implying that the short arc in $C$ is larger than the short arc in $K$) or smaller. 
    However, if it becomes smaller we must obtain a small cycle by definition of $K$. This means that the small arc in $C$ is at least $\frac{5n}{6}+2-(\frac{n}{3}-4) \geq \frac{n}{2}+6$ smaller than the small arc in $K$. However, since the length of $K$ is at most $n$, the small arc has size at most $\frac{n}{2}$, a contradiction.
\end{proof}

\begin{Cor}\label{interval}
    For every small cycle $C$, the vertices of $K \cap C$ (respectively $L \cap C$) lie in one interval of size at most $\frac{C}{2}+1$ in $K$ (respectively $L$).
\end{Cor}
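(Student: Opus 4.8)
The plan is to derive the corollary directly from Lemma~\ref{distance}, exploiting the large gap between the length of $K$ (respectively $L$), which is at least $\frac{5n}{6}+2$, and the length of a small cycle $C$, which by Corollary~\ref{max length even cycle} and Observation~\ref{small cycles} is at most $\frac{n}{3}-4$. I will argue for $K$; the argument for $L$ is word for word the same. Write $S=K\cap C$ and $d=\lfloor C/2\rfloor$. If $|S|\le 1$ there is nothing to prove, so assume $|S|\ge 2$. Since any two vertices of the cycle $C$ are at distance at most $d$ in $C$, Lemma~\ref{distance} immediately gives that every pair of vertices of $S$ is at distance at most $d$ in $K$.

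Next I would fix a vertex $s_0\in S$ and consider the arc $A$ of $K$ consisting of $s_0$ together with the $d$ vertices on each side of it, so $A$ has $2d+1$ vertices; by the distance bound above, every vertex of $S$ lies on $A$. Let $a$ be the vertex of $S$ farthest from $s_0$ in the clockwise direction inside $A$, at clockwise distance $\alpha\le d$, and let $b$ be the vertex of $S$ farthest counterclockwise, at distance $\beta\le d$ (take $a=s_0$, resp.\ $b=s_0$, if the corresponding side of $A$ contains no vertex of $S$). The arc of $K$ running from $b$ through $s_0$ to $a$ has $\alpha+\beta\le 2d$ edges and contains all of $S$.

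The key step is then to bound $\alpha+\beta$. The two arcs of $K$ between $a$ and $b$ have $\alpha+\beta$ and $|K|-\alpha-\beta$ edges, and since $|K|\ge\frac{5n}{6}+2>4d$ — this is precisely where the gap between $|K|$ and $C\le\frac{n}{3}-4$ enters — we get $|K|-\alpha-\beta\ge |K|-2d>2d\ge\alpha+\beta$, so the shorter of the two arcs between $a$ and $b$ is the one through $s_0$ and $\mathrm{dist}_K(a,b)=\alpha+\beta$. Applying Lemma~\ref{distance} once more, $\alpha+\beta=\mathrm{dist}_K(a,b)\le\mathrm{dist}_C(a,b)\le d$. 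Hence the arc from $b$ through $s_0$ to $a$ has at most $d$ edges, i.e.\ at most $d+1\le\frac{C}{2}+1$ vertices, and it contains all of $K\cap C$; this is exactly the claimed interval. Repeating the computation with $L$ in place of $K$ completes the proof.

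I do not anticipate a serious obstacle: the only point requiring care is the "no wrap-around" argument, i.e.\ checking that $|K|>4d$ (equivalently $\frac{5n}{6}+2>\frac{2n}{3}-8$) genuinely holds so that the short arc between $a$ and $b$ must pass through $s_0$; the rest is routine bookkeeping with arc lengths on a cycle.
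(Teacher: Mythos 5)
Your proposal is correct and follows essentially the same route as the paper: apply Lemma~\ref{distance} to bound pairwise distances in $K$ by $\frac{C}{2}$, fix a vertex of $K\cap C$, and use $2C<|K|$ to rule out wrap-around so that the extreme vertices on either side span an arc of at most $\frac{C}{2}$ edges. Your write-up just makes explicit the bookkeeping that the paper's shorter argument leaves implicit.
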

\begin{proof}
     We only prove the statement for $K$ as the proof for $L$ is the same. Corollary \ref{distance} gives that every two vertices in $C \cap K$ lie within distance $\frac{C}{2}$ in $K$. Fix a vertex $v \in K \cap C$, then every vertex lies within distance $\frac{C}{2}$ from a vertex $v$. Since $2C \leq \frac{2n}{3}-8 < K$, the shortest arc between vertices left and right from $v$ is always via $v$. Hence, all the vertices in $K \cap C$ lie in an interval of size at most $\frac{C}{2}+1$.
\end{proof}

Similar to the definition in $H$ we define a simple vertex of $G$ to be a vertex in the cycle $K$ such that this vertex does not lie in any small cycle of $G$. We now prove that every vertex of degree 2 corresponding to a common card of $G$ and $H$ is simple.

\begin{Lemma}\label{again simple vertices}
    Every vertex of degree 2 in $G$ corresponding to a common card is simple.
\end{Lemma}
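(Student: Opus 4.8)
The plan is to mirror the argument used for $H$ in Lemma~\ref{simplevertices}, now transported to the bipartite graph $G$ using the structural facts already established. Suppose for contradiction that some vertex $v$ of degree $2$ in $G$ corresponds to a common card $G-v = H-w$ but is not simple. Then $v$ lies in the large even cycle $K$ (since every vertex corresponding to a common card lies in $K$) and also in some small cycle $C$ of $G$. Let $v_1$ and $v_2$ be the two neighbours of $v$; because $\deg(v)=2$, both $v_1$ and $v_2$ lie in $C$ as well, and of course in $K$. The idea is to build an odd closed walk in $G-v$, contradicting bipartiteness of $G$ (and of its card).

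The key steps, in order, are as follows. First, invoke Corollary~\ref{interval} to control where $C$ meets $K$: the vertices of $C \cap K$ lie in one short interval of $K$ of size at most $\frac{C}{2}+1$, so $v_1$ and $v_2$ are at distance at most $\frac{C}{2}$ along $K$, and moreover the short $K$-arc between $v_1$ and $v_2$ passes through $v$ (this is exactly the `shortest arc is via $v$' phenomenon from the proof of Corollary~\ref{interval}, valid since $C$ is tiny compared to $K$). Second, apply Lemma~\ref{distance}: the distance between $v_1$ and $v_2$ in $K$ is at most their distance in $C$, so the short $v_1 v_2$ arc of $C$ is at least as long as the short $v_1 v_2$ arc of $K$. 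Third, consider the closed walk obtained from $K$ by replacing its short $v_1 v_2$ arc (the length-$2$ path $v_1 v v_2$) with the other, long $v_1 v_2$ arc of $K$; this walk avoids $v$ entirely. Its length has the same parity as $|K|$ minus... — more precisely, compare parities: $K$ has even length, the arc $v_1 v v_2$ has length $2$, and one checks that swapping it for the complementary arc together with the short arc of $C$ produces an odd closed walk lying in $G-v$. An odd closed walk contains an odd cycle, so $G-v = H-w$ contains an odd cycle, contradicting that $G$ is bipartite.

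I expect the parity bookkeeping in the third step to be the only delicate point, and the cleanest way to handle it is to argue directly inside $G-v$: the path $v_1 v v_2$ and the long $K$-arc from $v_2$ back to $v_1$ together form the cycle $K$ of even length, so the long $K$-arc has odd length; meanwhile, $v$ lies on the small cycle $C$, and the two $v_1 v_2$ arcs of $C$ have lengths summing to $|C|$, with the short one being the path $v_1 v v_2$ of length $2$ only in the degenerate case $|C|=4$ with $v$ at the midpoint — in general $C$ need not pass through $v$ via its short arc, so instead one uses $C$ itself: since $v$ and both neighbours $v_1,v_2$ lie on the even cycle $C$, the arc of $C$ from $v_2$ to $v_1$ not through $v$ has the same parity as $|C|-2$, i.e. even, hence of even length. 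Splicing the odd long-$K$-arc with this even $C$-arc gives an odd closed walk through $v_1$ avoiding $v$, as desired. The main obstacle is thus purely to verify that such a splice lives in $G-v$ and has odd total length; everything else is an immediate consequence of Corollaries~\ref{interval} and the distance lemma already proved.
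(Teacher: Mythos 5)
Your overall setup (assume $v$ is not simple, locate the small cycle $C$ through $v$ and its neighbours $v_1,v_2$ inside the large cycle $K$, and splice arcs of $C$ and $K$ to build a forbidden substructure in $G-v$) matches the paper's, but the target of the contradiction is wrong, and this is not a repairable detail. You aim to produce an \emph{odd} closed walk in $G-v$ and contradict bipartiteness. Since $G$ is bipartite by hypothesis, \emph{every} closed walk in $G$ (and in $G-v$) is even; the assumption that $v$ is not simple does not change this, so no splice of arcs of $G$ can ever have odd length. Concretely, your parity bookkeeping already fails at the first step: the path $v_1vv_2$ has length $2$ and $K$ is even, so the complementary $K$-arc has length $\lvert K\rvert-2$, which is \emph{even}, not odd; gluing it to the even $C$-arc avoiding $v$ (of length $\lvert C\rvert-2$) gives an even closed walk, as it must. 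The asymmetry with Lemma~\ref{simplevertices} is essential: there the splice lives in the non-bipartite graph $H$ and produces an odd closed walk in $H-w$, contradicting that $H-w$ equals a card of the bipartite $G$; that argument cannot be ``transported'' to $G$.

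The paper's actual obstruction is a \emph{large even} cycle rather than an odd one. Taking $v_1,v_2$ to be the vertices of $K\cap C$ at maximal $K$-distance, Lemma~\ref{distance} forces $v$ onto the short $K$-arc between them and forces the long $K$-arc to avoid $C$; replacing the short $K$-arc by the $C$-arc not through $v$ yields a cycle in $G-v$ that is at least as long as $K$, hence of length at least $\frac{5n}{6}+2$. But by Corollary~\ref{max length even cycle} every even cycle of $H$, and so of any card $H-w$, has length at most $\frac{n}{3}-4$; therefore $G-v$ cannot be a common card. If you want to keep your structure, replace your third step by this length comparison; the first two steps of your outline (the use of Lemma~\ref{distance} and Corollary~\ref{interval} to locate $v$ on the short arc) are sound and are exactly what the paper does.
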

\begin{proof}
    Suppose $v$ is a vertex of degree 2 in $G$ corresponding to a common card, but $v$ is not simple. Then $v$ lies in a small cycle $C$ (and a large cycle). Since $v$ has degree 2, both neighbors of $v$ also lie in $C$ and $K$.  Let $v_1$ and $v_2$ be the vertices in $K \cap C$ that have the largest distance between them in $K$.\\
    Because $C \leq \frac{n}{3}-4$ and every distance in $K$ is smaller than the distance in $C$, we obtain $d_{K}(v_1,v_2) \leq \frac{n}{6}-2$. Moreover, there are no other vertices on the long arc $v_1v_2$ of $K$ that also lie in $C$ as those vertices will have a larger distance to either $v_1$ or $v_2$ than $d_{K}(v_1,v_2)$. So $v$ lies on the short arc of $K$. If we now replace the short arc $v_1v_2$ in $K$ with the arc $v_1v_2$ of $C$ without $v$, we get a cycle that does not contain $v$. This new arc of $C$ is larger than the short arc in $K$ implying that this cycle is larger than $K$ and thus not small. Since $H$ does not contain any large even cycles and $G-v$ contains a large even cycle, $G-v$ is not a card of $H$.
\end{proof}

We will now combine all the information from the previous lemmas to derive a contradiction.

\thmbipartite*
   \begin{proof}  
We use proof by contradiction. Suppose there exists a bipartite graph $G$ and a non-bipartite graph $H$ that have a subdeck $D$ of at least $\frac{5n}{6}+2$ common cards.
   
 By Lemmas \ref{lotsdeg2} and \ref{again simple vertices} at least $\frac{2n}{3}+4$ common cards in $D$ correspond to a  simple vertices in $H$ and $G$ of degree 2. Hence by the proof of Lemma \ref{same number of edges}, we can determine the number of edges in $G$ and $H$ based on $D$ and thus we can determine the degrees of the missing vertices 
 for all cards in $D$.

We will now prove that $K-1$ and $L-1$ are both equal to the smallest longest path of all common cards corresponding to a vertex of degree 2. We will prove this statement for $K$ as the proof for $L$ will be the same.

We start by observing the following. The neighbors of a simple vertex $v$ must have distance $K-1$ in $G-v$ as otherwise, $v$ would lie in a smaller cycle than $K$. Hence, the diameter on all cards corresponding to a vertex of degree 2 is at least $K-1$.

We will use a similar coloring as in Figure \ref{Kleuring 1}. This means that for every branch of size $k$ outside our fixed cycle $K$ connected to a vertex $v$ in $K$, we color all vertices on $K$ with distance at most $k$ to $v$ red. Since the vertices of a path of length $m$ outside the cycle are only connected to vertices that lie in an interval of size $m+1$ on the cycle by Lemma \ref{interval}, we color at most $3k+2 \leq 5k$ vertices red for every branch of size $k$. 

This indicates that we color at most $5(n-K)$ vertices on $K$ red. Hence, there are at least \[K-5(n-K)= 6K-5n=6\left(K-\frac{5n}{6}-2\right)+12\] white vertices on the cycle. Since at most $K-\frac{5}{6}n-2$ vertices on the cycle of $G$ do not correspond to a common card, there exists a white vertex $v$ corresponding to a card in $D$. By our coloring, we know that this vertex has degree 2 and is therefore simple. 

 Next, we show that for every white vertex $v$ the diameter of $G-v$ is exactly $K-1$. Suppose not. Let $x$ and $y$ be two vertices in $G-v$. Let $v_x$ and $v_y$ be the vertices closest to $x$ and $y$ such that $v_x$ and $v_y$ lie on the cycle $L$ and such that $v_x$ lies closer to $v_1$ and $v_y$ to $v_2$. Then as $v$ is white, we see $d(x,v_x) < d(v,v_x)$ and $d(y,v_y)<d(v,v_y)$ implying that \[d(x,y) \leq d(x,v_x)+d(v_x,v_y)+d(v_y,y) \leq d(v_1,v_x)+d(v_x,v_y)+d(v_y,v_2) =  d(v_1,v_2)=K-1,\]
where the second to last equality follows from the fact that the unique path between $v_1$ and $v_2$ is the path corresponding to the large arc of the cycle.

Hence, $K-1$ is equal to the minimal diameter of (common)cards $G-v$, where $v$ has degree 2. Similarly, we can prove that $L-1$ is equal to the minimal diameter of (common)cards $H-w$, where $w$ has degree 2.
For a common card $G-v=H-w$, the vertex $v$ has degree 2 if and only if $w$ has degree 2. So in both cases, we take the minimum over the same set of cards. Thus, $K-1=L-1$ implying $K=L$, a contradiction.
\end{proof}

So we have proved an upper bound of $\frac{5n}{6}+2$ for determining whether a graph on $n$ vertices is bipartite. Now, we consider possible lower bounds. Notice that if we take an element of Family 1 with $n \equiv 1 \mod 4$ vertices, then $G$ contains an odd cycle of length $\frac{n+1}{2}$, while $F$ does not contain such a cycle. This means that we found an infinite family of pairs of bipartite graphs $F$ and non-bipartite graphs $G$ that have $\lfloor \frac{n}{2}\rfloor+1$ common cards. 

Moreover, for a member of Family 3 on $n \equiv 1 \mod 4$ vertices, the graph $G$ has an odd cycle of length $\frac{n+1}{2}$, while $F$ does not contain any cycles. This gives our second infinite family of pairs of bipartite graphs $F$ and non-bipartite graphs $G$ that have $\lfloor \frac{n}{2}\rfloor+1$ common cards.

Figure \ref{fig:bipartiet}contains another infinite family of pairs of graphs $G$ and $H$, where $G$ is bipartite and $H$ is not and such that $G$ and $H$ have $\frac{n+1}{2}$ common cards. What is special about this example is the fact that $G$ and $H$ both contain cycles and that both are connected. We can extend this example to a whole family by adding extra diamonds to $G$ and $H$. Since every diamond increases the large cycle of $H$ by $3$, $H$ is not bipartite if it contains an even number of diamonds. However, $G$ is always bipartite so this gives indeed an infinite family of pairs.

\begin{figure}[ht]
    \centering
    \includegraphics[width = 15 cm]{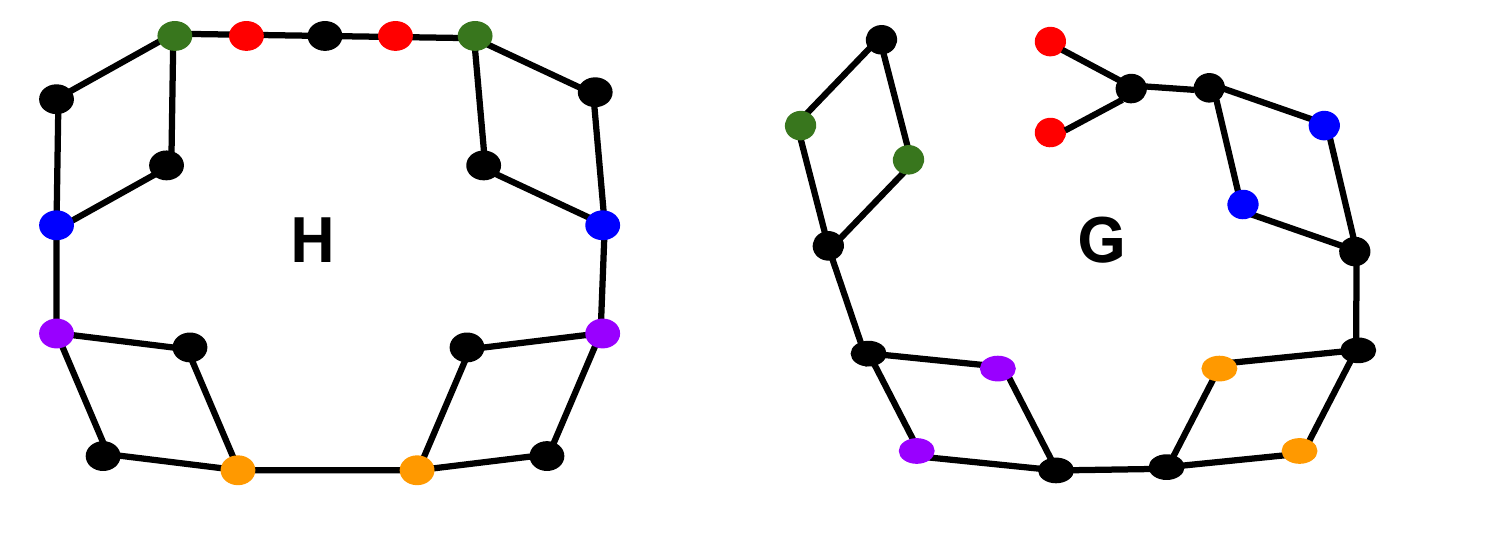}
    \caption{Another example of a bipartite graph $G$ and an non-bipartite graph $H$ that have $\frac{n+1}{2}$ common cards. For every two vertices $v \in G$ and $w \in H$ with the same color (not black), we have $G-v \cong H-w$.}\label{fig:bipartiet}
\end{figure}

\section{Discussion}

In this article, we have developed some machinery to determine whether a graph has a `big' hidden cycle if we have $n-\epsilon n$ arbitrary cards of a graph $G$ on $n$ vertices. The main idea was to mark vertices that lie close to big branches or other cycles as bad (here always colored red). This means that for good vertices $v$, most of the vertices that lay in the `big hidden' cycle are lying in the longest path. This also implies that on every card it is always apparent what most of the branches/small cycles around the big cycle in $G-v$ are.

However, in order to use this information, the subdeck must contain at least one common card corresponds to a good vertex. This requirement led to different bounds for different properties. We were for example able to show that we can determine whether a connected graph on $n$ vertices has a cycle based on any $\frac{n}{2}-\epsilon \sqrt{n}$ cards, whereas we needed at least $\frac{2n}{3}+2$ cards to determine the length of a smallest cycle. Moreover, if we wanted to determine whether a graph had an odd cycle we needed at least $\frac{5n}{6}+2$ cards, as we had to ensure that there was a big gap between `large' and `small cycles' as otherwise our coloring would not work.

We believe that these colorings can also be used to determine other properties that involve the (non-)existence of a few cycles based on any $n-\epsilon n$ cards. The main idea is to choose enough cards such that every small cycle is at least once visible.  Hence, we only need to focus on the question of whether there is a big hidden cycle. For example, we believe it is possible to determine whether a graph has at most one cycle or more than one cycle based on any $n-\epsilon n$ cards. However, proving this requires taking into account the possibility that there are two small cycles of the same length. 

Although we have found upper bounds for girth and bipartiteness of the form of $n-\epsilon n$, we suspect that the true upper bounds are far lower. We believe that the lower bounds of $\lfloor \frac{n}{2}\rfloor+2$ are in fact the real bounds. Therefore, we propose the following conjectures.

\begin{Con}
    For $n$ large enough, the number of common cards between a graph $G$ and $H$, both on $n$ vertices, with different girth is at most $\lfloor \frac{n}{2}\rfloor+1$.
\end{Con}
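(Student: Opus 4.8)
Write $g=\mathrm{girth}(G)<g'=\mathrm{girth}(H)$ (there is nothing to prove if $g=g'$), and suppose $G$ and $H$ share a subdeck $D$ of at least $\lfloor n/2\rfloor+2$ common cards. Every card in $D$ is a card of $H$, hence has girth at least $g'$. If $g'=\infty$, i.e.\ $H$ is a forest, then $G$ is a non-forest and Theorem~\ref{main forest} already gives the bound, so assume $g'<\infty$. If $g\le\lfloor n/2\rfloor+1$, then a shortest cycle of $G$ has length at most $\lfloor n/2\rfloor+1$, hence is destroyed by at most $\lfloor n/2\rfloor+1$ of the cards in $D$ and therefore survives on at least one of them; that card is also a card of $H$, forcing $\mathrm{girth}(H)\le g$, a contradiction. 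Hence $g\ge\lfloor n/2\rfloor+2$, and in particular every card of $D$ has girth at least $g'>g\ge\lfloor n/2\rfloor+2$, which for $n$ large exceeds $2\log_2 n$.

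The second step is to extract, just as in Section~\ref{proof Thm} and in the proof of Theorem~\ref{thm bipartite}, the structure around the two hidden shortest cycles. A short span estimate (two cycles of length at least $\lfloor n/2\rfloor+2$ cannot both fit in $n$ vertices, and three independent such cycles are even further out of reach) shows that $G$ and $H$ have very low cycle rank; since a card of $D$ has girth strictly larger than $g$ and, viewed as a card of $G$, it must destroy every shortest cycle of $G$, while two distinct $g$-cycles of $G$ meet in at most $g/2<\lfloor n/2\rfloor+2$ vertices, $G$ has a \emph{unique} shortest cycle $Z$ of length $g$ and every card of $D$ corresponds to a vertex of $Z$; thus at most $g-\lfloor n/2\rfloor-2\le\lceil n/2\rceil-2$ vertices of $Z$ fail to give a common card, and symmetrically for $H$ with its unique shortest cycle $Z'$ of length $g'$. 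A degree count in the style of Lemmas~\ref{card from leaf} and~\ref{min leaves} (all leaves of $G$ lie off $Z$, so $G$ has at most $n-g\le\lceil n/2\rceil-2$ vertices of degree $\ge 3$) produces a common card $G-v=H-w$ with $\deg_G(v)=2$, and symmetrically one with a degree-$2$ vertex on the $H$-side; comparing how $G$ and $H$ break into components on such a card forces $e(G)=e(H)$, so on every common card the $G$-side and $H$-side deleted vertices have equal degree.

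The third step is the colouring of Lemma~\ref{bound max path}: colour a vertex of $Z$ red when it lies within the relevant distance of a branch of $G$, charging a branch containing $k$ vertices of degree $\ge 2$ to at most $2k$ red vertices. When the off-cycle mass of $G$ is spread so that it has at least about $n/4$ leaves, the red vertices are few enough that $Z$ has more white vertices than it has vertices missing a common card, so some card of $D$ corresponds to a white — hence degree-$2$ — vertex $v$; on $G-v$ every branch is too far from $v$ to lengthen $Z-v$, so the longest path of $G-v$ has length exactly $g-1$, and in fact $g-1$ is the minimum, over all cards of $D$ corresponding to a degree-$2$ vertex, of the longest-path length. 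Running the identical argument inside $H$ identifies $g'-1$ as that same minimum over the same set of cards (the cards are the same and the degrees match). Hence $g-1=g'-1$, contradicting $g<g'$.

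The point I expect to be the real obstacle is precisely this last step when the off-cycle mass of $G$ (or of $H$) is concentrated in a few long pendant branches, so that the number of leaves can be as small as $1$ while $g$ is only guaranteed to satisfy $\lfloor n/2\rfloor+2\le g<g'\le n$ rather than $g>\tfrac 23 n$: then the naive count produces no white vertex and Lemma~\ref{max een cycle} cannot be invoked to force unicyclicity either. Closing this gap seems to need the endgame machinery of Theorem~\ref{main thm trees}: when $G$ has a long pendant branch its cards from $Z$ are rigid ``spiders/caterpillars'', so that branch occupies genuinely different positions across the $\ge\lfloor n/2\rfloor+2$ cards of $D$, whereas inside $H$ it would have to sit in essentially the same place on all of them — tracking the locations of the large branches (the analogue of the ``green vertices'' argument) should again pin $g$ and $g'$ close enough to collide, and where even that is too weak one would substitute the sharper colouring sketched in the Remark following Theorem~\ref{main thm trees}. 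One also has to separately handle, or rule out, the possibility that $G$ or $H$ carries a second large cycle (cycle rank $2$), which the vertex count alone does not quite exclude in the regime $g,g'\approx\tfrac23 n$; there every common card still has at most one cycle, of length exactly $g'$, and the argument must be rerun with $Z$ replaced by this configuration. Packaging all of this uniformly over the whole range $\lfloor n/2\rfloor+2\le g<g'\le n$ is the crux.
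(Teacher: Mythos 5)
First, a point of order: this statement is posed in the paper as an \emph{open conjecture} in the Discussion section; the paper offers no proof of it, only the much weaker Theorem~\ref{thm girth} (any $\frac{2n}{3}+1$ cards determine the girth). So there is no proof of the paper's to compare yours against, and what you have written is, as you yourself label it, a strategy rather than a proof.

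Your opening reductions are sound and match the paper's general philosophy: if $g\le\lfloor n/2\rfloor+1$ some card in $D$ retains a shortest cycle of $G$ and forces $g'\le g$; hence $g\ge\lfloor n/2\rfloor+2$, the shortest cycle $Z$ of $G$ is unique (two distinct $g$-cycles overlap in at most $g/2+1<\lfloor n/2\rfloor+2$ vertices), and every card of $D$ corresponds to a vertex of $Z$ (and of $Z'$ in $H$). But the genuine gap is exactly where you flag it, and it is not a loose end but the entire difficulty. The white-vertex count from the coloring of Lemma~\ref{bound max path} reads: at least $3g-2n$ white vertices versus at most $g-\lfloor n/2\rfloor-2$ cycle vertices without a common card, and $3g-2n>g-\lfloor n/2\rfloor-2$ only when $g>\frac{3n}{4}-1$. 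In the range $\lfloor n/2\rfloor+2\le g\le \frac{3n}{4}$ the proposal produces no white vertex at all, and this is precisely why the paper's Theorem~\ref{thm girth} is stated with $\frac{2n}{3}+1$ cards rather than $\lfloor n/2\rfloor+2$. The machinery you propose to import from Section~\ref{proof Thm} to cover this range does not transfer: Lemmas~\ref{card from leaf} and~\ref{min leaves} and the refined coloring charge only vertices of degree $\ge 2$ in branches, and they extract the bound $\Lf(G)\ge\frac n4-1$ from the identity $e(G)=e(T)+1=n$, i.e.\ from the fact that one side is a tree. Here both $G$ and $H$ may have up to $n+1$ edges (cycle rank $2$ is consistent with girth $\approx\frac{2n}{3}$, as you note), a single pendant path of length $\approx n/4$ attached to $Z$ is not excluded, and then every vertex of $Z$ can be red. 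Likewise $e(G)=e(H)$ is asserted but not obtained: the paper's Lemma~\ref{same number of edges} needs $\frac{2n}{3}+4$ degree-$2$ cards, whereas the analogous count here yields only $(\lfloor n/2\rfloor+2)-(n-g)=O(1)$ of them when $g\approx n/2$. Until the regime $\lfloor n/2\rfloor+2\le g\le\frac{3n}{4}$ is handled by a genuinely new idea, the proposal establishes nothing beyond what Theorem~\ref{thm girth} already gives, and the conjecture remains open.
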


\begin{Con}
    For $n$ large enough, the number of common cards between a bipartite graph $G$ and a non-bipartite graph $H$ that both have $n$ vertices is at most $\lfloor \frac{n}{2}\rfloor+1$.
\end{Con}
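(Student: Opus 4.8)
The plan is to lift the machinery developed for Theorem~\ref{main thm trees} to the bipartite setting. Assume for contradiction that for arbitrarily large $n$ there are a bipartite graph $G$ and a non-bipartite graph $H$ on $n$ vertices with at least $\lfloor n/2\rfloor+2$ common cards. Since every common card is a subgraph of the bipartite graph $G$, every common card is bipartite, so every vertex $w$ of $H$ corresponding to a common card lies in \emph{every} odd cycle of $H$. Write $I$ for the set of vertices lying in all odd cycles of $H$; then $|I|\ge\lfloor n/2\rfloor+2$. Two odd cycles in distinct blocks of $H$ meet in at most one vertex, so $H$ has a unique block $B$ containing all of its odd cycles, $I\subseteq V(B)$, and $H-v$ is bipartite for every $v\in I$. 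Fixing a shortest odd cycle $L$ of $H$ gives $I\subseteq V(L)$, hence $L\ge\lfloor n/2\rfloor+2$: there is a long ``hidden'' odd cycle, playing exactly the role of the cycle of the unicyclic graph in the earlier proof.

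First I would dispose of the case that $G$ is a forest: then $H$, being non-bipartite, is not a forest, and Theorem~\ref{main forest} already gives at most $\lfloor n/2\rfloor+1$ common cards. So assume $G$ has a cycle; being bipartite it then has a shortest even cycle, of length $\ell_G\ge 4$. The core of the argument is a structural dichotomy for $H$ around $L$, mirroring Lemma~\ref{min leaves}: running counting arguments over the common cards in the style of the ``First observations'' subsection, one shows that the common cards force many low-degree vertices, and this forces the block $B$ to be ``essentially'' the cycle $L$ — it carries only boundedly many extra ears, each of bounded length, all of its even cycles have bounded length, and outside an $o(n)$-set everything hanging off $L$ consists of small branches with boundedly many vertices of degree $\ge 3$.

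Given that structure, the remaining steps are faithful copies of the proof of Theorem~\ref{main thm trees}. Using the red/white colouring of Lemma~\ref{bound max path} (red $=$ on $L$ and close to a heavy ear or branch) one finds a common card $G-v=H-w$ with $w\in I$ white, on which the longest path has length $L-1+O(1)$; hence the longest path of $G$ is $L+O(1)$. Using the second colouring of Lemma~\ref{bound cycle length} one shows that the vertices in the ``middle'' of that longest path cannot correspond to common cards, so almost all of the at most $\lceil n/2\rceil-2$ non-represented vertices of $H$ sit there, forcing $L\le n/2+O(1)$ and hence that only $O(1)$ vertices of $L$ fail to correspond to a common card. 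Finally, as in the proof of Theorem~\ref{main thm trees}, I would mark the ``green'' vertices of $H$ on $L$ (those maximising the size of the attached component), show that only $O(1)$ of them can correspond to common cards, and then compare: on the cards of $H$ the green vertices slide freely around $L$, whereas on the cards of $G$ — which, being bipartite with a long longest path and a long even cycle $K$ that must coincide with $L$ in the limit — they are pinned to within $O(1)$ of fixed positions; the resulting mismatch in the minimal distance from a card-vertex to a green vertex (as in the last paragraph of that proof) is the contradiction.

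The main obstacle is the structural step for $H$, together with the parallel one it forces for $G$. In the tree/unicyclic case the relations $e(H)=e(G)+1$ and ``$H$ unicyclic'' instantly yield Lemma~\ref{min leaves} and Corollary~\ref{path branch}; here there is no free edge-count identity, $H$ may carry extra even ears and further (even) cycles as long as all of its odd cycles overlap in $I$, and $G$ itself may contain even cycles. Showing that a large common intersection of odd cycles nevertheless forces $H$ to be a long odd cycle plus genuinely bounded extra structure — and simultaneously that a bipartite $G$ sharing those cards must be a long even cycle (or path) plus bounded extra structure, with the two long cycles forced to match — is the technical heart; everything downstream is an adaptation of the colourings already in the paper.
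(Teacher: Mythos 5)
You should note first that this statement is not a theorem of the paper at all: it is posed as an open conjecture in the Discussion, and the paper itself only proves the much weaker bound of Theorem~\ref{thm bipartite} (any $\frac{5n}{6}+2$ cards suffice). So there is no proof in the paper to match your sketch against, and your proposal would need to stand on its own. It does not, because the step you yourself flag as ``the technical heart'' is exactly where the paper's machinery quantitatively breaks down at the $\lfloor n/2\rfloor$ threshold, and you give no replacement for it. In the tree/unicyclic case the whole counting engine starts from the identity $e(G)=e(T)+1$, which forces $\deg(w)=\deg(v)+1$ on every common card and yields the linear leaf bound of Lemma~\ref{min leaves}; that is what makes the red/white colourings of Lemmas~\ref{bound max path} and~\ref{bound cycle length} produce a white card-vertex. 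In the bipartite setting there is no a priori edge identity: the paper only obtains $e(G)=e(H)$ (Lemma~\ref{same number of edges}) \emph{after} Lemma~\ref{lotsdeg2} supplies linearly many degree-$2$ card vertices, and that lemma's count is $(\text{number of common cards})-(n-L)$, which is linear only because $L\ge\frac{5n}{6}+2$. With only $\lfloor n/2\rfloor+2$ common cards and hence only $L\ge\lfloor n/2\rfloor+2$, the same count gives a constant ($\approx 4$), so your claim that ``counting arguments in the style of the First observations subsection'' force the block $B$ to be essentially $L$ with boundedly many bounded ears has no support; the argument is circular (the structure you need is what the counting was supposed to deliver).

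A second concrete failure point: the paper's control of how small cycles and branches attach to the long cycles (Lemma~\ref{distance}, Corollary~\ref{interval}, and the $5k$-red-vertices bound in the proof of Theorem~\ref{thm bipartite}) depends on a large gap between ``small'' and ``large'' cycles, e.g.\ the inequality $2C\le\frac{2n}{3}-8<K$. At the $n/2$ threshold a hidden even cycle of $G$ or $H$ can have length comparable to $K$ and $L$ themselves (any cycle longer than about $\lfloor n/2\rfloor+1$ can be invisible on your subdeck), so attachments are no longer pinned to short intervals and the colouring bookkeeping fails. Likewise, the final ``green vertex'' argument of Theorem~\ref{main thm trees} uses that $T$ is a tree (unique paths, monotone behaviour of the largest component along the blue path); your assertion that a bipartite $G$ sharing these cards ``must be a long even cycle plus bounded extra structure, with the two long cycles forced to match'' is precisely the conjecture's content restated, not derived. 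As it stands the proposal is a plausible programme, but it leaves the conjecture as open as the paper does.
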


Moreover, it would be very interesting to find all graphs (for $n$ large enough) for which these bounds are tight. For example, Bowler, Brown, Fenner, and Myrvold \cite{bowler2011recognizing} characterized all pairs of graphs $(G, H)$ where $G$ is connected and $H$ is disconnected that have $\lfloor \frac{n}{2} \rfloor+1$ common cards. In \cite{Masterscriptie} there is a proof that, for $n$ large enough, all pairs of graphs $(G, H)$ where $G$ is a forest and $H$ is not a forest that have exactly $\lfloor \frac{n}{2}\rfloor+1$ common cards, are exactly theF ones that fall in one of the three the families of Observation \ref{three families}. Therefore, we pose the following question:

\begin{Q}
    What are all infinite families of pairs of graphs $G$ and $H$ both on $n$ vertices, such that $G$ is bipartite and $H$ not, that have (at least) $\lfloor \frac{n}{2}\rfloor+1$ common cards?
\end{Q}

\begin{acknowledgement} \normalfont
The article is based on results from my master's thesis \cite{Masterscriptie} under the supervision of Carla Groenland. I would like to thank her for our useful discussions and her mentorship. Moreover, her valuable suggestions heavily improved the outline and structure of this article.
\end{acknowledgement}

\begin{open} For the purpose of open access, a CC BY public copyright license is applied to any Author Accepted Manuscript (AAM) arising from this submission.
\end{open}

\bibliographystyle{plain}
\bibliography{references}

\begin{thebibliography}{10}

\bibitem{bowler2010families}
Andrew Bowler, Paul Brown, and Trevor Fenner.
\newblock Families of pairs of graphs with a large number of common cards.
\newblock {\em Journal of Graph Theory}, 63(2):146--163, 2010.

\bibitem{bowler2011recognizing}
Andrew Bowler, Paul Brown, Trevor Fenner, and Wendy Myrvold.
\newblock Recognizing connectedness from vertex-deleted subgraphs.
\newblock {\em Journal of Graph Theory}, 67(4):285--299, 2011.

\bibitem{brown2008maximum}
Paul Brown.
\newblock {\em On the maximum number of common cards between various classes of
  graphs}.
\newblock PhD thesis, University of London, 2008.

\bibitem{groenland2021size}
Carla Groenland, Hannah Guggiari, and Alex Scott.
\newblock Size reconstructibility of graphs.
\newblock {\em Journal of Graph Theory}, 96(2):326--337, 2021.

\bibitem{kelly1942isometric}
Paul Kelly.
\newblock {\em On isometric transformations}.
\newblock PhD thesis, University of Wisconsin, 1942.

\bibitem{kelly1957congruence}
Paul Kelly.
\newblock A congruence theorem for trees.
\newblock {\em Pacific Journal of Mathematics}, 7(1):961--968, 1957.

\bibitem{WendyPhd}
Wendy Myrvold.
\newblock {\em Ally and Adversary Reconstruction Problems}.
\newblock PhD thesis, University of Waterloo, 1988.

\bibitem{myrvold1989ally}
Wendy Myrvold.
\newblock The ally-reconstruction number of a disconnected graph.
\newblock {\em Ars Combinatoria}, 28:123--127, 1989.

\bibitem{myrvold1990allytree}
Wendy Myrvold.
\newblock The ally-reconstruction number of a tree with five or more vertices
  is three.
\newblock {\em Journal of Graph Theory}, 14(2):149--166, 1990.

\bibitem{tutte1979all}
William Tutte.
\newblock {\em All the Kings Horses}.
\newblock Academic Press, New York, 1979.

\bibitem{ulam1960collection}
Stanislaw Ulam.
\newblock {\em A collection of mathematical problems}.
\newblock New York: Interscience Publishers, 1960.

\bibitem{Masterscriptie}
Gabriëlle Zwaneveld.
\newblock {\em Recognizing cycles from partial decks}.
\newblock Master's thesis, Utrecht University, 2023.
\newblock Available at
  \url{https://studenttheses.uu.nl/handle/20.500.12932/44253}.

\end{thebibliography}

\appendix
\section{Deferred proofs}\label{appendix: forest}
This section of the appendix contains some deferred results and proofs that were earlier.

\subsection{Components of cards of connected graphs}

\begin{Lemma}\label{component H in connected} Let $H,G$ be connected graphs such that $v(H) < \frac{v(G)}{2}$. Then at most $\left\lfloor\frac{v(G)}{v(H)+1} \right\rfloor$ cards of $G$ contain a connected component isomorphic $H$.
\end{Lemma}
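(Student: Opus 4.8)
The plan is to argue by considering the vertices of $G$ whose removal leaves a copy of $H$ as a connected component, and to show these vertices form a structurally constrained set. Suppose $v \in V(G)$ is such that $G - v$ has a connected component $H_v \cong H$. Since $G$ is connected but $G - v$ is not (because $H_v$ is a proper component of $G-v$, as $v(H) < v(G)/2 < v(G) - 1$), the vertex $v$ must be adjacent to some vertex of $H_v$ and also to some vertex outside $H_v \cup \{v\}$. Crucially, $H_v$ together with $v$ forms a set $A_v := V(H_v) \cup \{v\}$ of exactly $v(H) + 1$ vertices, and the only edges leaving $V(H_v)$ in $G$ go to $v$ (otherwise $H_v$ would not be a whole component of $G-v$).

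The key step is to show that for two distinct such vertices $v \neq v'$, the vertex sets $V(H_v)$ and $V(H_{v'})$ are disjoint. First I would observe that $v' \notin V(H_v)$: if $v'$ were a vertex of $H_v$, then since the only edges from $V(H_v)$ to the rest of $G$ go through $v$, removing $v'$ would disconnect $H_v - v'$ from the rest of $G$ only if... — more carefully, in $G - v'$ the component containing the bulk of $V(H_v) \setminus \{v'\}$ would have at most $v(H)$ vertices and would be joined to the rest of $G$ only via $v$; for $G-v'$ to have a component isomorphic to $H$ with $v(H)$ vertices, that component would have to be exactly $V(H_v)\setminus\{v'\}$ plus possibly more, but it connects out through $v$, contradiction unless $v \in V(H_{v'})$, and then one chases the symmetric situation to a size contradiction using $v(H) < v(G)/2$. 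The cleanest route: if $V(H_v) \cap V(H_{v'}) \neq \emptyset$, then because each $H_{\bullet}$ is a connected component of the respective card, one shows $V(H_v) \cup \{v\} = V(H_{v'}) \cup \{v'\}$ as sets — both equal the union of that overlapping piece with its unique "exit" vertex — which forces $\{v\} \cup V(H_v) = \{v'\} \cup V(H_{v'})$, and since $v \ne v'$ we'd need $v \in V(H_{v'})$ and $v' \in V(H_v)$; but then $G[V(H_v)]$ contains the edge-cut vertex $v'$ internally while all exits go through $v$, and counting shows $G$ restricted to these $v(H)+1$ vertices is connected with only one edge to the outside, so $G$ minus that single bridge edge would have a component of size $\le v(H)+1 < v(G)/2 + 1$, which is fine — so actually I would instead directly prove disjointness of the sets $A_v = V(H_v)\cup\{v\}$ up to the shared "hub" being impossible, using that $v(H)+1 \le v(G)/2$ so two such sets of size $v(H)+1$ cannot overlap in a way consistent with $G$ being connected and each $H_\bullet$ being a full component.

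Once disjointness of the sets $A_v$ (each of size $v(H)+1$) is established — or at least that distinct qualifying vertices yield disjoint $V(H_v)$, with each such set together with its hub accounting for $v(H)+1$ distinct vertices — the count follows immediately: if there are $m$ cards of $G$ containing a component isomorphic to $H$, the corresponding sets partition (part of) $V(G)$ into $m$ disjoint blocks each of size at least $v(H)+1$ (here I use that $v$ itself is not inside any $V(H_{v'})$, so $v$ contributes a genuinely new vertex), hence $m(v(H)+1) \le v(G)$, giving $m \le \lfloor v(G)/(v(H)+1) \rfloor$.

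The main obstacle I anticipate is precisely the disjointness argument of the middle paragraph: ruling out the configuration where $v' \in V(H_v)$ and $v \in V(H_{v'})$ simultaneously. Here the hypothesis $v(H) < v(G)/2$ must be doing real work — it guarantees $|A_v| + |A_{v'}| > v(G)$ would be violated only if they overlap substantially, and I expect the resolution is that the overlap plus the connectivity constraint (each $H_\bullet$ has exactly one neighbor outside itself, namely its hub) forces $A_v = A_{v'}$, which then contradicts $v \ne v'$ being distinct \emph{hubs} since a set of $v(H)+1$ vertices with a single external edge has its hub determined (it is the unique endpoint of that edge inside the set) — actually the hub need not be unique if there are two bridges, so the careful case analysis of how many edges leave the overlapping region is where the genuine bookkeeping lives. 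I would handle it by a short case split on $|V(H_v) \cap V(H_{v'})|$ and on which of $v, v'$ lies in which set, each case closing by a vertex count against $v(G)$ or an edge/connectivity contradiction.
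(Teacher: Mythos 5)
Your overall strategy is the same as the paper's: associate to each qualifying vertex $v$ the block $A_v = V(H_v)\cup\{v\}$ of size $v(H)+1$, prove the blocks are pairwise disjoint, and conclude $m(v(H)+1)\le v(G)$. The counting endgame and the observation that all edges leaving $V(H_v)$ go to $v$ are correct. However, the disjointness step — which you yourself identify as the obstacle and defer to an unspecified case split — is a genuine gap as written, and the sketch you give for it (mutual containment $v'\in V(H_v)$, $v\in V(H_{v'})$, case analysis on $|V(H_v)\cap V(H_{v'})|$ and on bridges out of the overlap) heads down a path that is both unresolved and unnecessary.

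The clean resolution is in two short steps, and this is where $v(H)<\frac{v(G)}{2}$ does its work. \emph{Step 1: no $w\in V(H_v)$ is itself a qualifying vertex.} Since $H_v$ is a component of $G-v$ and every other component of $G-v$ attaches to $v$ (as $G$ is connected), the set $V(G)\setminus V(H_v)$ induces a connected subgraph. Hence in $G-w$ the component containing $v$ has at least $v(G)-v(H)>v(H)$ vertices, while every other component lies inside $V(H_v)\setminus\{w\}$ and has at most $v(H)-1$ vertices; so $G-w$ contains no component isomorphic to $H$ at all. In particular the configuration $v'\in V(H_v)$ that worries you simply cannot occur — no case split is needed. \emph{Step 2: for a second qualifying vertex $u$} (which by Step 1 satisfies $u\notin V(H_v)$, and $u\ne v$), the set $V(H_v)\cup\{v\}$ survives intact in $G-u$ and induces a connected subgraph there with $v(H)+1>v(H)$ vertices; so every vertex of $A_v$ lies in a component of $G-u$ that is too large to be $H_u$, giving $A_v\cap V(H_u)=\emptyset$ and hence $A_v\cap A_u=\emptyset$. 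With these two steps supplied, your count goes through and the proof is complete.
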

\begin{proof} If there is no card of $G$ that contains a connected component isomorphic to $H$ we are immediately done. So let $v$ be a vertex such that $H$ is a connected component of $G-v$. We consider the set $V(H_{G})$ of vertices in $G$ that lie in this component.  As $G$ is connected, there is a vertex $w_1 \in V(H_G)$ and $x \in G-v-V(H_G)$ such the edges $(w_1,v)$ and $(v,x)$ exist.

Given $w_i \in V(H_G)$, we will show that the graph $G-w_i$ does not contain a connected component $H$. The graph $G\slash V(H_G)$ is connected as $G$ is connected and $H$ is a connected component in $G-v$. This means that the component in $G-w_i$ containing $v$ has at least $v(G)-v(H_G) > v(H_G)$ vertices and is thus not isomorphic to $H$. All other  connected components are a subset of $V(H_G)-w_i$ which has strictly fewer vertices than $H$. Thus these components are also not isomorphic to $H$. This means that for all vertices $w_i \in V(H_G)$, the graph $G-w_i$ does not contain a connected component isomorphic to $H$.

Let $u \neq v$ and $G-u$ be another card with a connected component $H'$ which is isomorphic to $H$. By the previous paragraph, we see that $u \neq w_i$. We will show that $V(H) \cap V(H'_G) = \emptyset$. As $H$ is a connected component in $G-v$ and $u \notin V(H_G)$, we find that in $G-u$ the vertex $v$ is in the same connected component as $H$. This means that all vertices of $H$ lie in a connected component with more vertices than $H$, so none of the vertices in $V(H_G)$ are also in $V(H'_G)$.

This means that for every vertex $v$ such that $G-v$ contains a connected component of size $k$, there are uniquely defined $v(H)$ vertices for which $G-w$ does  not contain such a component. If $|C|$ is the number of cards that have $H$ as a connected component, then we see that $|C|\cdot (v(H)+1) \leq v(G)$, which concludes the proof.
\end{proof}

\begin{Remark}
The bounds of Lemma \ref{component H in connected} are sharp. Draw a cycle $C$ with $v(G)- k\left\lfloor\frac{v(G)}{k+1}\right\rfloor$ vertices and add then a copy of H to $\left\lfloor\frac{v(G)}{k+1}\right\rfloor$ vertices in this cycle $C$. Then for each of those $\left\lfloor\frac{v(G)}{k+1}\right\rfloor$ vertices, we have that $G-v$ contains a connected component isomorphic to $H$.
\end{Remark}

In the proof of Lemma \ref{component H in connected}, we only considered the number of vertices of the components of $G-v$ and never whether a connected component of size $k$ was isomorphic to $H$. This  means that we have proved the following even stronger statement.
\begin{Cor}\label{concom}
Let $G$ be a connected graph and let $1 \leq k < \frac{v(G)}{2}$. Then there are at most $\left \lfloor \frac{v(G)}{k+1}\right\rfloor$ vertices $v \in G$ such that $G-v$ contains a connected component of size exactly $k$.
\end{Cor}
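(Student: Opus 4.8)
The plan is to transcribe the proof of Lemma~\ref{component H in connected} almost verbatim, observing that that argument never used the isomorphism type of the component in question, only its cardinality. Call a vertex $v \in V(G)$ \emph{good} if $G-v$ has a connected component on exactly $k$ vertices, and for each good $v$ fix such a component, writing $S_v \subseteq V(G)$ for its vertex set in $G$; thus $|S_v| = k$ and, since $S_v$ is a component of $G-v$, we have $v \notin S_v$. The goal is to show that the sets $S_v \cup \{v\}$, as $v$ ranges over good vertices, are pairwise disjoint of size $k+1$; then the number of good vertices is at most $v(G)/(k+1)$, and being an integer it is at most $\lfloor v(G)/(k+1)\rfloor$, which is exactly the claim.

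The first step is to show that if $v$ is good then no vertex $w \in S_v$ is good. When we delete $w$ from $G$, the vertex $v$ lies in a connected component of $G-w$ that contains all of $V(G)\setminus S_v$: indeed $G$ is connected, $v$ is adjacent to $S_v$, and every component of $G-v$ other than $S_v$ hangs off $v$, so deleting $w\in S_v$ (with $w\neq v$) cannot detach any of them. This component therefore has at least $v(G)-k$ vertices, and $v(G)-k > k$ precisely because $k < v(G)/2$; so it is not a size-$k$ component. Every other component of $G-w$ is contained in $S_v\setminus\{w\}$ and so has fewer than $k$ vertices. Hence $G-w$ has no component of size exactly $k$, i.e.\ $w$ is not good. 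In particular, distinct good vertices $u,v$ satisfy $u\notin S_v$ and $v\notin S_u$.

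The second step is to show $S_u\cap S_v=\emptyset$ for distinct good $u,v$. Suppose $z\in S_u\cap S_v$ and consider $G-u$. The set $S_v\cup\{v\}$ is still connected in $G-u$: the induced subgraph on $S_v$ is connected (it is a component of $G-v$), and the edge joining $v$ to $S_v$ survives because $u\neq v$ and $u\notin S_v$. So $S_v\cup\{v\}$ is a connected set of $k+1$ vertices all lying in the component of $G-u$ containing $z$; but that component is exactly $S_u$, of size $k<k+1$, a contradiction. Therefore the sets $S_v\cup\{v\}$ are pairwise disjoint, and summing their sizes gives $(k+1)\cdot(\#\text{good vertices}) \le v(G)$, which finishes the argument.

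The only real subtlety — and hence the step to watch — is the edge bookkeeping when a second vertex is removed: one must remember that "$S_v$ is a component of $G-v$'' forbids only edges from $S_v$ to $V(G)\setminus(S_v\cup\{v\})$, not edges from $S_v$ to $v$ itself, and then use connectedness of $G$ to lump everything outside $S_v$ into a single large component of $G-w$. Once that is set up correctly, both the non-goodness of interior vertices and the disjointness of the $S_v$ follow exactly as in Lemma~\ref{component H in connected} with $v(H)$ replaced throughout by $k$.
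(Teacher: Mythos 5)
Your proof is correct and is essentially the paper's own argument: the paper justifies Corollary \ref{concom} precisely by observing that the proof of Lemma \ref{component H in connected} never uses the isomorphism type of the component, only its cardinality, which is exactly the transcription you carried out. Your packing of the pairwise disjoint sets $S_v\cup\{v\}$ (via the two steps that interior vertices of $S_v$ are not good and that distinct $S_u$, $S_v$ are disjoint) matches the lemma's argument step for step.
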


\subsection{Upper bound common cards between forests and non-forests}
\forestvscycle*

\begin{proof}
   We will use a proof by contradiction. Suppose that a forest $T$ and a graph $G$ with a cycle,  both on $n$ vertices, have at least $\frac{2n}{3}+1$ common cards. Let $L$ be the length of a cycle of $G$, then $L \geq \frac{2n}{3}+1$ as the cycle is not  visible on any of the common cards. Now, Lemma \ref{max een cycle} tells us that $G$ is unicyclic, so let $L$ be the length of the unique cycle. Then as $G$ is unicyclic, $G$ has at least $L-(n-L) = 2L-n= 2(L-\frac{2n}{3}-1)+\frac{n}{3}+2$ vertices of degree $2$ on its cycle. As at most $L - (\frac{2n}{3}+1)$ vertices on the cycle of $G$ do not correspond to a common card, at least $\frac{n}{3}+2$ common cards do correspond to a vertex of degree 2.   

   Let $K_G$ be the number of connected components in $G$ and $K_T$ the number of connected components of $T$. Then $e(G)= n + 1 - K_G$ and $e(T) = n -K_T$.
   For every common card corresponding to a vertex of degree 2 in the cycle of $G$, the number of connected components is equal to $K_G$.

   Therefore, $K_T \leq K_G+1$. If $K_T=K_G+1$ then $T$ must have at least $\frac{2n}{3}+1$ isolated vertices, as deleting them is the only way to decrease the number of connected components. This indicates that $K_G \geq \frac{2n}{3}+1$. However, as the cycle of $G$ already has $\frac{2n}{3}+1$ vertices, $G$ also has a connected component with at least $\frac{2n}{3}+1$ vertices, which leads to a contradiction. Therefore, $K_T \leq K_G$ and $e(G) > e(T)$. 

   Therefore, for every common card $T-v=G-w$ with $\deg(w)=2$, the fact $e(G)>e(T)$ implies $\deg(v) \leq 1$. This indicates that $T$ has at least $\frac{n}{3}+2$ vertices of degree at most $1$. Then $T-v=G-w$ has at least $\frac{n}{3}+1$ such vertices. As $w$ has degree 2, this indicates that $G$ has at least  $\frac{n}{3}-1$ vertices of degree at most 1 as this number only decreases if a neighbor of $w$ had degree 1 or less. But as $G$ already has $\frac{2n}{3}+1$ vertices on the cycle, which all have degree at least 2, all bounds must be tight. 
   
   In particular, this means that $L=\frac{2n}{3}+1$, and thus every vertex on the cycle of $G$ corresponds to a common card. Moreover, this also tells us that every vertex of degree $2$ in the cycle has two neighbors of degree at most 2. Hence, all vertices in the cycle have we degree 2.  Therefore, $T$ has at least $\frac{2n}{3}+1$ vertices of degree at most 1 and $G$ has $\frac{2n}{3}-2$ vertices of degree at most 1. As $G$ has n vertices, this gives us that $n \geq \frac{2n}{3}-2+\frac{2n}{3}+1$, implying $3 \geq n$.

   On the other hand, we also know that $\frac{2n}{3}+1 \leq n$, so $n=3$. In this case, $G$ is a triangle and all 3 cards of $G$ are connected. However, as all vertices of $T$ have degree at most 1, we see that $T$ is disconnected. So $T$ does not have 3 connected cards, a contradiction.
\end{proof}
\end{document}